\newcommand{\N}{\mathbb{N}}
\newcommand{\R}{\mathbb{R}}
\newcommand{\C}{\mathbb{C}}
\newcommand{\mS}{\mathbb{S}}
\newcommand{\mBlue}[1]{{\color{blue} \ensuremath{\mathbf{#1}}}}
\newcommand{\mRed}[1]{{\color{red} \ensuremath{\mathbf{#1}}}}
\newcommand{\Fref}[1]{\Cref{#1}}
\newcommand{\fref}[1]{\cref{#1}}
\newcommand{\aNorm}[1]{\mid\!\mid\!#1\!\mid\!\mid}
\newcommand{\myHom}[2]{\mathbf{H_{#1}}\!\left[#2\right]}
\newcommand{\myEtoP}[3]{\mathbf{Q}_{#1}^{#2}\!\left[#3\right]}
\newtheorem{prop}{Proposition}[section]
\newtheorem{rem}[prop]{Remark}
\newcommand{\lp}{\left}
\newcommand{\rp}{\right}
\title{Cascades of  Global Bifurcations and Chaos\\ near a  Homoclinic Flip Bifurcation: A
  Case Study}
\author{Andrus Giraldo\footnotemark[2] , Bernd
  Krauskopf\footnotemark[2] , and Hinke M. Osinga\footnotemark[2]}
\begin{document}
\maketitle

\renewcommand{\thefootnote}{\fnsymbol{footnote}}
\footnotetext[2]{Department of Mathematics, The University of
  Auckland, Private Bag 92019, Auckland 1142, New Zealand
  (\href{mailto:a.giraldo@auckland.ac.nz)}{a.giraldo@auckland.ac.nz},
  \href{mailto:b.krauskopf@auckland.ac.nz)}{b.krauskopf@auckland.ac.nz},
  \href{mailto:h.m.osinga@auckland.ac.nz)}{h.m.osinga@auckland.ac.nz})}

\renewcommand{\thefootnote}{\arabic{footnote}}

\begin{abstract}
We study a specific homoclinic bifurcation called a homoclinic flip bifurcation of case~\textbf{C}, where a homoclinic orbit to a saddle equilibrium with real eigenvalues changes from being orientable to nonorientable. This bifurcation is of codimension two, meaning that it can be found as a bifurcation point on a curve of homoclinic bifurcations in a suitable two-parameter plane. In fact, this is the lowest codimension for a homoclinic bifurcation of a real saddle to generates chaotic behavior in the form of (suspended) Smale horseshoes and strange attractors. We present a detailed numerical case study of how global stable and unstable manifolds of the saddle equilibrium and of bifurcating periodic orbits interact close to a homoclinic flip bifurcation of case~\textbf{C}. This is a step forward  in the understanding of the generic cases of homoclinic flip bifurcations, which started with the study of the simpler cases \textbf{A} and \textbf{B}. In a three-dimensional vector field due to Sandstede, we compute relevant bifurcation curves in the two-parameter bifurcation diagram near the central codimension-two bifurcation in unprecedented detail. We present representative images of invariant manifolds, computed with a boundary value problem setup, both in phase space and as intersection sets with a suitable sphere. In this way, we are able to identify infinitely many cascades of homoclinic bifurcations that accumulate on specific codimension-one heteroclinic bifurcations between an equilibrium and various saddle periodic orbits. Our computations confirm what is known from theory but also show the existence of  bifurcation phenomena that were not considered before. Specifically, we identify the boundaries of the Smale--horseshoe region in the parameter plane, one of which creates  a strange attractor that resembles the R\"{o}ssler attractor. The computation of a winding number reveals a complicated overall bifurcation structure in the wider parameter plane that involves infinitely many further homoclinic flip bifurcations associated with so-called homoclinic bubbles.
\end{abstract}


\section{Introduction}
\label{sec:Intro}

Homoclinic bifurcations lie at the heart of complicated dynamics in smooth vector fields.  Apart from being interesting objects of study in their own right, homoclinic bifurcations appear in many applications, such as mathematical biology \cite{Kuz2,Lina1}, laser physics \cite{Wie1,Wie2} and electronic engineering \cite{Kopper1}; more on their relevance for applications can be found, for example, in \cite{Guck1}.  Indeed, the study of homoclinic bifurcations is important as a first step in the pursuit of understanding complex behavior that arises in mathematical models of different physical phenomena.  Since the work of Shilnikov \cite{Shil5} on homoclinic bifurcations to a saddle focus (with complex eigenvalues), which is now called a \emph{Shilnikov bifurcation}, a lot of attention has focused on the consequences of the existence of homoclinic bifurcations. Famously, the Shilnikov bifurcation, subject to certain eigenvalue conditions, is the only homoclinic bifurcation of codimension one that generates chaotic behavior in the form of (suspended) Smale--horseshoe dynamics; see \cite{san3} for a comprehensive review. This contrasts the situation of a codimension-one homoclinic bifurcation of a real saddle (with only real eigenvalues), which does not generate any nearby chaotic behavior. However, near the codimension-two homoclinic bifurcation of the real saddle, known as the \emph{homoclinic flip} bifurcation, (suspended) Smale--horseshoes \cite{Hom1} and strange attractors \cite{Nau1,Nau2} arise when certain eigenvalue and geometric conditions are satisfied. More generally, homoclinic flip bifurcations can be of different cases (discussed below), and these cases act as organizing centers for the creation of periodic orbits and multi-pulse solutions \cite{Hom1}. Specifically, the existence of homoclinic flip bifurcations explains the creation of spiking behavior in mathematical models of neurons such as the Hindmarsh--Rose system \cite{Lina1}.  

This paper is about the homoclinic flip bifurcation of case~\textbf{C}, which is the most complicated case that features chaotic dynamics.  We consider here the defining situation of a three-dimensional vector field with a real saddle equilibrium. Without loss of generality, we assume that this equilibrium has two different real stable eigenvalues, and one unstable eigenvalue. Generically, at the moment of a codimension-one homoclinic bifurcation of a real saddle equilibrium, the two-dimensional stable manifold forms either a topological cylinder or a M\"obius band as it is followed locally around the homoclinic orbit. The homoclinic flip bifurcation is characterized as the moment when this stable manifold changes from orientable to nonorientable \cite{san3,kis1,Shil2,Yan1}.  A homoclinic flip bifurcation has different unfoldings depending on the eigenvalues of the saddle equilibrium.  In general, three generic cases have been identified, denoted \textbf{A}, \textbf{B} and \textbf{C} \cite{san3}.  In case \textbf{A} a single attracting (or repelling) periodic orbit is created. The unfolding of case \textbf{B} involves saddle periodic orbits, and its unfolding features the main branch of homoclinic bifurcation, as well as only a single curve of saddle-node bifurcation,  a single curve of period-doubling bifurcation and a single curve of homoclinic bifurcation of a period-doubled orbit \cite{And1,san3,kis1}. Finally, the unfolding of case \textbf{C} gives rise to period-doubling cascades, $n$-homoclinic orbits, for any $n \in \N$, a region of Smale--horseshoe dynamics \cite{Deng1,Hom1,san4}, and strange attractors \cite{Nau1,Nau2}. Importantly, case \textbf{C} constitutes the homoclinic bifurcation of a real saddle with the lowest codimension that generates chaotic dynamics.  The unfoldings for the three cases have been studied theoretically with different techniques including return maps \cite{Deng1, Hom1}, Shilnikov variables \cite{kis1} and Lin's method \cite{san4}. The unfoldings for both cases $\mathbf{A}$ and $\mathbf{B}$ have been determined and proven for any vector field of dimension $n \geq 3$. The exact nature of the unfolding of case \textbf{C} is not as well understood as that for cases \textbf{A} and \textbf{B}; in particular, the complete unfolding is not known because infinitely many saddle periodic orbits are created and the interactions of their respective stable and unstable manifolds give rise to many other global bifurcations. This complexity is associated with the existence of Smale--horseshoes dynamics, which were conjectured to be part of the unfolding of a homoclinic flip bifurcation in \cite{Deng1} and subsequently proven in \cite{Hom1} to exist under specific eigenvalue and geometric conditions that define case~\textbf{C}. More specifically, cascades of period-doubling and homoclinic bifurcations are featured in the unfolding to explain the annihilation process of the infinitely many saddle periodic orbits that exist in the Smale--horseshoe region \cite{Hom1}. However, the literature does not yet provide an understanding of the boundaries of the Smale--horseshoe region, the nature of additional bifurcations due to interactions of manifolds, or the implications of such interactions for the reorganization of basin of attraction in phase space.  Moreover, all results concerning the dynamics and unfolding of case \textbf{C} have only been proven for three-dimensional vector fields. To conclude this discussion, in contrast to the other two cases, the understanding of case~\textbf{C} is not as complete.

The existing theoretical results concerning the unfoldings of the homoclinic flip bifurcation of case~\textbf{C} hold in a tubular neighborhood of the homoclinic orbit; they were obtained with well-known techniques by studying the family of local diffeomorphism defined on a Poincar\'e section.  In this paper we adopt a complementary point of view. Thanks to advances in two-point boundary value problem (2PBVP) continuation techniques \cite{Doe5}, we compute relevant invariant manifolds \cite{Kra2} and their interactions in a three-dimensional vector field introduced by Sandstede \cite{san1}, which exhibits a homoclinic flip bifurcation of case~\textbf{C}. In this way, we are able to present the overall dynamics near this bifurcation, beyond considering intersection sets on a Poincar\'e section.  More specifically, our focus is on the role of global invariant manifolds of saddle equilibria and saddle periodic orbits in the organization of the three-dimensional phase space. These objects and surfaces rearrange themselves in sequences of global bifurcations that arise from the codimension-two homoclinic flip bifurcation point of case~\textbf{C}. Our illustrations go well beyond the sketches of manifolds in earlier works (see, for example,~\cite{Deng1,Hom2,san3}) in that we show these objects beyond a tubular neighborhood of the homoclinic orbit. Furthermore, to enhance the illustrated geometric properties of these objects, all our illustrations of the three-dimensional phase space are accompanied by animations. By studying the interactions of these manifolds in phase space, we find additional bifurcation phenomena in the specific case study of Sandstede's model, which have been conjectured or not even considered before as part of the known literature of homoclinic flip bifurcation. In this way, our results contribute to the goal of finding the unfolding of the homoclinic flip bifurcation of case~\textbf{C}, by presenting a set of relevant bifurcation phenomena that are involved.

As in previous work for cases~\textbf{A} \cite{Agu1} and \textbf{B} \cite{And1}, we consider the three-dimensional vector field
\begin{equation}
X^s(x,y,z):
\begin{cases} 
\dot x = P^1(x,y,z) := ax + by -ax^2+(\tilde{\mu}-\alpha z)x(2-3x)+\delta z, \\
\dot y = P^2(x,y,z) := bx +ay -\frac32 bx^2-\frac32 axy-2y(\tilde{\mu}-\alpha
z)-\delta z, \\
\dot z = P^3(x,y,z) := cz +\mu x +\gamma xz +\alpha \beta  (x^2(1-x)-y^2),
\end{cases}
\label{eq:san}
\end{equation} 
which was introduced by Sandstede in \cite{san1} as a model containing homoclinic flip bifurcations of all three different cases. We choose parameters as discussed in \cref{sec:san}, such that the equilibrium located at the origin $\textbf{0} \in \R^3$ is a saddle with two different real negative (stable) and one positive (unstable) eigenvalues $\lambda^{ss}<\lambda^s<0<\lambda^u$. Then the origin has a two-dimensional stable manifold $W^s(\textbf{0})$ and a one-dimensional unstable manifold $W^u(\textbf{0})$, which consist of the points in phase space that converge to $\textbf{0}$ forward and backward in time, respectively.  In particular, system~\cref{eq:san} is at a homoclinic bifurcation when $W^s(\mathbf{0})\cap W^u(\mathbf{0}) \neq \emptyset$, that is, at the moment when there exists a trajectory that converges both to $\mathbf{0}$ forward and backward in time. System~\cref{eq:san} exhibits a homoclinic bifurcation for $\mu=0$ \cite{san1}, which is a homoclinic flip bifurcation of case~\textbf{C} when $\alpha \approx 0.3694818$. We denote this point by $\mathbf{C_I}$ throughout this paper, and we study the bifurcation diagram of system~\cref{eq:san} in the $(\alpha,\mu)$-plane near $\mathbf{C_I}$.

\subsection{The codimension-two homoclinic flip orbit}

\begin{figure}
\centering
\includegraphics{./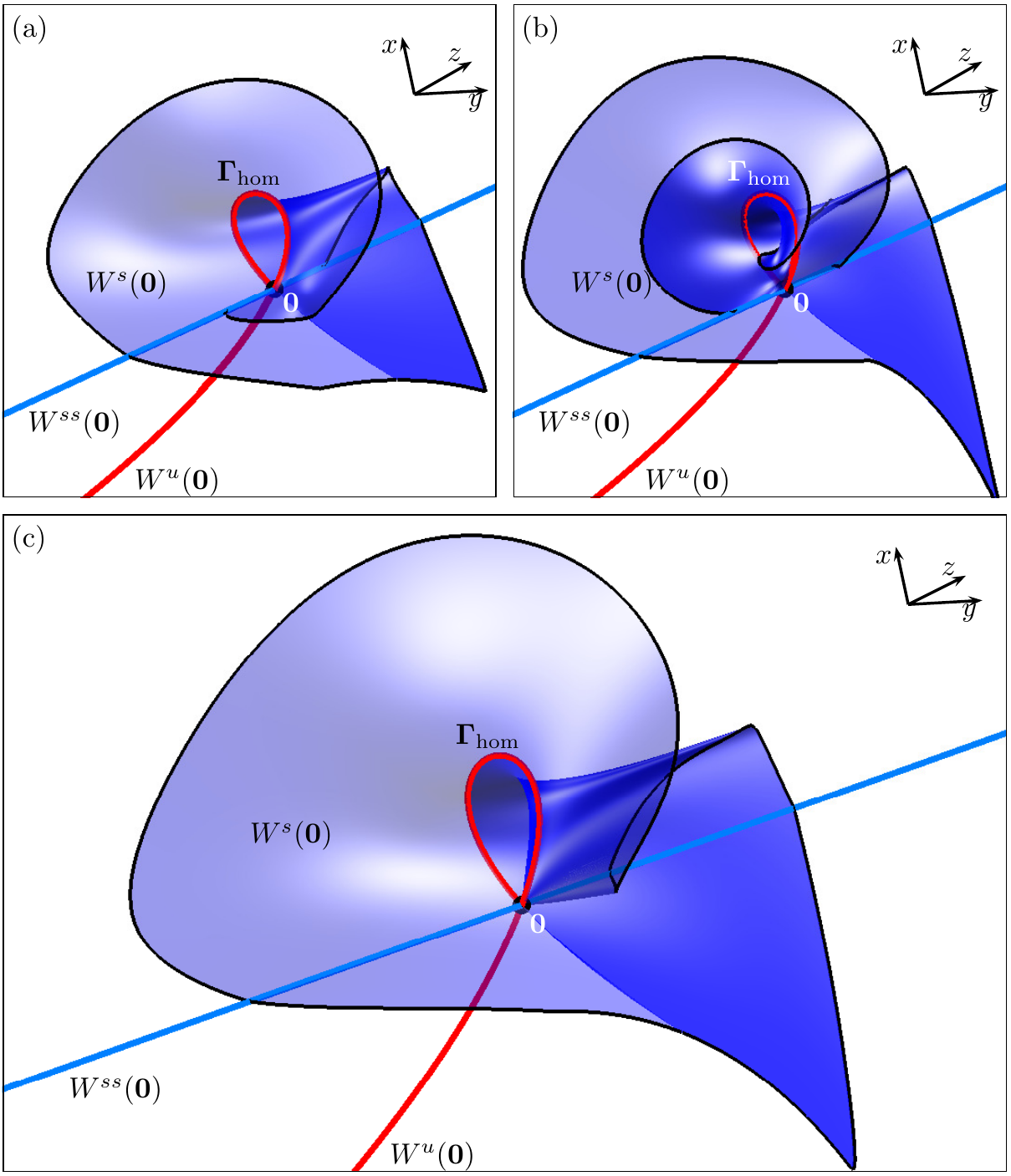}
\caption{The stable manifold $W^{s}(\mathbf{0})$ of system \cref{eq:san} in $\R^3$ at a codimension-one orientable and nonorientable homoclinic bifurcation in panels~(a) and (b), respectively; panel (c) shows $W^{s}(\mathbf{0})$ at a codimension-two inclination flip bifurcation.  Here, $W^{s}(\mathbf{0})$ is rendered transparent with one half colored dark blue and the other half light blue. The unstable manifold $W^u(\mathbf{0})$ is colored red and the homoclinic orbit is labeled $\Gamma_{\rm hom}$. The one-dimensional strong stable manifold $W^{ss}(\mathbf{0})$ is highlighted by a blue curve. The common parameter values for all three panels are $(a,b,c,\beta,\gamma,\mu,\tilde{\mu})= (0.7,1,-2,1,2,0,0)$; furthermore, $\alpha=0.2$ in panel (a), $\alpha=0.5$ in panel~(b) and $\alpha \approx 0.3694818$ in panel~(c).  See also the accompanying animation ({\color{red} GKO\_Cflip\_animatedFig1.gif}).} \label{fig:First}
\end{figure}

\Fref{fig:First} shows phase portraits of system~\cref{eq:san} at three different points along the homoclinic bifurcation curve given by $\mu=0$ in the $(\alpha,\mu)$-plane. The individual panels show the homoclinic orbit together with the associated stable and unstable manifolds of $\textbf{0}$, where the stable manifold $W^s(\mathbf{0})$ is rendered in two shades of blue to illustrate its orientability; here, every trajectory that forms $W^s(\mathbf{0})$ has arclength four. The homoclinic orbit $\mathbf{\Gamma_{\rm hom}}$ is the branch of $W^u(\mathbf{0})$ (red curves) that lies in $W^s(\mathbf{0})$. The homoclinic bifurcations in panels~(a) and (b) in \fref{fig:First} are of codimension one, because they fullfil the following genericity conditions \cite{Hom1,kis1}:
\begin{itemize}
\item[(\textbf{G1})] 
(Non-resonance) $|\lambda^s| \not = \lambda^u$;
\item[(\textbf{G2})] 
(Principal homoclinic orbit) In positive time the homoclinic trajectory approaches the equilibrium tangent to the eigenvector $e^s$ associated with $\lambda^s$;
\item[(\textbf{G3})] 
(Strong inclination) The tangent space ${\rm{T}}W^s(\mathbf{0})$ of the stable manifold $W^s(\mathbf{0})$, when followed along $\mathbf{\Gamma_{\rm hom}}$ backward in time, converges to the plane spanned by the eigenvectors associated with $\lambda^{ss}$ and $\lambda^{u}$.
\end{itemize}
These three properties ensure that a portion of $W^s(\mathbf{0})$ close to $\mathbf{\Gamma}_{\rm hom}$ folds over and closes along a trajectory of $W^s(\mathbf{0})$ tangent to the strongest stable eigenvector of $\mathbf{0}$, that is, along the strong stable manifold $W^{ss}(\mathbf{0})$ (blue curve); hence, locally near $\mathbf{\Gamma_{\rm hom}}$, the stable manifold $W^s(\mathbf{0})$ is topologically equivalent to a cylinder, in \cref{fig:First}~(a), or a M\"obius band, in \cref{fig:First}~(b); see \cite{Agu1} for more details. Recall that, for $\mu=0$, system~\cref{eq:san} always exhibits a homoclinic bifurcation; hence, it seems that one can transition between panels~(a) and (b) by varying $\alpha$ continuously. However, a cylinder and a M\"obius band are not homeomorphic surfaces, so there must exist a transition point, where $W^s(\mathbf{0})$ does not close in either of the two ways depicted in panels~(a) and (b). This transition case is shown in panel~(c) of \cref{fig:First} for $\alpha \approx 0.3694818$, where a homoclinic flip bifurcation takes place, which is called an \emph{inclination flip} (\textbf{IF}). At this bifurcation point genericity condition \textbf{(G3)} is not fulfilled and $W^s(\mathbf{0})$ does not close along $W^{ss}(\mathbf{0})$; see \cref{fig:First}~(c). It was proven in \cite{Deng1,Hom1,kis1} that the inclination flip bifurcation is one mechanism by which one can transition between the orientable and nonorientable case of a homoclinic bifurcation of codimension one. It is not the only known mechanism: a transition in orientability of the homoclinic bifurcation may also occur via a so called \emph{orbit flip} bifurcation, which corresponds to a homoclinic bifurcation that does not fulfill condition \textbf{(G2)}. Both inclination and orbit flip are codimension-two homoclinic flip bifurcations whose unfolding can be of cases~\textbf{A}, \textbf{B} and \textbf{C}. 

For the purpose of this paper, we only consider the inclination flip bifurcation of case \textbf{C}, which occurs if one of the following eigenvalue conditions are satisfied \cite{san3}: 
\begin{equation}\label{eq:cond}
|\lambda^{ss}| < \lambda^u \quad \text{or} \quad 2|\lambda^{s}| < \lambda^u,
\end{equation}
and the following geometric conditions are met:
\begin{enumerate}[(i)]
\item 
$|\lambda^s| \neq |\lambda^{ss}|/2$
\item 
If $|\lambda^s| < |\lambda^{ss}|/2$, the homoclinic orbit converges to $\mathbf{0}$ tangent to $e^s$ in a typical way. Geometrically, this means that the homoclinic orbit is not contained in the one-dimensional leading (weak) stable manifold tangent to $\mathbf{0}$, which exists uniquely as a smooth manifold under these eigenvalue conditions; for further details see \cite{san3}.
\item 
If $|\lambda^s| > |\lambda^{ss}|/2$, the invariant manifold of the homoclinic orbit, which is tangent to the span$\{e^{s},e^u\}$ backward in time, has a nondegenerate quadratic tangency with $W^s(\mathbf{0})$ along the homoclinic orbit.
\end{enumerate}

\subsection{Bifurcation diagram of case~\textbf{C}}

The literature distinguishes between two different unfoldings of a homoclinic flip bifurcation of case~\textbf{C} \cite{Hom1}. These are given by global conditions regarding the geometry of the stable manifold $W^s(\mathbf{0})$ during the creation of the Smale--horseshoe, and they are called an outward twist $\mathbf{C_{\rm out}}$ and an inward twist $\mathbf{C_{\rm in}}$ in the literature \cite{Deng1, Hom1}. Both twist cases contain the same codimension-one bifurcation curves but their relative positions differ. In particular, we find that the homoclinic flip bifurcation point $\mathbf{C_I}$ found for system~\cref{eq:san} corresponds to the outward twist case.

\begin{figure}
\centering
\includegraphics[height=6cm]{./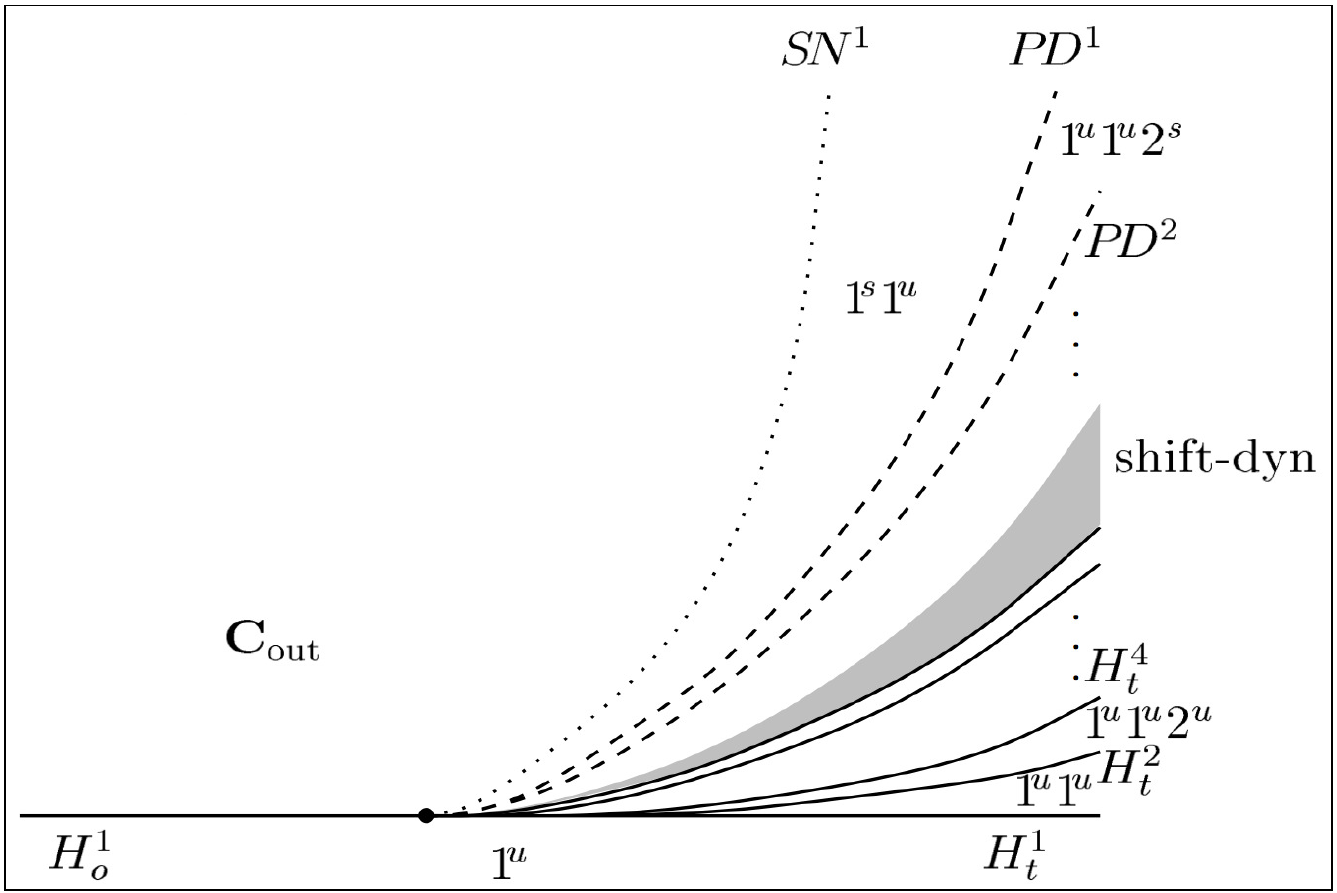}
\caption{Sketch of the theoretical unfolding of an outward twist point~$\mathbf{C_{out}}$ of case~$\mathbf{C}$ from \cite[Fig. 5]{Hom2}. [Reproduced from Journal of Dynamics and Differential Equations, Resonant Homoclinic Flip Bifurcations, 12(4), 2000, pages 807-850, A. J. Homburg and B. Krauskopf, \textcopyright Plenum Publishing Corporation 2000 with permission of Springer.]} 
\label{fig:TheoSketch} 
\end{figure}

\Cref{fig:TheoSketch} shows a sketch of what is known theoretically about the unfolding of a homoclinic flip bifurcation of case~\textbf{C} for an outward twist $\mathbf{C_{out}}$ \cite{Hom1, Hom2}. The unfolding consists of an orientable homoclinic bifurcation curve $H_o^1$ that becomes nonorientable (twisted) $H_t^1$ after transitioning through the codimension-two flip bifurcation point $\mathbf{C_{out}}$. Furthermore, the following bifurcation curves emanate from $\mathbf{C_{out}}$: a saddle-node bifurcation $SN^1$ of periodic orbits, an infinite sequence of period-doubling bifurcations labeled $PD^n$, $n=1,2,4,8,...$, and an infinite sequence of $n$-homoclinic bifurcations labeled $H_t^n$, $n=2,4,8, ...$, where $t$ indicates that these are codimension-one nonorientable (twisted) homoclinic bifurcations. The value of $n$ indicates the number of loops made by the homoclinic orbit before it connects back to the equilibrium point.  As each homoclinic bifurcation occurs, a saddle periodic orbit with $n$ loops is created. The Smale--horseshoe region lies in between the period-doubling and homoclinic cascades; however, the boundaries of this region are not identified in the sketch. Note that the orientability of the saddle periodic orbits, labeled $1^u$ and $2^u$, is not explicitly stated in \cref{fig:TheoSketch}, meaning that labels are not unique to each saddle periodic orbit. 

To explain the overall structure of the unfolding, we start with the orientable saddle periodic orbit $1^u$ that bifurcates from the orientable homoclinic bifurcation $H_o^1$. This same label $1^u$ is also used for the nonorientable saddle periodic orbit that bifurcates from $H_t^1$. Note in the sketch that the orientable and nonorientable saddle periodic orbits persist throughout the cascades of period-doubling and homoclinic bifurcations, and they disappear in the saddle-node bifurcation $SN^1$, after the nonorientable saddle periodic orbit becomes an attracting periodic orbit, labeled $1^s$, in the period-doubling bifurcation $PD^1$. Even though it is not illustrated in \cref{fig:TheoSketch}, the unfolding is known to have infinitely many saddle-node bifurcation curves that also emanate from $\mathbf{C_{out}}$ and occur along the period-doubling cascade \cite{san3}.

\begin{figure}
\centering
\includegraphics{./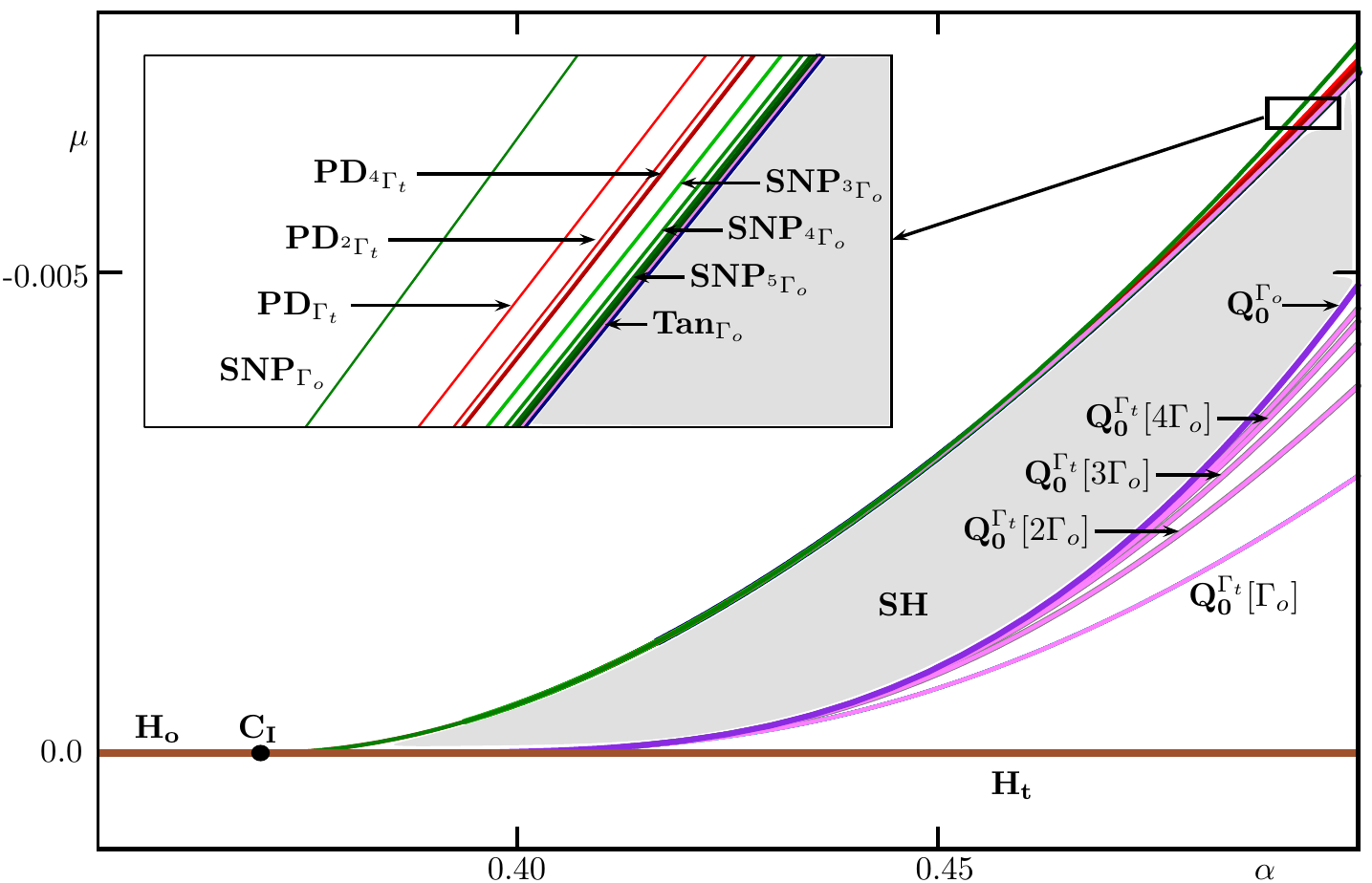}
\caption{Bifurcation diagram in the $(\alpha,\mu)$-plane near an homoclinic flip bifurcation $\mathbf{C_I}$ of system~\cref{eq:san}. The inset shows an enlargement of the indicated area.  Shown are the curves of homoclinic bifurcation~$\mathbf{H_o}$ and $\mathbf{H_t}$ (brown), saddle-node bifurcations $\mathbf{SNP}$ of periodic orbits (dark-green), period-doubling bifurcations $\mathbf{PD}$ (red), heteroclinic bifurcations $\mathbf{Q}_{\mathbf{0}}^{\Gamma_t}$ from $\mathbf{0}$ to $\Gamma_t$ (magenta), heteroclinic bifurcation $\mathbf{Q}_{\mathbf{0}}^{\Gamma_o}$ from $\mathbf{0}$ to $\Gamma_o$ (purple) and codimension-one homoclinic bifurcation bifurcation $\mathbf{Tan}_{\Gamma_o}$ of $\Gamma_o$ (violet). The Smale--horseshoe region $\mathbf{SH}$ is the gray region. The other parameter values are $(a,b,c,\beta,\gamma,\tilde{\mu},\delta)= (0.7,1,-2,1,2,0,0)$.} 
\label{fig:NotTopInv} 
\end{figure}  

Most of the literature on case~\textbf{C} does not delve into details about bifurcations related to interactions of manifolds of saddle periodic orbits. This is also the case in \cref{fig:TheoSketch} where the bifurcation diagram only shows local bifurcations of saddle periodic orbits and homoclinic bifurcations of the real saddle. For ease of exposition and in contrast to \cref{fig:TheoSketch}, we label the orientable saddle periodic orbit that bifurcates from $H_o^1$ as $\Gamma_o$, and the nonorientable saddle periodic orbit that bifurcates from $H_t^1$ as $\Gamma_t$ throughout this paper. 

\subsection{Computing the bifurcation diagram near $\mathbf{C_I}$}

By computing the interaction of the manifolds of these saddle periodic orbits with other objects, we are able to present what is known about the unfolding of case~\textbf{C} in a new light, namely, as part of the bifurcation diagram in the $(\alpha,\mu)$-plane of system~\cref{eq:san}.  Throughout this paper, we center our attention on the saddle periodic orbits $\Gamma_o$ and $\Gamma_t$, and their corresponding global manifolds.  As a starting point, we show in \cref{fig:NotTopInv} the bifurcation diagram for system~\cref{eq:san} in the $(\alpha, \mu)$-plane near the homoclinic flip point $\mathbf{C_I}$ of case~\textbf{C}. Note that we changed the orientation of the $\mu$-axis from top to bottom in the bifurcation diagram for ease of comparison. \Cref{fig:NotTopInv} shows the principal homoclinic branch (brown curve) together with representative period-doubling $\mathbf{PD}$ (red curves) and saddle-node $\mathbf{SNP}$ (green curves) bifurcations of periodic orbits. To exemplify the relevance of the global bifurcations concerning saddle periodic orbits, we also compute representative codimension-one heteroclinic and homoclinic bifurcations in system~\cref{eq:san} of the saddle periodic orbits $\Gamma_o$ and $\Gamma_t$, labeled $\mathbf{C}_{\mathbf{0}}^{\Gamma_o}$ and $\myEtoP{\mathbf{0}}{\Gamma_t}{m\Gamma_o}$ for $m=1,2,3,4$. At first glance, the period-doubling and saddle-node bifurcation curves are indiscernible near the inclination flip point $\mathbf{C_I}$ in \cref{fig:NotTopInv}. To visualize these curves, the inset shows an enlargement of a region of the bifurcation diagram that distinguishes the first three computed period-doubling bifurcation $\mathbf{PD}_{\Gamma_t}$, $\mathbf{PD}_{^2\Gamma_t}$ and $\mathbf{PD}_{^4\Gamma_t}$ of a period-doubling cascade. Furthermore, it shows the saddle-node bifurcation curves $\mathbf{SNP}_{\Gamma_o}$, $\mathbf{SNP}_{^3\Gamma_o}$, $\mathbf{SNP}_{^4\Gamma_o}$ and $\mathbf{SNP}_{^5\Gamma_o}$ which are responsible for the disappearance of the orientable saddle periodic orbits created during the homoclinic cascade, as will be discussed in detail in \cref{sec:Cascades}. In fact, by computing representative codimension-one homoclinic bifurcations close to $\mathbf{C_I}$, with an implementation of Lin's method in \textsc{Auto} \cite{san2}, we are able to obtain the bifurcating saddle periodic orbits; this allow us to find the subsequent saddle-node bifurcation curves presented in \cref{fig:NotTopInv}. For the remainder of this paper, the subindices in the label of each saddle-node bifurcation $\mathbf{SNP}$ of periodic orbits refers to the orientable saddle period orbit involved. The same applies to the period-doubling bifurcations $\mathbf{PD}$, where the subindices refer to the corresponding nonorientable saddle periodic orbit involved. Note how the saddle-node bifurcation curves accumulate onto the codimension-one homoclinic bifurcation $\mathbf{Tan}_{\Gamma_o}$ (violet curve) of $\Gamma_o$ where the stable and unstable manifolds of $\Gamma_o$ have a tangency. We identify the curve $\mathbf{Tan}_{\Gamma_o}$ as one of the boundaries of the Smale--horseshoe region $\mathbf{SH}$ (gray region), as it delimits the region in the bifurcation diagram where there exist structurally stable homoclinic orbits of $\Gamma_o$. 

Overall, the computed \cref{fig:NotTopInv} agrees well with the sketch in \cref{fig:TheoSketch}; however, one of the most apparent differences are the curves $\myEtoP{\mathbf{0}}{\Gamma_t}{m\Gamma_o}$, with $m=1,2,3,4$, of heteroclinic connecting orbits between $\mathbf{0}$ and $\Gamma_t$ (magenta curves). For each of these heteroclinic bifurcation curves, we find many homoclinic and heteroclinic bifurcation curves accumulating on it; these are not shown in \cref{fig:NotTopInv} because they lie very close to the curves $\myEtoP{\mathbf{0}}{\Gamma_t}{m\Gamma_o}$. We describe in more detail these accumulation cascades in \cref{sec:Cascades}. Notice in \cref{fig:NotTopInv} how the sequence of heteroclinic bifurcation curves $\myEtoP{\mathbf{0}}{\Gamma_t}{m\Gamma_o}$ accumulates onto the heteroclinic bifurcation $\mathbf{Q}_{\mathbf{0}}^{\Gamma_o}$ (purple curve) of a heteroclinic connecting orbit between $\mathbf{0}$ and $\Gamma_o$, which corresponds to the other boundary of the Smale--horseshoe region $\mathbf{SH}$ in the bifurcation diagram. Both boundary curves, $\mathbf{Tan}_{\Gamma_o}$ and $\mathbf{Q}_{\mathbf{0}}^{\Gamma_o}$, were not included or studied in the theoretical unfolding as shown in \cref{fig:TheoSketch}. Furthermore, the bifurcation $\mathbf{Tan}_{\Gamma_o}$ acts as mechanism to destroy a strange attractor, which was conjectured to exist close to the inclination flip point due to the cascades of period-doubling and saddle-node bifurcation \cite{Nau1,Nau2}; this will be discussed in more detail in \cref{sec:strAttrPD}.  

The main purpose of this paper is to perform a detailed study of the role of representative invariant manifolds in the reorganization of phase space at different codimension-one bifurcations in the bifurcation diagram of system~\cref{eq:san} near the point $\mathbf{C_I}$. In this way, we obtain new insights into the bifurcation structures that must be expected as part of the unfolding of the inclination flip bifurcation of case~\textbf{C}. Of particular interest is to understand the nature and roles of the infinitely many cascades of codimension-one homoclinic and heteroclinic bifurcations, the boundaries in parameter plane of the Smale--horseshoe regions, and the existence and annihilation of strange attractors, etc. To present our results, we choose parameter values in $(\alpha,\mu)$-plane close to $\mathbf{C_I}$ and provide representative figures of the relevant objects in phase space, as well as their intersection sets with a suitable sphere. This allows us to show the consequences of different codimension-one bifurcations for the organization of phase space, and to present an overall picture of the unfolding through the bifurcation diagram of $\mathbf{C_I}$ in the $(\alpha,\mu)$-plane.  Motivated by the existence of infinitely many homoclinic and heteroclinic bifurcations, we define a winding number $\zeta$ as a topological invariant for system~\cref{eq:san}. We compute the value of $\zeta$ for system~\cref{eq:san} as $\alpha$ and $\mu$ vary over a grid. Such a  two-parameter sweep allows us to identify open regions in the $(\alpha,\mu)$-plane with constant winding number, both near $\mathbf{C_I}$ and even further away from this central codimension-two point. The respective boundaries of these regions correspond to codimension-one homoclinic bifurcation curves. Hence, this parameter sweep contributes to obtaining a clearer picture of how the infinitely many homoclinic bifurcation are organized and accumulate onto heteroclinic bifurcations curves of the saddle periodic orbit $\Gamma_t$.  Furthermore, by considering a larger parameter range in the $(\alpha,\mu)$-plane, we are able to identify and characterize a phenomenon where two different homoclinic bifurcation curves emanating from $\mathbf{C_I}$ meet and create a structure referred to as a homoclinic bubble \cite{Hom2}. Such homoclinic bubbles do not arise as part of the unfolding in a small neighborhood of a codimension-two homoclinic flip bifurcation; however, they have been shown to be part of the unfolding of a codimension-three resonant homoclinic flip bifurcation, where they constitute a mechanism of  transition between cases~\textbf{B} and \textbf{C} \cite{Hom2, OldKra1}.

The organization of this paper is as follows. In \cref{sec:Not} we introduce some notation; here, we also present the parameter values used, and give the definition of the winding number $\zeta$. We present, in \cref{sec:IncCaseC}, the bifurcation diagram of system~\cref{eq:san} near $\mathbf{C_I}$. \Cref{sec:EasyRegions} then focusses on the transition through the main codimension-one homoclinic bifurcation curve, \cref{sec:Cascades} on the homoclinic and heteroclinic cascades, \cref{sec:HorseShoe} on the Smale--horseshoe region, and \cref{sec:strAttrPD} on the period-doubling cascade and the existence of a strange attractor that resembles the R\"{o}ssler attractor. Finally, we characterize, in \cref{sec:Bubbles}, the bubble phenomenon arising in the bifurcation diagram of $\mathbf{C_I}$.  We end in \cref{sec:Dis} with a discussion and an outlook on future research.  

The computations in this paper are performed with the pseudo-arclength continuation package \textsc{Auto} \cite{Doe1,Doe2} and its extension \textsc{HomCont} \cite{san2}.  More specifically, global manifolds are computed with a two-point boundary value problem set-up \cite{Call1, Kra2} and the heteroclinic orbits are obtained with Lin's method \cite{KraRie1,Kirk2}. The parameter sweeping of $\zeta$ in parameter plane, visualization and post-processing of the data are performed with \textsc{Matlab}\textsuperscript{\textregistered}.

\section{Notation and set-up} 
\label{sec:Not}

We choose parameters such that the Jacobian $Df(\mathbf{0})$ of $\textbf{0}$ has two stable and one unstable eigenvalues, $\lambda^{ss} < \lambda^{s} < 0 < \lambda^{u}$ with eigenvectors $e^{ss}, e^s$ and $e^u$, respectively. The global stable manifold $W^s(\textbf{0})$ is a surface foliated by trajectories that converge to $\textbf{0}$ as $t \rightarrow \infty$, and global unstable manifold $W^u(\textbf{0})$ consist of two trajectories that converge to $\textbf{0}$ as $t \rightarrow -\infty$. The manifold $W^s(\mathbf{0})$ and $W^u(\mathbf{0})$ are immersed manifolds in $\R^3$: they are as smooth as $f$ and tangent to the linear spaces spanned by the stable and unstable eigenvectors of $\mathbf{0}$, respectively \cite{Palis1}.  Furthermore, associated with $\lambda^{ss}$, there is a unique one-dimensional strong stable manifold $W^{ss}(\mathbf{0}) \subset W^{s}(\mathbf{0})$, defined as the subset of points on $W^{s}(\mathbf{0})$ that converges to $\mathbf{0}$ tangent to the eigenvector $e^{ss}$. 

System~\cref{eq:san} has a second equilibrium $\mathbf{q}$ for the parameters chosen, which is a stable focus that lies near $\mathbf{0}$. We denote its basin of attraction as $\mathcal{B}(\mathbf{q})$. The set $\mathcal{B}(\mathbf{q})$ is an open connected set of $\R^3$ and consists of all points in phase space that converge to $\mathbf{q}$. We also denote by $W^{ss}(\mathbf{q}) \subset \mathcal{B}(\mathbf{q})$ the subset of points that converge to $\mathbf{q}$ tangent to the eigenvector associated with the real eigenvalue of $\mathbf{q}$, which is the largest eigenvalue in modulus.

Let $\Gamma$ be a periodic orbit of system~\cref{eq:san}. We denote the two nontrivial Floquet multipliers of $\Gamma$ by $\Lambda_1,\Lambda_2 \in \C$; they are the eigenvalues of the variational equation of system~\cref{eq:san} along $\Gamma$ at time $T$, where $T$ is the period of $\Gamma$. Note that the Floquet multipliers of $\Gamma$ in a three-dimensional vector field are always such that their real parts have the same sign; each one has an associated eigenfunction that is referred as the Floquet bundle \cite{Leonid1}. If $\Lambda_1,\Lambda_2\in\R$ and $0<|\Lambda_1|<1<|\Lambda_2|$ then one speaks of a saddle periodic orbit. A saddle periodic orbit has stable $W^s(\Gamma)$ and unstable $W^u(\Gamma)$ manifolds which consist of points that converge to $\Gamma$ forward and backward in time, respectively. As for the saddle equilibrium case, the associated stable and unstable manifolds of a saddle periodic orbit are two dimensional immersed manifolds; they are tangent to the Floquet bundle of the periodic orbit associated with $\Lambda_1$ and $\Lambda_2$, respectively \cite{Palis1}.  If $0<\Lambda_1<1<\Lambda_2$, one speaks of an orientable saddle periodic orbit, which we denote by $\Gamma_o$, and its stable and unstable manifolds $W^s(\Gamma_o)$ and $W^u(\Gamma_o)$, respectively, are topological cylinders \cite{Hin1, Leonid1}. Similarly, if $\Lambda_2<-1<\Lambda_1<0$, then the saddle periodic orbit is nonorientable, denoted $\Gamma_t$, and $W^s(\Gamma_t)$ and $W^u(\Gamma_t)$ are locally topological M\"obius bands \cite{Hin1, Leonid1}. 

Let $S_1, S_2, S_3$ be hyperbolic saddle invariant objects of system~\cref{eq:san} with manifolds $W^u(S_1)$, $W^s(S_2)$, $W^u(S_2)$ and $W^s(S_3)$ that are all two-dimensional. If there exist structurally stable heteroclinic orbits from $S_1$ to $S_2$ and from $S_2$ to $S_3$, that is, if $W^u(S_1)\cap W^s(S_2)$ and $W^u(S_2)\cap W^s(S_3)$ are non-empty transversal intersections, then we use the notation $S_1 \rightarrow S_2 \rightarrow S_3$ to represent this situation.

\subsection{Sandstede's Model} 
\label{sec:san}

System~\cref{eq:san} was constructed specifically to study different homoclinic flip bifurcations in three-dimensional vector fields \cite{san1}.  It is a very convenient vector field, because its parameter can be chosen in such a way that either one of the cases \textbf{A}, \textbf{B} and \textbf{C} occur for both inclination and orbit flip bifurcations. System~\cref{eq:san} has been used extensively, particularly to study numerically transitions between the three cases as codimension-three phenomena due to resonance \cite{OldKra1}, and to investigate the unfoldings of homoclinic flip bifurcations of cases \textbf{A} and \textbf{B} \cite{Agu1, And1}. 

Note that $\mathbf{0}$ is an equilibrium of $X^s$ for all parameter values. We use the parameter ranges found in \cite{OldKra1} as a reference to study case \textbf{C} in the $(\alpha,\mu)$-plane and fix parameters as given in \cref{tab:sanValues}. Since $\delta=0$, the $z$-axis is invariant under the flow of system~\cref{eq:san} and the eigenvalues of $\mathbf{0}$ are given by
\begin{equation*}
\lambda_{1,2}= a \pm \sqrt{b^2+4\tilde{ \mu}^2} \text{ and }
\lambda_3=c.
\end{equation*}

\begin{table}
\begin{center}
\begin{tabular}{|c|c|c|c|c|c|c|}
\hline
$a$    & $b$ & $c$  & $\beta$ & $\gamma$ & $\tilde{\mu}$  & $\delta$\\ \hline
0.7  & 1.0 & -2.0 & 1.0    & 2.0     & 0.0    & 0.0   \\ \hline
\end{tabular}
\vspace{2mm}
\caption{Parameter values chosen for an inclination flip of case~$\mathbf{C}$ in system~\cref{eq:san}.} 
\label{tab:sanValues} 
\end{center}
\end{table}

The inclination flip bifurcation point $\mathbf{C_I}$ of case~\textbf{C} unfolds with respect to $\alpha$ and $\mu$. At $\mathbf{C_I}$, the equilibrium $\mathbf{0}$ has eigenvalues $\lambda_1=1.7$, $\lambda_2=-0.3$ and $\lambda_3=-2$, which confirms that $2|\lambda^s|<\lambda^u$ as required for case \textbf{C}. Note that the strong stable manifold $W^{ss}(\mathbf{0})$ is the $z$-axis. 

For our chosen parameters, there exists a stable equilibrium $\mathbf{q}$, which is the same equilibrium that appears in the discussion of cases \textbf{A} and \textbf{B} in \cite{Agu1, And1}. However, unlike for these two cases, the stability of $\mathbf{q}$ is not of relevance in our study of case~\textbf{C}. More specifically, we find that the orientable saddle periodic orbit $\Gamma_o$ takes on the role of \textbf{q}; in particular it is responsible for the creation of a fold bifurcation \textbf{F} in case~\textbf{C}. For the case \textbf{C} considered in this paper, the equilibrium $\mathbf{q}$ is attracting and identifying its basin of attraction is critical for understanding the reorganization of phase space close to $\mathbf{C_I}$; see \cref{sec:IncCaseC}. 

\subsection{Definition of the winding number}
\label{sec:topInv}

The unfolding of an inclination flip bifurcation of case \textbf{C} is characterized by the existence of $k$-homoclinic orbits for any $k \in \N$ \cite{Hom1}.  In \cite{OldKra1}, these codimension-one homoclinic bifurcations in the parameter plane are studied via the continuation of solutions to a suitable two-point boundary value problem (2PBVP) during the transition between cases~\textbf{B} and \textbf{C}. A drawback of this technique is the limitation of sampling only a small number of these bifurcation curves, which increases the risk of missing subtle interactions in the parameter plane.  We define a winding number $\zeta$ for system~\cref{eq:san} and run a two-parameter sweep in the $(\alpha,\mu)$-plane to complement the 2PBVP approach of finding bifurcation curves. Parameter sweeping techniques have been used in part to understand the nature of cascades of homoclinic bifurcation close to Bykov T-points in the Lorenz system \cite{Shil3} and Shimizu-Morioka system \cite{Shil3, Shil1}. It is also useful to illustrate spike adding in neuron models; for example, see \cite{Barrio1,Shil4}.  By combining continuation and parameter sweeping, we are able to characterize different phenomena not only in the vicinity of the homoclinic flip bifurcation point of case \textbf{C}, but also far away from it.  

\begin{figure}
\centering
\includegraphics{./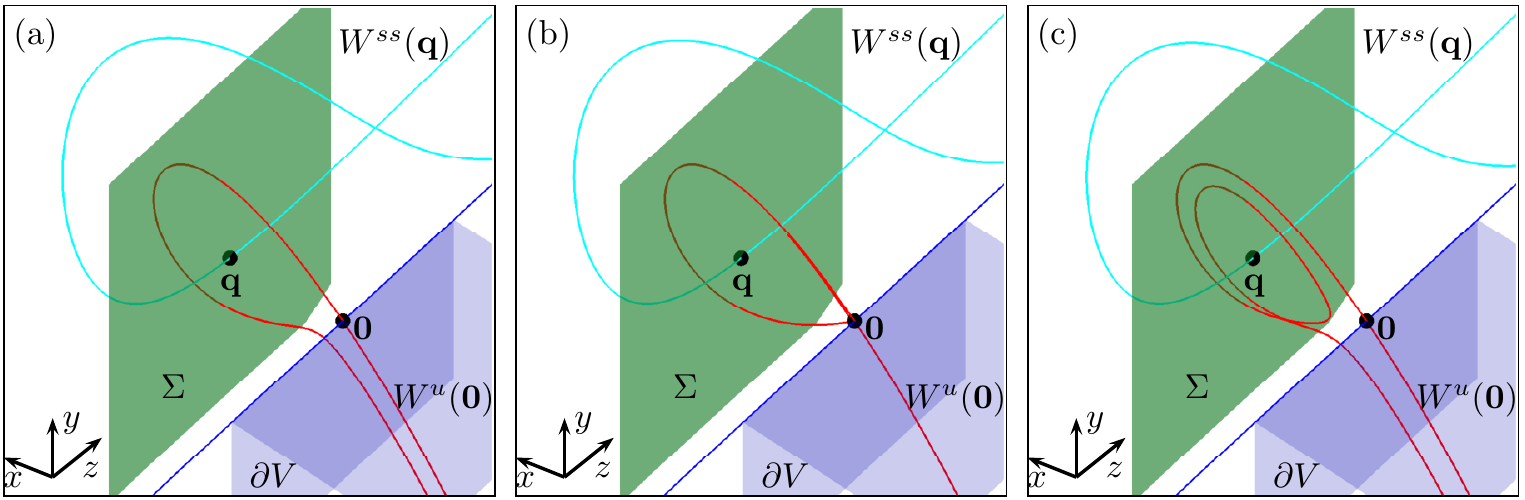}
\caption{Illustration of the change of the winding number $\zeta$ for system~\cref{eq:san}.  Shown are $W^u(\mathbf{0})$ (red curve), $W^{ss}(\mathbf{0})$ (dark-blue curve) and $W^{ss}(\mathbf{q})$ (cyan curve), $\Sigma$ (green plane), the boundaries $\partial V$ (blue planes). The parameter values are $\mu=0.001$, $\mu=0.0$ and $\mu=-0.001$ for panels~(a), (b) and (c), respectively. Furthermore, $\alpha=0.5$ and the other parameter values are as given in \cref{tab:sanValues}.} 
\label{fig:topInv} 
\end{figure}  

The value of $\zeta$ encodes the number of rotations that $W^u(\mathbf{0})$ makes around a tubular neighborhood of the homoclinic orbit that exists at the central singularity. In the present setting, $\zeta$ can be defined conveniently as the number of rotations around the one-dimensional manifold $W^{ss}(\mathbf{q})$, which always goes through `the hole' of any such tubular neighborhood. To define the quantity $\zeta$ formally, let $V:=\{p=(x,y,z)\in\R^3 \;|\; x\leq0\text{ and }y\leq 0\}$ and let $\partial V$ be its boundary. Since we set $b=1>0$ and $\delta=0$ in \cref{tab:sanValues}, we have $\dot{x}<0$ on $\{ x=0 \}\cap V\setminus \{y=0\}$ and $\dot{y}<0$ on $ \{ y=0 \} \cap V\setminus \{ x=0\}$. Furthermore, the $z$-axis, $\{ x=0\} \cap \{ y=0\}$, is a subset of $V$ and it is invariant.  Hence, $V$ is a positively invariant set for system~\cref{eq:san}, i.e., $\phi^t(V)\subset V$ for all $t \geq 0$ where $\phi^t$ is the flow defined by system~\cref{eq:san}. Therefore, any intersection of $W^u(\mathbf{0})$ with $\partial V$ must be transversal and $W^{ss}(\mathbf{q})$ cannot intersect $\partial V$. We view $\zeta$, that is, the number of rotations that $W^u(\mathbf{0})$ makes around $W^{ss}(\mathbf{q})$, as a kind of linking number, which can only vary through bifurcation. Indeed, $W^{ss}(\mathbf{q})$ never intersects $\partial V$, and $W^u(\mathbf{0})$ cannot follow $W^{ss}(\mathbf{q})$ to undo itself by restriction of the flow of system~\cref{eq:san}; therefore, we may view these orbits segments as closed curves by identifying their endpoints.

Homoclinic bifurcations are a mechanism for $\zeta$ to change, as is illustrated in \cref{fig:topInv} for system~\cref{eq:san} with $\alpha=0.5$ and three different values of $\mu$.  In panel (a) the manifold $W^u(\mathbf{0})$ loops once around $W^{ss}(\mathbf{q})$ before reaching $\partial V$, that is, $\zeta=1$.  As $\mu$ decreases, system~\cref{eq:san} goes through a homoclinic bifurcation at $\mu = 0$ in panel~(b), after which $W^u(\mathbf{0})$ makes an extra turn around $W^{ss}(\mathbf{0})$ before intersecting $\partial V$, as shown in panel~(c); hence, $\zeta$ increases to 2. 

In practice, we calculate $\zeta$ by counting the number of intersections of $W^u(\mathbf{0})$ with $\Sigma := \lp\{ (x,y,z) \in \R^3 : x = \mathbf{q}_x \rp\}$, where $\mathbf{q}_x$ is the $x$-component of $\mathbf{q}$, and dividing this number by two; see \cref{fig:topInv}.  The parameter sweeps of $\zeta$ shown in \cref{fig:Second,,fig:Third,,fig:Bubble} are performed on a $1000\times1000$ grid over the corresponding parameter ranges.

\section{Bifurcation diagram near $\mathbf{C_I}$} 
\label{sec:IncCaseC}

\begin{figure}
\centering
\includegraphics[height=250pt]{./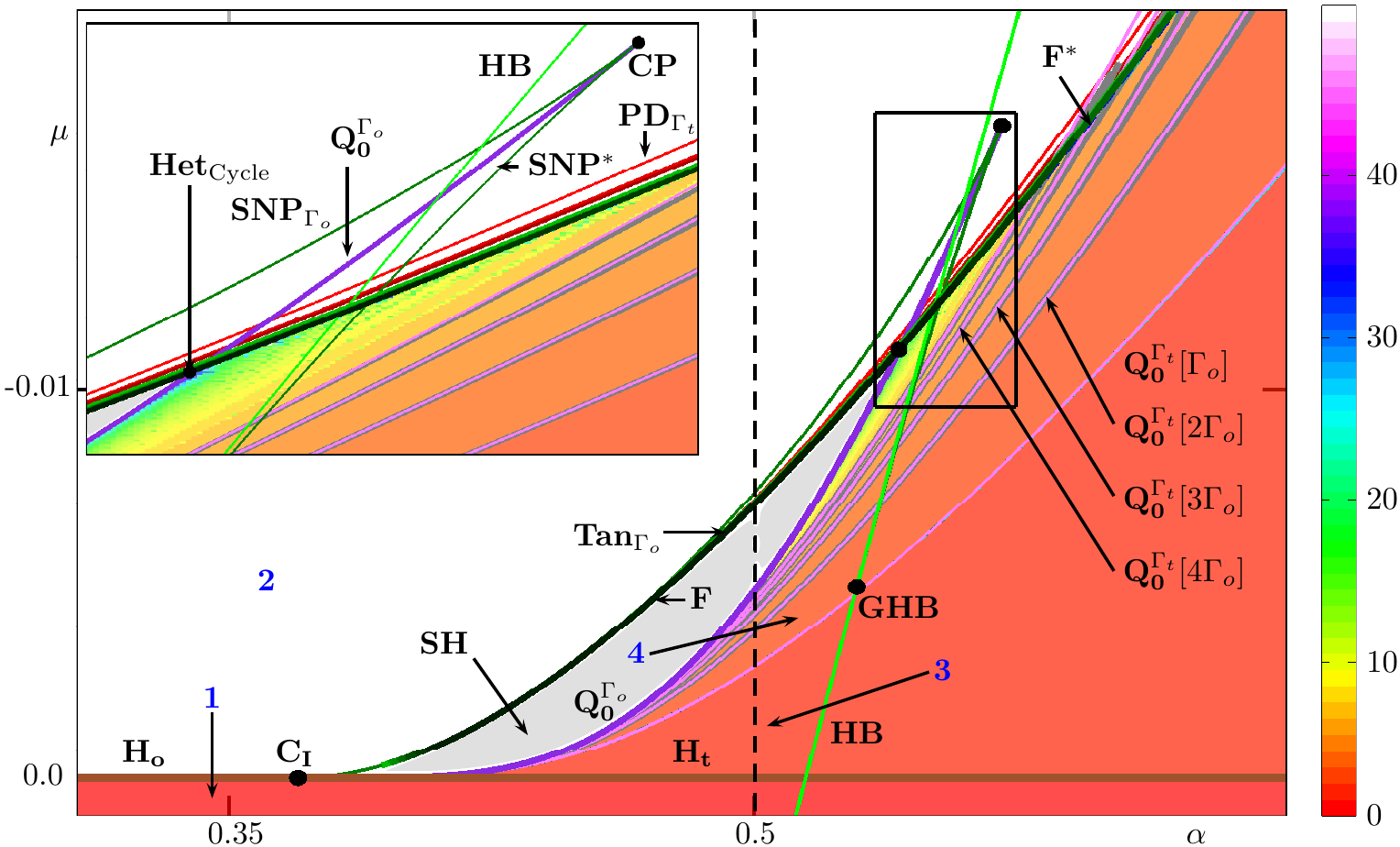} 
\caption{Bifurcation diagram in the $(\alpha,\mu)$-plane near an inclination flip bifurcation $\mathbf{C_I}$ of case~\textbf{C} of system~\cref{eq:san}, with coloring of regions according to the winding number $\zeta$ as given by the colorbar.  The inset shows an enlargement of the region indicated in the main panel. Shown are the curves of homoclinic bifurcation~$\mathbf{H_o}$ and $\mathbf{H_t}$ (brown), non-principal homoclinic bifurcation (cyan), saddle-node bifurcation $\mathbf{SNP}_{\Gamma_o}$ and $\mathbf{SNP^*}$ of periodic orbits (dark green), Hopf bifurcation $\mathbf{HB}$ (light green), period-doubling bifurcation $\mathbf{PD}$ (red), fold bifurcation curve $\mathbf{F}$ of heteroclinic orbits from $\Gamma_o$ to $\mathbf{0}$ (blue) and its extension $\mathbf{F^*}$ (blue dashed curve), heteroclinic bifurcation $\mathbf{Q}_{\mathbf{0}}^{\Gamma_t}$ from $\mathbf{0}$ to $\Gamma_t$ (magenta), heteroclinic bifurcation $\mathbf{Q}_{\mathbf{0}}^{\Gamma_o}$ from $\mathbf{0}$ to $\Gamma_o$ (purple) and codimension-one homoclinic bifurcation $\mathbf{Tan}_{\Gamma_o}$ of $\Gamma_o$ (violet). The curves $\mathbf{SNP}_{\Gamma_o}$ and $\mathbf{SNP^*}$ meet at the cusp point $\mathbf{CP}$, and $\mathbf{SNP^*}$ ends at the generalized Hopf bifurcation point $\mathbf{GHB}$. The curve $\mathbf{F}$ meets $\mathbf{Q}_{\mathbf{0}}^{\Gamma_o}$ at a codimension-two heteroclinic cycle point $\mathbf{Het_{\rm Cycle}}$; the Smale--horseshoe region $\mathbf{SH}$ (gray region) is indicated.  We denote by \mBlue{1}, \mBlue{2}, \mBlue{3} and \mBlue{4} four different regions close to the $\mathbf{C_I}$ point and the line $\alpha=0.5$ (dashed). The other parameters are as in \cref{tab:sanValues}.} 
\label{fig:Second} 
\end{figure}  

We now present more details of the bifurcation diagram shown in \cref{fig:NotTopInv}. Namely, \cref{fig:Second} shows it over a slightly larger range of the $(\alpha, \mu)$-plane together with the parameter sweep of $\zeta$ where the colors indicate the value of the winding number~$\zeta$; the inset shows an enlargement of the indicated region of the main panel. At first glance, we distinguish the curves of period-doubling, saddle-node and heteroclinic bifurcations presented in \cref{fig:NotTopInv}. Recall that the subindices in the label of each saddle-node bifurcation $\mathbf{SNP}$ and period-doubling bifurcation $\mathbf{PD}$ of periodic orbits refers to the orientable and nonorientable saddle period orbit involved, respectively. Specifically for this bifurcation diagram, we only compute and label one of the infinitely many saddle-node and period-doubling curves that emanate from $\mathbf{C_I}$, namely, the saddle-node bifurcation $\mathbf{SNP}_{\Gamma_o}$ of $\Gamma_o$ (green curve) and period-doubling bifurcation $\mathbf{PD}_{\Gamma_t}$ of $\Gamma_t$ (red curve). 

We now discuss the overall features of the bifurcation diagram in \cref{fig:Second}; further details will be presented in later sections. Note that, in between successive curves $\myEtoP{\mathbf{0}}{\Gamma_t}{m\Gamma_o}$, with $m=1,2,3,4$, of heteroclinic connecting orbits between $\mathbf{0}$ and $\Gamma_t$ (magenta curves), there exist big regions of the $(\alpha,\mu)$-parameter plane with constant values of $\zeta$. The value of $\zeta$ in these regions increases as we approach the bifurcation $\mathbf{Q}_{\mathbf{0}}^{\Gamma_o}$ (purple) of a heteroclinic connecting orbit between $\mathbf{0}$ and $\Gamma_o$. This indicates the existence of more bifurcation curves $\myEtoP{\mathbf{0}}{\Gamma_t}{m\Gamma_o}$, for $m > 4$. The labels of the codimension-one heteroclinic bifurcation are deliberately chosen to encode information of the corresponding heteroclinic orbit in phase space. \Cref{fig:ConfigHet} shows, in phase space, representative heteroclinic orbits $\mathbf{Q}_{\mathbf{0}}^{\Gamma_t}$ and $\mathbf{Q}_{\mathbf{0}}^{\Gamma_o}$ (red curves) together with the saddle periodic orbits $\Gamma_t$ (purple curve) and $\Gamma_o$ (green curve) at the moment of the corresponding heteroclinic bifurcation. For simplicity, we use the same label of the heteroclinic bifurcation to refer to the corresponding orbit in phase space.  The heteroclinic orbit is formed by one branch of $W^u(\mathbf{0})$, which is the branch shown in \fref{fig:ConfigHet}. In panel~(a1), the heteroclinic orbit $\myEtoP{\mathbf{0}}{\Gamma_t}{\Gamma_o}$ makes an excursion close to the $(x,y)$-plane, and then follows $\Gamma_o$ for one rotation before accumulating onto $\Gamma_t$. In panels~(a2)-(a5), the branch of $W^u(\mathbf{0})$ also accumulates onto $\Gamma_t$, but only after making two, three, four and five rotations near $\Gamma_o$, respectively. Hence, the number of rotations that the respective heteroclinic orbit of $\mathbf{Q}_{\mathbf{0}}^{\Gamma_t}$ makes in phase space around $\Gamma_o$ increases until it reaches its limiting case, where the number of rotations around $\Gamma_o$ has increased to infinity, at $\mathbf{Q}_{\mathbf{0}}^{\Gamma_o}$ as shown in panel~(b). This accumulation is seen in \cref{fig:Second}, where the cascade $\myEtoP{\mathbf{0}}{\Gamma_t}{m\Gamma_o}$ accumulates on the final codimension-one heteroclinic bifurcation $\mathbf{Q}_{\mathbf{0}}^{\Gamma_o}$ of a connecting orbit between $\mathbf{0}$ and $\Gamma_o $ (dark-purple curve).  Note also that the winding number $\zeta$ is at its computational maximum after this curve; this is due to the accumulation of the unstable manifold $W^u(\mathbf{0})$ onto $\mathbf{q}$ after $\mathbf{Q}_{\mathbf{0}}^{\Gamma_o}$ is crossed, as will be detailed in \cref{sec:HorseShoe}. This curve $\mathbf{Q}_{\mathbf{0}}^{\Gamma_o}$ represents the last moment where system~\cref{eq:san} exhibits a homoclinic or heteroclinic bifurcation close to $\mathbf{C_I}$, after which the dynamics are complicated due to the existence of structurally stable homoclinic orbits of $\Gamma_o$.

\begin{figure}
\centering
\includegraphics{./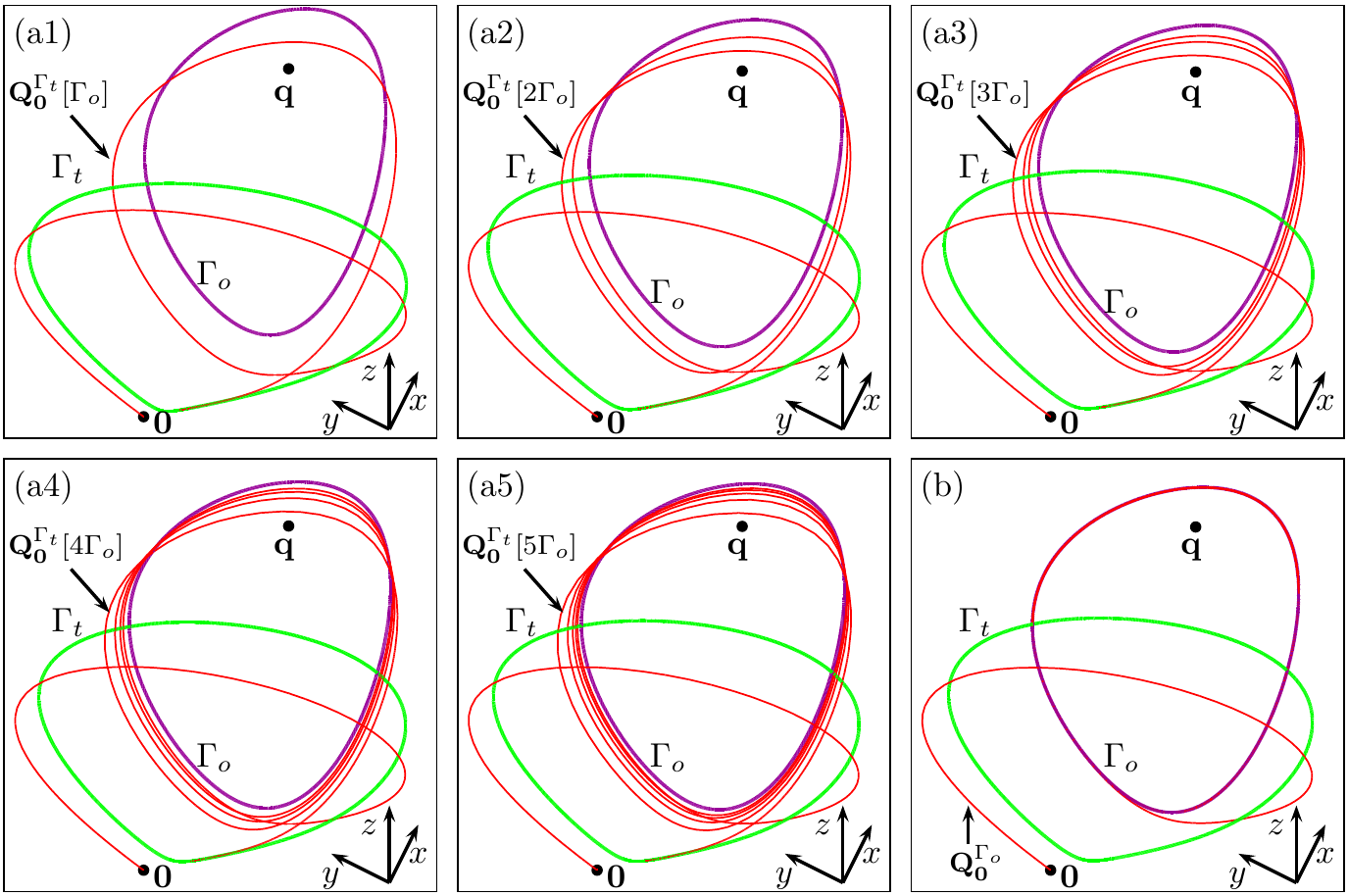}
\caption{Configuration of the unstable manifold $W^u(\mathbf{0})$ of system~\cref{eq:san} in $\R^3$ at the codimension-one heteroclinic bifurcations. Shown are the heteroclinic branch of $W^u(\mathbf{0})$ (red), and the periodic orbits $\Gamma_o$ (purple) and $\Gamma_t$ (green).  Panels~(a1)-(a5) illustrate connections from $\mathbf{0}$ to $\Gamma_t$ and panel (b) shows the connection from $\mathbf{0}$ to $\Gamma_o$. Here, we used $\alpha=0.5$ and the respective $\mu$-values for each panel are given in \cref{tab:Sequence}.} 
\label{fig:ConfigHet}
\end{figure}  

It was proved under the conditions of case~\textbf{C} that a H\'enon-like attractor unfolds from $\mathbf{C_I}$ \cite{Nau1}. The Smale--horseshoe region was conjectured to be bounded by a first quadratic tangency between the stable and unstable manifolds of a saddle periodic orbit \cite{Nau1}. However, it was not even clear at the time that such a first tangency existed in the unfolding of an inclination flip point.  We have found this first tangency and it corresponds to the bifurcation curve $\mathbf{Tan}_{\Gamma_o}$ (violet curve) in \cref{fig:Second} where the stable and unstable manifolds of $\Gamma_o$ have a tangency. After leaving the Smale--horseshoe region through $\mathbf{Tan}_{\Gamma_o}$, we find a strange attractor in phase space whose corresponding branched manifold resembles that of the R\"{o}ssler attractor. This attractor exists during a reverse period-doubling cascade which terminates at the bifurcation $\mathbf{PD}_{\Gamma_t}$, where the saddle periodic orbit $\Gamma_t$ changes to an attracting periodic orbit $\Gamma^a_t$ and the corresponding attracting periodic orbit $^2\Gamma^a_t$ of twice the period disappears. Then $\Gamma^a_t$ disappears in the saddle-node bifurcation $\mathbf{SNP}_{\Gamma_o}$ with $\Gamma_o$. 

Note in \cref{fig:Second} that we also find a bifurcation curve $\mathbf{F}$, similar to the ones found in \cite{Agu1, And1}, that represents the moment when $W^s(\mathbf{0})$ becomes tangent to $W^u(\Gamma_o)$. If one follows the curve~$\mathbf{F}$ in \cref{fig:Second}, one sees that it intersects the curve $\mathbf{Q}_{\mathbf{0}}^{\Gamma_o}$ at a point that we labeled $\mathbf{Het_{\rm Cycle}}$ which is seen more clearly in the corresponding enlargement. At this point, there exists a codimension-two heteroclinic cycle between $\mathbf{0}$ and $\Gamma_o$ in system~\cref{eq:san}.  The unfolding of this cycle has been studied theoretically \cite{Kirk1,Lohr1} as an organizing center for the creation of chaotic regions in the parameter plane and the creation of multi-pulse homoclinic solutions that exhibit flip bifurcations close to it. This agrees with our computations, because the Smale--horseshoe region $\mathbf{SH}$ in \cref{fig:Second} also unfolds from the point $\mathbf{Het_{\rm Cycle}}$; moreover, different homoclinic bifurcation branches in the bifurcation diagram exhibit inclination flip bifurcations, as will be discussed in detail in \cref{sec:Bubbles}. 

Finally, note in \fref{fig:Second} that the additional stable equilibrium $\mathbf{q}$ of system~\cref{eq:san} goes through a Hopf bifurcation labeled $\mathbf{HB}$ (light-green curve) and becomes an unstable saddle focus. On the curve $\mathbf{HB}$, there exists a generalized Hopf bifurcation point $\mathbf{GHB}$, which gives rise to a curve (dark-green) of saddle-node bifurcation $\mathbf{SNP^*}$ at which $\Gamma_o$ and an attracting periodic orbit $\Gamma^a$ are created. The curve $\mathbf{SNP^*}$ ends in a cusp bifurcation point $\mathbf{CP}$ with the curve $\mathbf{SNP}_{\Gamma_o}$; see the inset in \cref{fig:Second}. The periodic orbits $\Gamma_o$ and $\Gamma_t$ disappear on the other side of $\mathbf{SNP}_{\Gamma_o}$ and only $\Gamma^a$ persists; hence, the bifurcation curves $\mathbf{Q}_{\mathbf{0}}^{\Gamma_o}$ and $\mathbf{F}$ disappear at the point $\mathbf{CP}$ and the curve $\mathbf{SNP^*}$, respectively. In particular, we find another curve $\mathbf{F^*}$ (dashed blue line), which represents the moment when $W^s(\mathbf{0})$ becomes tangent to $W^u(\mathbf{q})$; this curve constitutes the extension of $\mathbf{F}$ past its intersection with $\mathbf{SNP^*}$. 

\begin{table}
\begin{center}
\begin{tabular}{c|r@{.}l|r@{.}l|r@{.}l|r@{.}l|r@{.}l|r@{.}l|} 
\cline{2-13}
\multirow{2}{*}{}  & \multicolumn{4}{c|}{\mBlue{1}}  &
\multicolumn{2}{c|}{\multirow{2}{*} {\mBlue{2}}} &
\multicolumn{2}{c|}{\multirow{2}{*}{\mBlue{3}}}   &
\multicolumn{2}{c|}{\multirow{2}{*}{$\mathbf{H_o}$}}  &
\multicolumn{2}{c|}{\multirow{2}{*}{$\mathbf{H_t}$}} \\ \cline{2-5}  
  & \multicolumn{2}{c|}{\mBlue{1_o}} &  \multicolumn{2}{c|}{\mBlue{1_t}}
  &  \multicolumn{2}{c|}{}  &  \multicolumn{2}{c|}{}   &
  \multicolumn{2}{c|}{}   &  \multicolumn{2}{c|}{}   \\ \hline 
\multicolumn{1}{|c|}{$\alpha$} & 0 & 200 &  0 & 500 & 0 & 200 & 0 & 500 & 0 & 200 & 0 & 500 \\ \hline
\multicolumn{1}{|c|}{$\mu$} & $0$ & 050 &  0 & 001 & $-0$ & 001 & $-0$ & 002 & $0$ & 000 &
$0$ & 000 \\  \hline
\end{tabular}
\vspace{2mm}
\caption{Representative parameter values for the open regions \mBlue{1}--\mBlue{3} and the homoclinic bifurcations $\mathbf{H_o}$ and $\mathbf{H_t}$, as used in \fref{fig:Easy}; all other parameter values are as in \cref{tab:sanValues}.} 
\label{tab:Inc1}
\end{center}
\end{table}

\begin{figure}
\centering
\includegraphics{./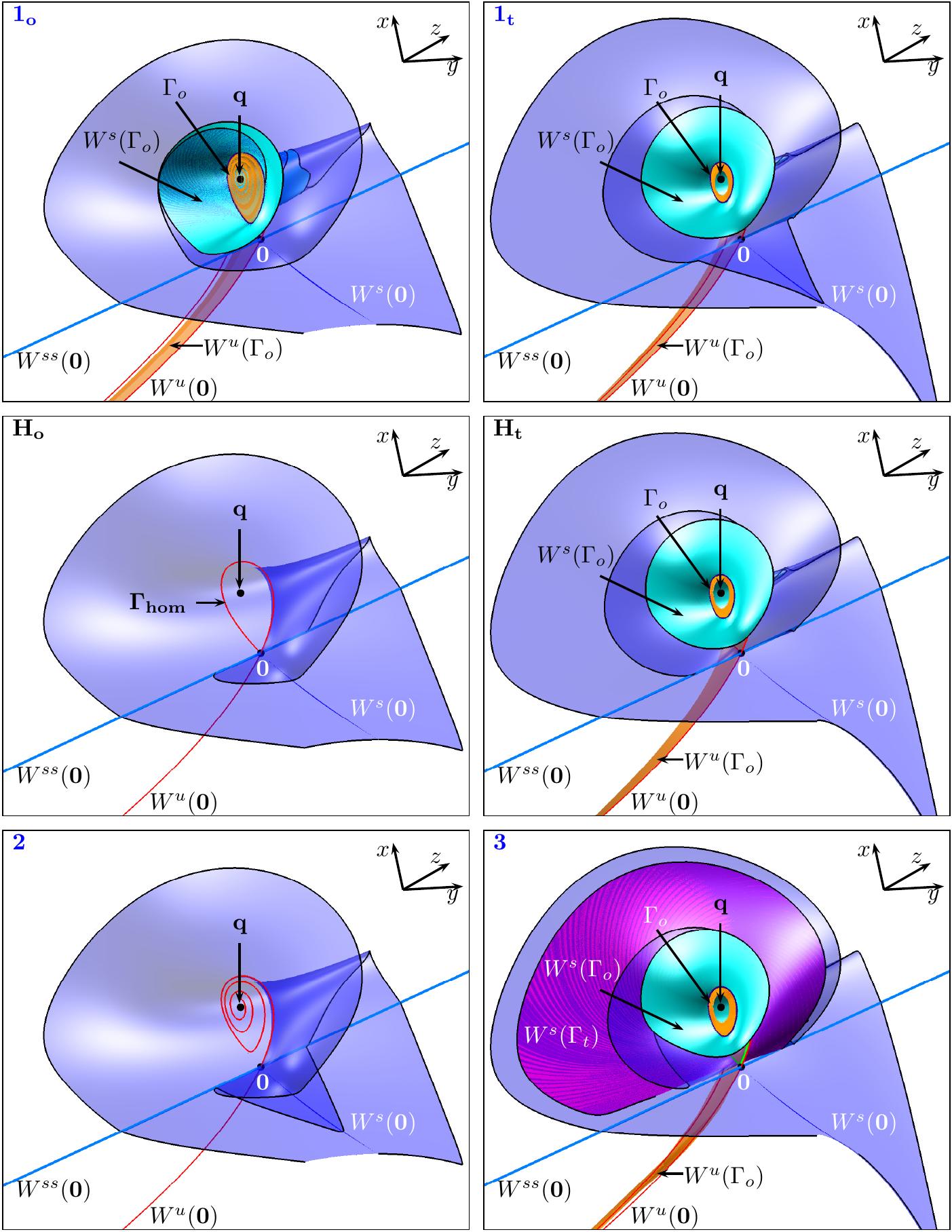}
\caption{Phase portraits of system~\cref{eq:san} in regions \mBlue{1}--\mBlue{3} of the $(\alpha,\mu)$-plane and along the bifurcations $\mathbf{H_o}$ and $\mathbf{H_t}$ near the inclination flip point $\mathbf{C_I}$.  Shown are $W^s(\mathbf{0})$ (blue surface), $W^{ss}(\mathbf{0})$ (blue curve), $W^{u}(\mathbf{0})$ (pink curve), $W^s(\Gamma_o)$ (cyan surface) and $W^s(\Gamma_t)$ (purple surface). The outer rims of the two-dimensional stable manifolds are highlighted in black. The $(\alpha,\mu)$-values for each panel are given in \cref{tab:Inc1}. See also the accompanying animation ({\color{red} GKO\_Cflip\_animatedFig7.gif}).} 
\label{fig:Easy}
\end{figure} 

\section{Transition through the homoclinic bifurcations} 
\label{sec:EasyRegions}

In order to illustrate the difference between crossing the orientable homoclinic bifurcation $\mathbf{H_o}$ and the nonorientable homoclinic bifurcation $\mathbf{H_t}$ in \cref{fig:Second}, we choose representative points in the regions \mBlue{1} to \mBlue{3} and at the bifurcations $\mathbf{H_t}$ and $\mathbf{H_o}$; see \cref{tab:Inc1} for the precise parameters chosen. In particular, we choose two points in region~\mBlue{1}, labeled  $\mBlue{1_o}$ and $\mBlue{1_t}$, which are close to $\mathbf{H_o}$ and $\mathbf{H_t}$, respectively.  The phase portraits with the respective global invariant objects in $\R^3$ are shown in \cref{fig:Easy}.

Region \mBlue{1} is characterized by the existence of the orientable saddle periodic orbit $\Gamma_o$, the saddle equilibrium $\mathbf{0}$ and the stable-focus $\mathbf{q}$.  Panel~$\mBlue{1_o}$ of \fref{fig:Easy} shows the corresponding phase portrait, which consists of the stable manifolds $W^s(\mathbf{0})$ (blue surface) and $W^s(\Gamma_o)$ (cyan surface), and the unstable manifolds $W^u(\mathbf{0})$ (red curve) and $W^u(\Gamma_t)$ (orange surface). Note that $W^s(\Gamma_o)$ is a topological cylinder that bounds the basin of attraction $\mathcal{B}(\mathbf{q})$ of $\mathbf{q}$. The manifold $W^s(\mathbf{0})$ accumulates from the outside onto $W^s(\Gamma_o)$ backward in time. One of the sheets that form $W^u(\Gamma_o)$ lies inside the topological cylinder $W^s(\Gamma_o)$ and, as such, it accumulates onto $\mathbf{q}$; the other sheet is bounded by $W^u(\mathbf{0})$. The accumulation of $W^s(\mathbf{0})$ onto $W^s(\Gamma_o)$ and the fact that $W^u(\mathbf{0})$ bounds one sheet of $W^u(\Gamma_o)$ are due to the $\lambda$-lemma \cite{Palis1,Wigg1}: the existence of a nontransversal intersection between $W^s(\mathbf{0})$ and $W^u(\mathbf{0})$ implies the existence of a structurally stable heteroclinic orbit from $\Gamma_o$ to $\mathbf{0}$. Notice that $W^u(\mathbf{0})$ spirals once around $\Gamma_o$ before escaping to infinity, so that $\zeta=1$ for this region. The boundary between regions~$\mBlue{1_o}$ to \mBlue{2} is the codimension-one orientable homoclinic bifurcation $\mathbf{H_o}$. At this bifurcation, the saddle periodic orbit $\Gamma_o$ becomes the homoclinic orbit $\mathbf{\Gamma_{\rm hom}}$ and the stable manifold $W^s(\mathbf{0})$ closes along its strong stable manifold $W^{ss}(\mathbf{0})$ in a topological cylinder; see panel~$\mathbf{H_o}$ of \fref{fig:Easy}. However, the moment one transitions into region~\mBlue{2}, the homoclinic orbit $\mathbf{\Gamma_{\rm hom}}$ disappears, allowing $W^u(\mathbf{0})$ to accumulate onto $\mathbf{q}$; see panel~\mBlue{2} of \fref{fig:Easy}. Hence, the $\zeta$-value in this region is infinite. 

We now focus on the transition through the codimension-one nonorientable homoclinic bifurcation $\mathbf{H_t}$. At the point~\mBlue{1_t}, system~\cref{eq:san} is close to a nonorientable homoclinic bifurcation.  Note that the computed manifolds in panels~\mBlue{1_t} and \mBlue{1_o} are topologically equivalent, but the way $W^s(\mathbf{0})$ approaches backward in time onto $W^{ss}(\mathbf{0})$ vastly differs. In particular, note in panel~\mBlue{1_t} how the bottom part of the outer layer of $W^s(\mathbf{0})$ that accumulates onto $W^s(\Gamma_o)$ twists as it gets closer to $W^{ss}(\mathbf{0})$.  In the transition from region~$\mBlue{1_t}$ to region~\mBlue{3}, system~\cref{eq:san} exhibits the bifurcation $\mathbf{H_t}$ illustrated in panel~$\mathbf{H_t}$ of \fref{fig:Easy}. At this bifurcation, the stable manifold $W^s(\mathbf{0})$ closes along its strong stable manifold $W^{ss}(\mathbf{0})$ in a topological M\"obius band. In contrast to $\mathbf{H_o}$, the saddle periodic orbit $\Gamma_o$ does not become the homoclinic orbit $\mathbf{\Gamma_{\rm hom}}$ at the bifurcation $\mathbf{H_t}$.  Then in region~\mBlue{3}, the saddle periodic orbit $\Gamma_o$ persists and $\mathbf{\Gamma_{\rm hom}}$ bifurcated into the nonorientable saddle periodic orbit $\Gamma_t$; see panel~\mBlue{3} of \cref{fig:Easy}. Its stable manifold $W^s(\Gamma_t)$ (purple surface) accumulates onto $\Gamma_o$ backward in time, and $W^s(\mathbf{0})$ lies in between $W^s(\Gamma_o)$ and $W^s(\Gamma_t)$. Contrary to region~\mBlue{2}, the unstable manifold $W^u(\mathbf{0})$ spirals twice around $W^s(\Gamma_o)$ and then escapes to infinity, so that $\zeta=2$ for this region. Furthermore, the two-dimensional stable and unstable manifolds in region~\mBlue{3} have transverse intersections; in particular, we have a configuration of the form $\Gamma_o\rightarrow \Gamma_t \rightarrow \mathbf{0}$, that is, there exist heteroclinic orbits from $\Gamma_o$ to $\Gamma_t$ and from $\Gamma_t$ to $\mathbf{0}$.  

By applying the $\lambda$-lemma \cite{Palis1,Wigg1} in a Poincar\'e section of $\Gamma_t$, we can show that $\Gamma_o\rightarrow \Gamma_t \rightarrow \mathbf{0}$ implies, in fact, the existence of infinitely many heteroclinic orbits from $\Gamma_o$ to $\mathbf{0}$.  Hence, we have the following:
\begin{rem} \label{rem:CaseB}
The phase portrait in region~\mBlue{3} is topological equivalent to that in region~\mRed{4} for case~\textbf{B} if we contract $\Gamma_o$ to the point $\mathbf{q}$; see \cite{And1}.
\end{rem}

\begin{figure}
\centering
\includegraphics{./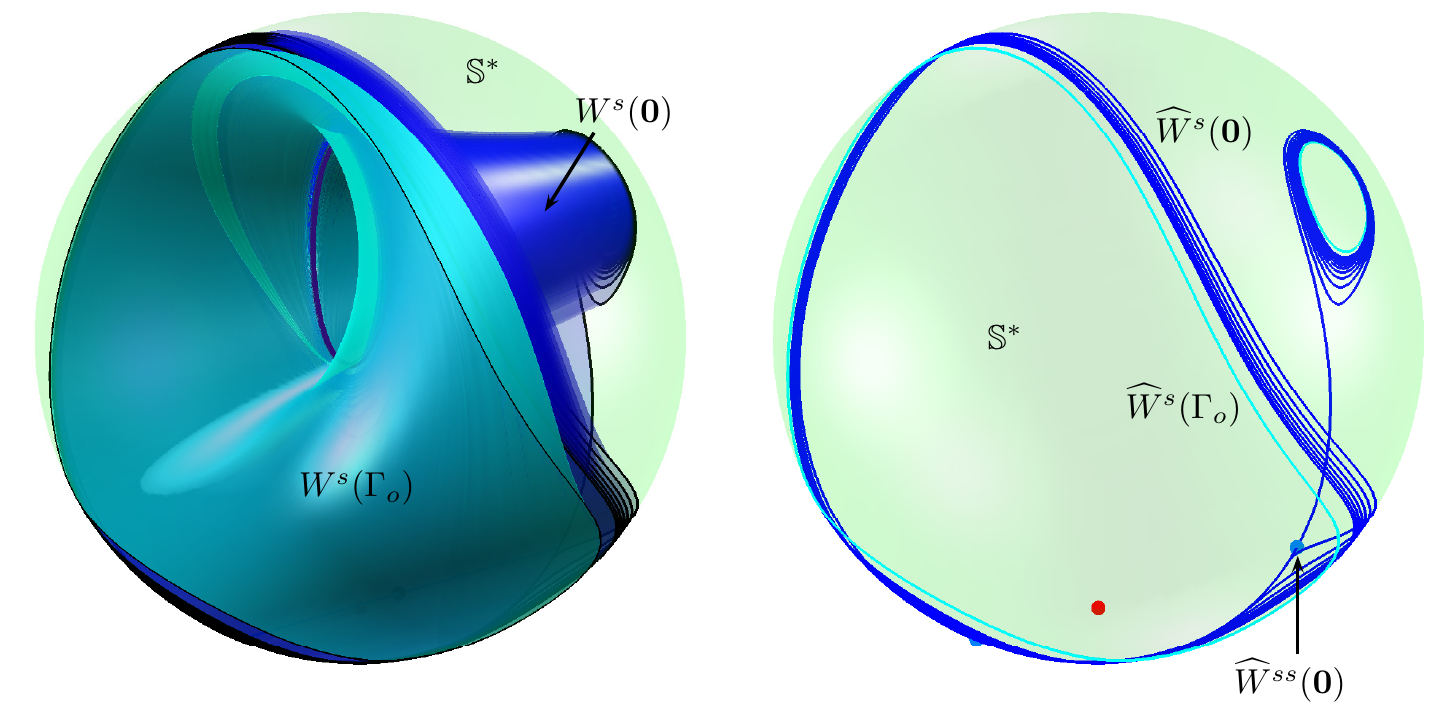}
\caption{ Illustration of the intersection of the stable manifolds of $\mathbf{0}$ and $\Gamma_o$ in system~\cref{eq:san} with the sphere $\mS^*$ of radius $0.6$ and center $(0.5,0,0)$; see also the accompanying animation ({\color{red} GKO\_Cflip\_animatedFig8.gif}). The left panel shows $W^s(\mathbf{0})$ (blue surface), $W^{ss}(\mathbf{0})$ (blue curve) and $W^s(\Gamma_o)$ (cyan surface) inside $\mS^*$ (green suface). The right panel shows only the corresponding intersection sets $\widehat{W}^s(\mathbf{0})$ (blue curve), $\widehat{W}^{ss}(\mathbf{0})$ (blue dots), and $\widehat{W}^s(\Gamma_o)$ (cyan curve) on $\mS^*$; the red dot is the stereographic projection point on $\mS^*$. The values of $\alpha$ and $\mu$ correspond to region $\mBlue{1_t}$ and are given in \cref{tab:Inc1}; compare with the corresponding panel in \cref{fig:FirstStereo}.} 
\label{fig:ExampleStereo}
\end{figure} 

\subsection{Intersection sets with a sphere}

We study now the intersection sets of the stable manifolds with the sphere $\mS^*:=\{x \in \R^{3} : \aNorm{x-c}=R\}$, where $c:=(c_x,c_y,c_z)=(0.5,0,0)$ and $R=0.6$. Since $\mS^*$ is a compact set, all intersections of the stable manifolds with $\mS^*$ are bounded. In particular, two-dimensional manifolds (in general position) lead to curves on $\mS^*$, while one-dimensional stable manifolds intersect $\mS^*$ in points. This is illustrated in \fref{fig:ExampleStereo} for the invariant manifolds in region $\mBlue{1_t}$. 

Next, we stereographically project these intersection sets onto one of the tangent planes of $\mS^*$ to present them as two-dimensional pictures. More precisely, we first apply the transformation
\begin{equation*}
\pi=\lp(\frac{u}{\aNorm{u}},\frac{v}{\aNorm{v}},\frac{w}{\aNorm{w}}\rp),
\end{equation*}
formed by the vectors 
\begin{equation*}
u=(0.5706,0.1854,0)^T, v=(1,-(u_x+u_z)/u_y,1)^T \text{ and } w=u \times v.
\end{equation*}
We rotate and translate points $(x,y,z)\in \mS^*$ to points $(x',y',z')$ on the sphere of radius $R=0.6$ centered at the origin, that is, $(x',y',z')=\pi(x-c)$. This transformation rotates the stereographic point, the red dot in \fref{fig:ExampleStereo}, so that it corresponds to the south pole of the transformed sphere. We then use stereographic projection from the south pole onto the tangent plane at the north pole, via the transformation
\begin{equation}\label{eq:steCaseC}
(x',y',z')  \mapsto \lp( \frac{Rx'}{R+z'} , \frac{Ry'}{R+z'} \rp).
\end{equation}
This particular stereographic projection was chosen to improve the visibility of certain features of the intersection sets specific to case~\textbf{C}.

\begin{figure}
\centering
\includegraphics{./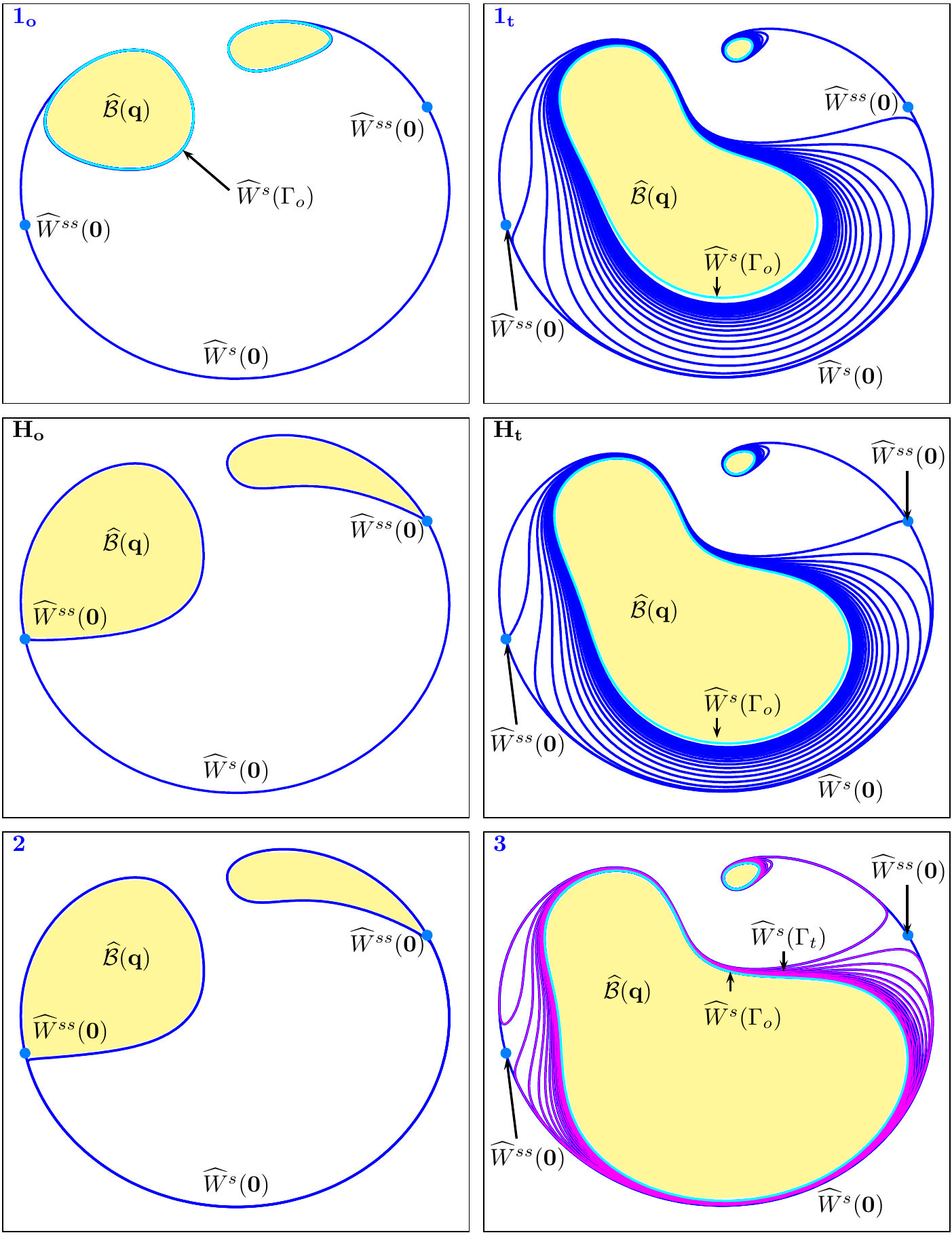}
\caption{Stereographic projections of the intersection sets of the invariant manifolds with $\mS^*$ in regions \mBlue{1}--\mBlue{3} and at the homoclinic bifurcations $\mathbf{H_o}$ and $\mathbf{H_t}$ near the inclination flip point $\mathbf{C_I}$. The left and right columns show the transition through the homoclinic bifurcations $\mathbf{H_o}$ and $\mathbf{H_t}$, respectively.  Shown are $\widehat{W}^s(\mathbf{0})$ (dark-blue curves), $\widehat{W}^{ss}(\mathbf{0})$ (light-blue dots), $\widehat{W}^s(\Gamma_o)$ (cyan curve), $\widehat{W}^s(\Gamma_t)$ (purple curve) and $\widehat{\mathcal{B}}(\mathbf{q})$ (shaded yellow region). See \cref{tab:Inc1} for the respective parameter values.} 
\label{fig:FirstStereo}
\end{figure}  

\Fref{fig:FirstStereo} shows the same series of phase portraits from \cref{fig:Easy} in terms of their intersection sets of stable manifolds with $\mS^*$. The left column of \cref{fig:FirstStereo} illustrates the transition through $\mathbf{H_o}$ and the right column through $\mathbf{H_t}$.  Starting from panel~\mBlue{1_o} in \fref{fig:FirstStereo}, note how the intersection set $\widehat{W}^s(\Gamma_o)$ (cyan curve) is made up of two topological circles that bound the disconnected basin $\widehat{\mathcal{B}}(\mathbf{q})$, that is, $\partial \widehat{\mathcal{B}}(\mathbf{q}) = \widehat{W}^s(\Gamma_o)$. The intersection $\widehat{W}^s(\mathbf{0})$ (blue curve) is a single curve that accumulates in a spiralling manner backward in time onto $\widehat{W}^s(\Gamma_o)$. The accumulation is as expected: the $\lambda$-lemma implies that the transversal heteroclinic orbit from $\Gamma_o$ to $\mathbf{0}$ causes $\widehat{W}^s(\textbf{0})$ to spiral around $\widehat{W}^s(\Gamma_o)$. Note that these phenomena are only observed if the sphere $\mS^*$ is chosen small enough. At the moment of the bifurcation, shown in panel $\mathbf{H_o}$, the curve $\widehat{W}^s(\textbf{0})$ closes back on itself along $\widehat{W}^{ss}(\textbf{0})$ and $\widehat{W}^s(\Gamma_o)$ disappears as $\Gamma_o$ becomes the homoclinic orbit $\Gamma_{\rm hom}$; the basin $\widehat{\mathcal{B}}(\mathbf{q})$ is now bounded by a subset of $\widehat{W}^s(\mathbf{0})$, that is, $\partial \widehat{\mathcal{B}}(\mathbf{q}) \subset \widehat{W}^s(\mathbf{0})$. Hence, at $\mathbf{H_o}$, the boundary of the basin of attraction of $\mathbf{q}$ is contained in $W^s(\mathbf{0})$. As soon as we enter region~\mBlue{2}, the entire intersection set $\widehat{W}^s(\mathbf{0})$ becomes the boundary of $\widehat{\mathcal{B}}(\mathbf{q})$, which is now a connected region that is topologically equivalent to an open disk; see panel~\mBlue{2}. 

In panel~\mBlue{1_t} of \fref{fig:FirstStereo}, one sees the same topological configuration as in panel~\mBlue{1_o}, because both phase portraits are from the same region; however, note how only one end of $\widehat{W}^s(\mathbf{0})$ gets close to both points in $\widehat{W}^{ss}(\mathbf{0})$ in panel~\mBlue{1_t}. At the nonorientable homoclinic bifurcation $\mathbf{H_t}$, the intersection set $\widehat{W}^s(\mathbf{0})$ connects back on itself at $\widehat{W}^{ss}(\mathbf{0})$ in a different way, as is illustrated in panel~$\mathbf{H_t}$ of \fref{fig:FirstStereo}. Here, $\widehat{W}^s(\mathbf{0})$ does not bound two open regions as in panel~$\mathbf{H_o}$. Instead, at $\mathbf{H_t}$ the two segments that form $\widehat{W}^s(\mathbf{0})$ accumulate onto the two topological circles $\widehat{W}^s(\Gamma_o)$ due to the persistence of the heteroclinic orbit from $\Gamma_o$ to $\mathbf{0}$.  The nature of $\widehat{\mathcal{B}}(\mathbf{q})$ and its boundary is unchanged from panel~\mBlue{1_o}. As one transitions to region~\mBlue{3}, the homoclinic orbit $\mathbf{\Gamma_{\rm hom}}$ becomes the periodic orbit $\Gamma_t$ and the intersection set $\widehat{W}^s(\Gamma_t)$ (purple curve) is a single curve that accumulates at both ends onto $\widehat{W}^s(\Gamma_o)$ backward in time, due to the existence of a heteroclinic orbit from $\Gamma_o$ to $\Gamma_t$.  As mentioned before, there are infinitely many heteroclinic orbits from $\Gamma_o$ to $\mathbf{0}$ in region \mBlue{3}. Since $\mS^*$ is a sufficiently small sphere that is transverse to $\widehat{W}^s(\mathbf{0})$, there exist infinitely many curves in $\widehat{W}^s(\mathbf{0})$. One curve in $\widehat{W}^s(\mathbf{0})$ accumulates onto a single topological circle of $\widehat{W}^s(\Gamma_o)$, and all other cuves accumulate onto both topological circles of $\widehat{W}^s(\Gamma_o)$. These sets of curves get arbitrary close to $\widehat{W}^s(\Gamma_t)$ from both sides; we say that $\widehat{W}^s(\Gamma_t)$ is a \textbf{geometric accumulation} curve of $\widehat{W}^s(\mathbf{0})$. In panel~\mBlue{3} of \fref{fig:FirstStereo} we only show three of these infinitely many intersection curves of $\widehat{W}^s(\mathbf{0})$, namely, the outer curve that accumulates onto the single topological circle of $\widehat{W}^s(\Gamma_o)$, and two other curves that track $\widehat{W}^s(\Gamma_t)$ on both sides. We remark that the intersection sets shown in \fref{fig:FirstStereo} are homotopic to the intersection sets shown in \cite{And1} for the transition through $\mathbf{H_o}$ and $\mathbf{H_t}$ in case~\textbf{B}, provided one contracts $\widehat{W}^{ss}(\Gamma^a_o)$ to $\widehat{W}^s(\mathbf{q})$ through $\mathbf{H_o}$ and blows up $\widehat{W}^s(\mathbf{q})$ to a closed curve through $\mathbf{H_t}$ in case~\textbf{B}. 

The fact that $\Gamma_o\rightarrow \Gamma_t \rightarrow \mathbf{0}$ implies the existence of infinitely many structurally stable heteroclinic orbits from $\Gamma_o$ to $\mathbf{0}$ is going to be a recurrent phenomenon for different equilibria and saddle periodic orbits. For ease of exposition, we summarize the consequences in general terms:
\begin{prop} \label{rem:lam}
Let $A, B$ and $C$ be hyperbolic saddle equilibria or periodic orbits of system~\cref{eq:san}. If $A \rightarrow B \rightarrow C$ then there exist infinitely many structurally stable orbits from $A$ to $C$. Furthermore, if $W^s(A)$ intersects $\mS^*$ transversally then there exist infinitely many curves in $\widehat{W}^s(C)$. Each curve in $\widehat{W}^s(C)$ accumulates onto $\widehat{W}^s(A)$ backward in time, and $\widehat{W}^s(B)$ is a geometric accumulation curve of $\widehat{W}^s(C)$. 
\end{prop}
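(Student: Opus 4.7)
The plan is to prove the proposition as a consequence of the $\lambda$-lemma (inclination lemma) applied twice in succession, first to the $B\to C$ connection and then to the $A\to B$ connection, working backward in time.

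First I would recall the $\lambda$-lemma for hyperbolic saddle equilibria and periodic orbits of a flow: if $P$ is such a saddle and $D$ is a disk transverse to $W^u(P)$ at some point, then the backward iterates $\phi^{-t}(D)$ contain disks that converge in the $C^1$ topology to any prescribed compact piece of $W^s(P)$ as $t\to\infty$. Applying this to the transverse heteroclinic intersection $W^u(B)\pitchfork W^s(C)$: take a disk $D\subset W^s(C)$ at a point of this intersection that is transverse to $W^u(B)$. Because $W^s(C)$ is flow-invariant, $\phi^{-t}(D)\subset W^s(C)$ for all $t>0$, and by the $\lambda$-lemma these backward-flowed disks $C^1$-accumulate onto any compact piece of $W^s(B)$. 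In particular, $W^s(C)$ contains pieces arbitrarily $C^1$-close to $W^s(B)$ near the saddle $B$. The same argument applied to the connection $A\to B$ yields that $W^s(B)$ contains pieces arbitrarily $C^1$-close to $W^s(A)$.

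Composing these two approximations, I obtain pieces of $W^s(C)$ that lie arbitrarily $C^1$-close to $W^s(A)$ near $A$. Since $W^u(A)\pitchfork W^s(B)$ transversally along the $A\to B$ heteroclinic, every surface that is sufficiently $C^1$-close to $W^s(B)$ in a neighborhood of that intersection must also meet $W^u(A)$ transversally; a $\lambda$-lemma accumulation of $W^s(C)$ onto $W^s(B)$ therefore produces a sequence of transverse intersection points of $W^s(C)$ with $W^u(A)$, accumulating onto the $A\to B$ heteroclinic orbit. Each such transverse intersection is a structurally stable heteroclinic orbit from $A$ to $C$, establishing the first assertion.

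For the statements about $\mS^*$, I would use that $C^1$-closeness of two-dimensional manifolds is preserved under transverse intersection with a smooth surface. Under the hypothesis $W^s(A)\pitchfork\mS^*$, any piece of $W^s(C)$ that is $C^1$-close to $W^s(A)$ intersects $\mS^*$ transversally in a curve $C^1$-close to a piece of $\widehat W^s(A)$. Applying this to the sequence produced above yields infinitely many distinct intersection curves in $\widehat W^s(C)$, each one lying close to $\widehat W^s(A)$; following these curves along the backward flow on $W^s(C)$ shows they accumulate onto $\widehat W^s(A)$. The same $\lambda$-lemma argument applied only to the $B\to C$ connection gives that the pieces of $W^s(C)$ constructed near $W^s(B)$ flank $W^s(B)$ on both sides and converge to it in $C^1$, so the corresponding intersection curves with $\mS^*$ cluster on both sides of $\widehat W^s(B)$, making it a geometric accumulation curve of $\widehat W^s(C)$ in the sense introduced before the proposition.

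The main subtlety, and the step I expect to be the most delicate, is controlling the $\lambda$-lemma simultaneously for the two different saddles when one or both of $A,B,C$ are periodic orbits: one must work in a tubular neighborhood of each orbit where the Poincaré return map version of the inclination lemma applies, and then glue the resulting $C^1$ estimates along the compact pieces of the heteroclinic orbits. Once that bookkeeping is in place, the transfer of the accumulation statement from phase space to the sphere $\mS^*$ is immediate from the hypothesis of transversality and the openness of transverse intersection.
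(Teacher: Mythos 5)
Your proposal is correct and follows essentially the same route as the paper, whose proof is a one-line appeal to the $\lambda$-lemma (citing the analogous argument for $\mathbf{q}\rightarrow\Gamma_t\rightarrow\mathbf{0}$ in the case~\textbf{B} study); your write-up simply makes explicit the double application of the inclination lemma and the transfer to $\mS^*$ via openness of transversality. No substantive difference in approach, and no gap.
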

\begin{proof}
The proof follows from the $\lambda$-lemma and is a variation of the proof given in \cite{And1} for the case $\mathbf{q} \rightarrow \Gamma_t \rightarrow \mathbf{0}$.
\end{proof}

\section{Cascades of homoclinic and  heteroclinic bifurcations}
\label{sec:Cascades}

We now focus our attention on the invariant manifolds of system~\cref{eq:san} during the cascades of homoclinic and heteroclinic bifurcations, which we study and illustrate along the line $\alpha=0.5$ in the $(\alpha,\mu)$-plane. 

\subsection{Transition through $\mathbf{^2H_t}$}

\begin{figure}
\centering
\includegraphics{./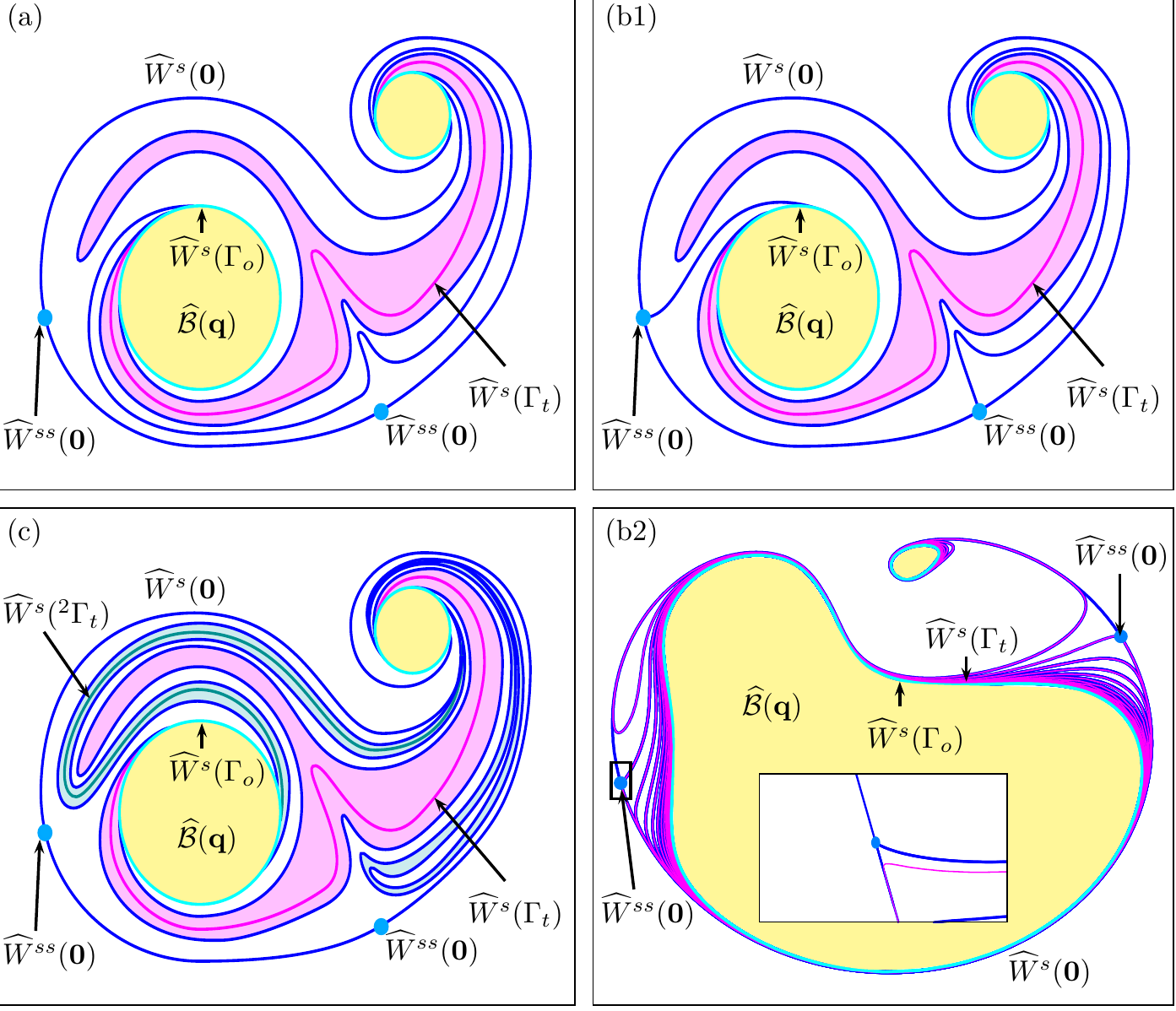}
\caption{Stereographic projection of the intersection sets of the invariant manifolds with $\mS^*$ before, at, and after the moment of the codimension-one homoclinic bifurcation $\mathbf{^2H_t}$ in panels~(a), (b1) and (b2), and (c), respectively.  Panel (a) shows a sketch of the stereographic projection in region~\mBlue{3}; panels~(b1) and (b2) show a sketch and the computed projections of system~\cref{eq:san} at $\mathbf{^2H_t}$, respectively; and panel (c) shows a sketch after $\mathbf{^2H_t}$.  The color code and labeling of the regions is as in \cref{fig:FirstStereo}. Panel~(b2) is for parameter values $(\alpha, \mu)=(0.5, -0.002880267)$.} 
\label{fig:SecondHt}
\end{figure}  

Starting from region~\mBlue{3}, we first encounter the codimension-one nonorientable homoclinic bifurcation $^2\mathbf{H_t}$ which is shown in \fref{fig:SecondHt} as a stereographic projection of the stable invariant objects on $\mS^*$ that exist in phase space. Panel~(a) shows a sketch of the intersection sets in region~\mBlue{3} shown in \fref{fig:FirstStereo}. The pink region indicates the relative location of the infinitely many curves in $\widehat{W}^s(\mathbf{0})$ that accumulate geometrically onto $\widehat{W}^s(\Gamma_t)$. At the moment of the codimension-one homoclinic bifurcation $^2\mathbf{H_t}$, the outer-most curve in $\widehat{W}^s(\mathbf{0})$ that connects both topological circles of $\widehat{W}^s(\Gamma_o)$ splits into two curves that close on $\widehat{W}^s(\mathbf{0})$ at the two points $\widehat{W}^{ss}(\mathbf{0})$; see the sketch in panel~(b1) of \cref{fig:SecondHt} and the associated computed phase portrait on $\mS^*$ in panel~(b2). The bifurcation $^2\mathbf{H_t}$ gives rise to the saddle periodic orbit $^2\Gamma_t$, and we find the following transversal connections: $\Gamma_o\rightarrow {^2\Gamma_t} \rightarrow \mathbf{0}$, $\Gamma_o\rightarrow {\Gamma_t} \rightarrow {^2\Gamma_t}$ and $\Gamma_o\rightarrow {^2\Gamma_t} \rightarrow \Gamma_t$. Panel~(c) illustrates the situation by way of a representative sketch. Note how the two curves of $\widehat{W}^s(\mathbf{0})$ that end at $\widehat{W}^{ss}(\mathbf{0})$ in panel~(b1) give rise to the two light-green regions, which represent the accumulation onto $\widehat{W}^s(^2\Gamma_t)$ (dark-cyan curves) of infinitely many curves in $\widehat{W}^s(\mathbf{0})$ and $\widehat{W}^s(\Gamma_t)$ due to the existence of $\Gamma_o\rightarrow {^2\Gamma_t} \rightarrow \mathbf{0}$ and $\Gamma_o\rightarrow {^2\Gamma_t} \rightarrow {\Gamma_t}$; we refer to \cref{rem:lam}. Both circles in $\widehat{W}^{s}(\Gamma_o)$ bound one of these two regions, but only one circle in $\widehat{W}^{s}(\Gamma_o)$ bounds the other region. Furthermore, since $\Gamma_o\rightarrow {\Gamma_t} \rightarrow {\mathbf{0}}$ and $\Gamma_o\rightarrow {\Gamma_t} \rightarrow {^2\Gamma_t}$, the pink region must contain infinitely many curves of $\widehat{W}^{s}(^2\Gamma_t)$, and infinitely many curves in $\widehat{W}^{s}(\mathbf{0})$ also accumulate geometrically onto these curves.  The existence of both pink and light-green regions is important. As $\mu$ decreases, the outer-most curve from the region that is closest to $\widehat{W}^{ss}(\mathbf{0})$ will meet $\widehat{W}^{ss}(\mathbf{0})$ at a certain value $\mu^*$; depending on whether this is a curve from $\widehat{W}^{s}(\mathbf{0})$, $\widehat{W}^{s}(\Gamma_t)$ or $\widehat{W}^{s}(^2\Gamma_t)$, system~\cref{eq:san} exhibits a particular corresponding codimension-one homoclinic or heteroclinic bifurcation. Furthermore, as the infinitely many curves in $\widehat{W}^{s}(\mathbf{0})$ accumulate onto either $\widehat{W}^{s}(\Gamma_t)$ or $\widehat{W}^{s}(^2\Gamma_t)$, infinitely many homoclinic bifurcations must occur before a codimension-one heteroclinic bifurcation can take place. Whenever a homoclinic bifurcation occurs, the above sequence of different kinds of bifurcations occurs again; that is, an additional saddle periodic orbit is created, accompanied by two regions of accumulation with respect to the stable manifold of the new saddle periodic orbit. Moreover, an accumulation of intersection curves from this stable manifold is created inside the already existing regions. In addition, the closer $\widehat{W}^{s}(\mathbf{0})$ lies to $\widehat{W}^{s}(\Gamma_t)$ or $\widehat{W}^{s}(^2\Gamma_t)$, the higher the number of rotations are for both the homoclinic orbit and the bifurcating saddle periodic orbit.

\subsection{Transition through $\myEtoP{\mathbf{0}}{\Gamma_t}{\Gamma_o}$}

\begin{figure}
\centering
\includegraphics{./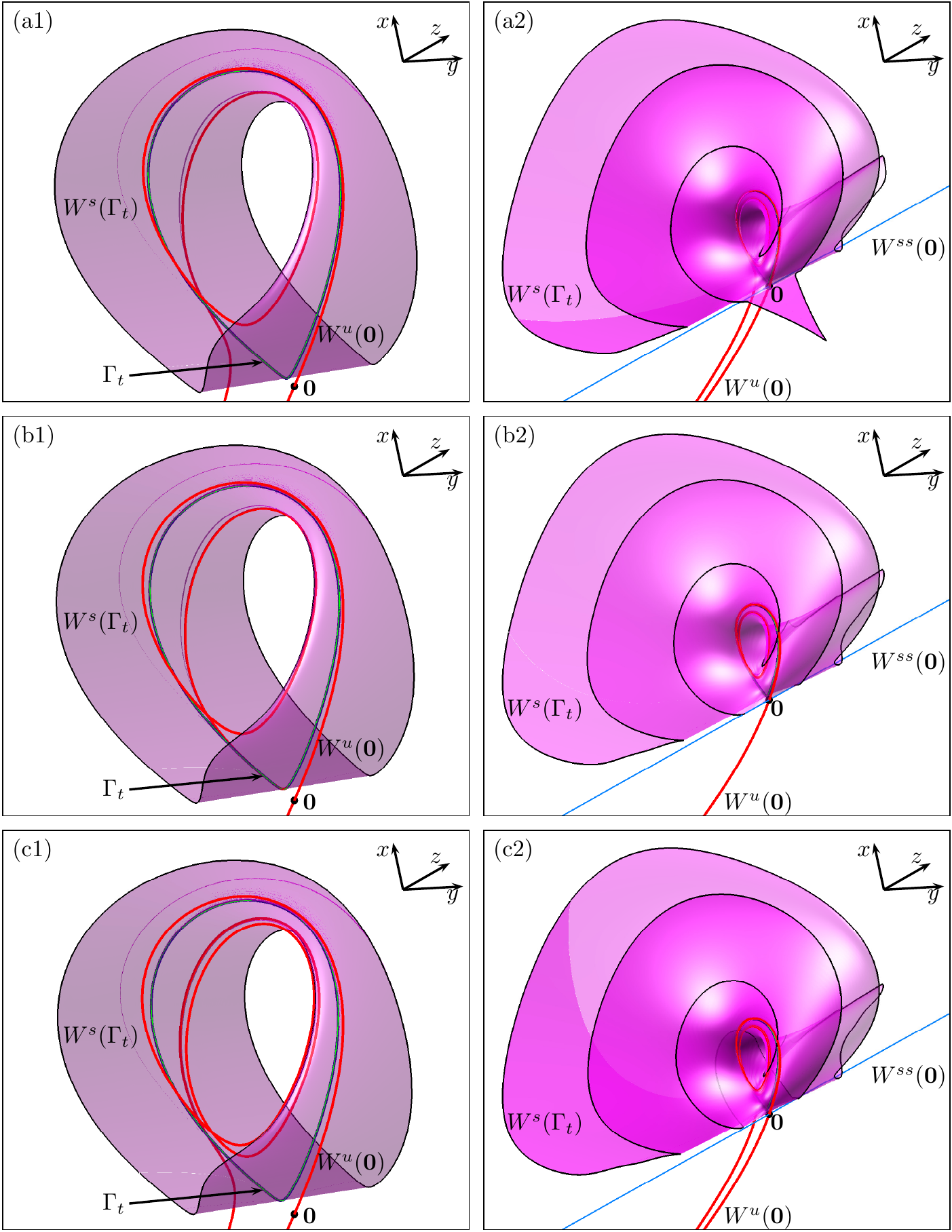}
\caption{The stable manifold $W^s(\Gamma_t)$ (purple surface) of the nonorientable saddle periodic orbit $\Gamma_t$ (green curve), together with $W^u(\mathbf{0})$ (red curve) and $W^{ss}(\mathbf{0})$ (blue curve) before, at, and after the moment of the codimension-one heteroclinic bifurcation $\myEtoP{\mathbf{0}}{\Gamma_t}{\Gamma_o}$ of system~\cref{eq:san} in rows~(a), (b) and (c), respectively.  The left column shows $W^s(\Gamma_t)$ in a tubular neighborhood of $\Gamma_t$, and the right column shows a larger part of $W^s(\Gamma_t)$.  Rows~(a), (b) and (c) are for $(\alpha, \mu)=(0.5, -0.002)$, $(\alpha, \mu)=(0.5, -0.002880324)$ and $(\alpha, \mu)=(0.5, -0.0035)$, respectively. See also the accompanying animation ({\color{red} GKO\_Cflip\_animatedFig11.gif}).} 
\label{fig:tranEtoPNon} 
\end{figure}  

We illustrate in \cref{fig:tranEtoPNon} the transition through the codimension-one heteroclinic bifurcation $\myEtoP{\mathbf{0}}{\Gamma_t}{\Gamma_o}$ and its effect on the reorganization of the stable manifold $W^s(\Gamma_t)$ (purple surface) in phase space. The left column shows $W^s(\Gamma_t)$ in a tubular neighborhood of $\Gamma_t$, showing that $W^s(\Gamma_t)$ is topologically a M\"obius band. The right column~(2) shows a larger part of $W^s(\Gamma_t)$. Panels~(a1) and (a2) show $W^s(\Gamma_t)$, at $\mu=-0.002$, before the bifurcation $\myEtoP{\mathbf{0}}{\Gamma_t}{\Gamma_o}$ and the associated first homoclinic cascade; panels~(b1) and (b2) are at the moment of the bifurcation, when $\mu \approx -0.002880324$; and panels~(a3) and (b3) are for $\mu=-0.0035$, in region~\mBlue{4} past the first cascade; see \cref{fig:Second}.

We first focus on the local behavior in a tubular neighborhood. In panel~(a1), the unstable manifold $W^u(\mathbf{0})$ (red curve) makes one rotation close to $\Gamma_o$ and one close to $\Gamma_t$ and then escapes to infinity. As discussed before, a sequence of homoclinic bifurcations must occur before the bifurcation $\myEtoP{\mathbf{0}}{\Gamma_t}{\Gamma_o}$ is exhibited by system~\cref{eq:san}. During this sequence, the unstable manifold $W^u(\mathbf{0})$ gains turns around $W^s(\Gamma_t)$, thus increasing $\zeta$, until its limiting case at the bifurcation $\myEtoP{\mathbf{0}}{\Gamma_t}{\Gamma_o}$. However, note how the first loop of $W^u(\mathbf{0})$ is still close to $\Gamma_o$ before accumulating onto $\Gamma_t$; this situation is the same as in panel~(a1) of \fref{fig:ConfigHet}.  After $\myEtoP{\mathbf{0}}{\Gamma_t}{\Gamma_o}$, a sequence of homoclinic bifurcations occur which decreases the winding number of $W^u(\mathbf{0})$ around $W^s(\Gamma_t)$, thus decreasing $\zeta$. Finally in region~\mBlue{4}, the unstable manifold $W^u(\mathbf{0})$ makes two rotation close to $\Gamma_o$ and one again close to $\Gamma_t$ before escaping to infinity; see panel~(c1). In particular, the orientation of the homoclinic bifurcation of $\mathbf{0}$ that follows, depends on how $W^u(\mathbf{0})$ passes near $\Gamma_o$ and $\Gamma_t$. 

We gain a deeper understanding by studying a larger portion of $W^s(\Gamma_t)$ as shown in the right column of \fref{fig:tranEtoPNon}. Panel~(a2) illustrates how $W^s(\Gamma_t)$ spirals around $W^s(\Gamma_o)$ due to the presence of heteroclinic orbits from $\Gamma_o$ to $\Gamma_t$; the outer rim of the computed part of $W^s(\Gamma_t)$ is highlighted as the black curve. Note how part of $W^s(\Gamma_t)$ gets close to $W^{ss}(\mathbf{0})$ before starting to spiral again. In panel~(b2), this part of $W^s(\Gamma_t)$ has disappeared, and the existence of the heteroclinic orbit $\myEtoP{\mathbf{0}}{\Gamma_t}{\Gamma_o}$ forces $W^s(\Gamma_t)$ to accumulate onto $W^{ss}(\mathbf{0})$ backward in time.  After the bifurcation $\myEtoP{\mathbf{0}}{\Gamma_t}{\Gamma_o}$, the manifold $W^s(\Gamma_t)$ no longer accumulates onto $W^{ss}(\mathbf{0})$; see panel~(c2). Observe that $W^s(\Gamma_t)$ in panel~(c2) seems identical to $W^s(\Gamma_t)$ in panel~(b2), but it exhibits one additional loop.  This extra loop would spread out if a much larger portion of $W^s(\Gamma_t)$ were computed, and it would get close to $W^{ss}(\mathbf{0})$ in the same way as in panel~(a2).

\begin{figure}
\centering
\includegraphics{./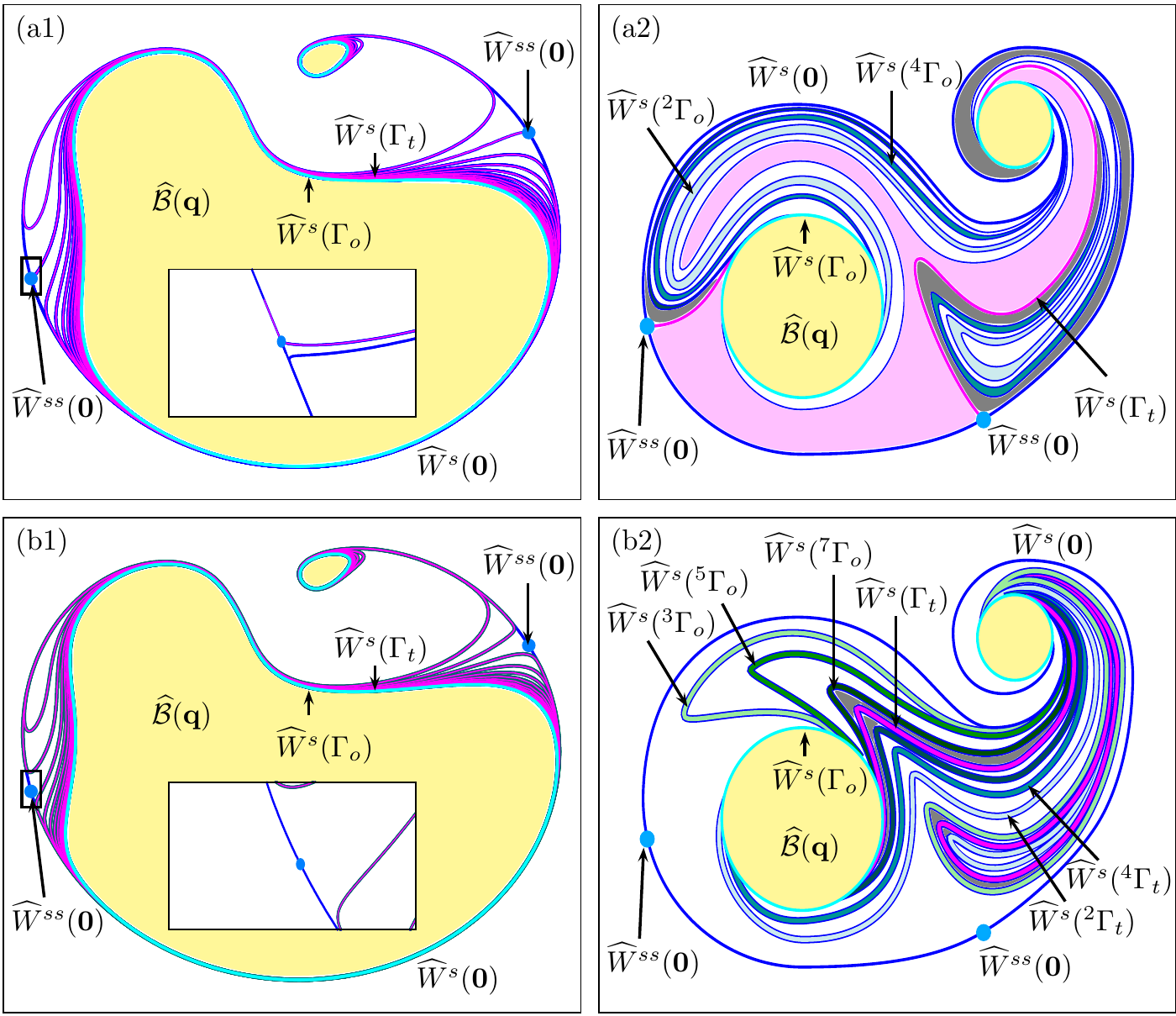}
\caption{Stereographic projection of the intersection sets of the invariant manifolds with $\mS^*$ at the codimension-one heteroclinic bifurcation $\mathbf{Q}^{\Gamma_t}_{\mathbf{0}}$, in region~\mBlue{4}, and at the codimension-one heteroclinic bifurcation $\mathbf{Q}^{\Gamma_o}_{\mathbf{0}}$ in rows~(a) and (b), respectively.  The left column shows the computed projections and the right column corresponding sketches.  The color code and labeling of the regions is the same as in \cref{fig:FirstStereo}.  Panels~(a1), and (b1) are for $(\alpha, \mu)=(0.5, -0.002880324)$ and $(\alpha, \mu)=(0.5, -0.0035)$, respectively.} 
\label{fig:StereoEtoP} 
\end{figure}  

\Cref{fig:StereoEtoP} illustrates the transition to region~\mBlue{4} on the level of the intersection sets of the invariant objects with $\mS^*$. Panels~(a1) and (a2) are stereographic projections of the intersection sets with $\mS^*$ at the moment of the first codimension-one heteroclinic bifurcation $\myEtoP{\mathbf{0}}{\Gamma_t}{\Gamma_o}$ of $\Gamma_t$. At the moment of the bifurcation the curve $\widehat{W}^{s}(\Gamma_t)$ (purple) splits into two curves that meet at the two points in $\widehat{W}^{ss}(\mathbf{0})$; see panel~(a1). This phenomenon is illustrated more clearly in the sketch shown in panel~(a2), where the accumulation of curves in $\widehat{W}^s(\Gamma_t)$ is represented by the pink shading. As mentioned before, each homoclinic bifurcation creates a saddle periodic orbit that gives rise to two new accumulation regions in the intersection set with $\mS^*$; these are shaded in panel~(a2) with different tones of cyan and the label indicates the intersection curve of the principal stable manifold. Only two pairs of accumulation regions are indicated in panel~(a2).  The gray region represents the infinitely many regions created in further homoclinic bifurcations and specifies their relative location in the stereographic projection. It is clear from panels~(a1) and (a2) of \fref{fig:StereoEtoP} that, after crossing of $\myEtoP{\mathbf{0}}{\Gamma_t}{\Gamma_o}$, there is another cascade of homoclinic bifurcations that terminates on the homoclinic bifurcation associated with the blue curve in $\widehat{W}^s(\mathbf{0})$ that bounds the pink region in panel~(a2); this last homoclinic bifurcation corresponds to the last bifurcation that system~\cref{eq:san} exhibits before transitioning into region~\mBlue{4} in \cref{fig:Second}. Panels~(b1) and (b2) in \cref{fig:StereoEtoP} show the situation in region~\mBlue{4} on the level of the stereographic projections.  In particular, each accumulation region is represented by the main intersection curve of the stable manifold of the saddle periodic orbit that exists on it. Observe how one of the curves of $\widehat{W}^s(\Gamma_t)$ and, thus, the corresponding accumulation region accumulate onto a single topological circle of $\widehat{W}^s(\Gamma_o)$, while the other curve accumulates on both topological circles of $\widehat{W}^s(\Gamma_o)$. This is true for each of the accumulation regions of the different saddle periodic orbits, while they accumulate at the same time onto $\widehat{W}^s(\Gamma_t)$ (gray region). 

\subsection{Transition through $\mathbf{Q}_{\mathbf{0}}^{\Gamma_o}$}

\begin{figure}
\centering
\includegraphics{./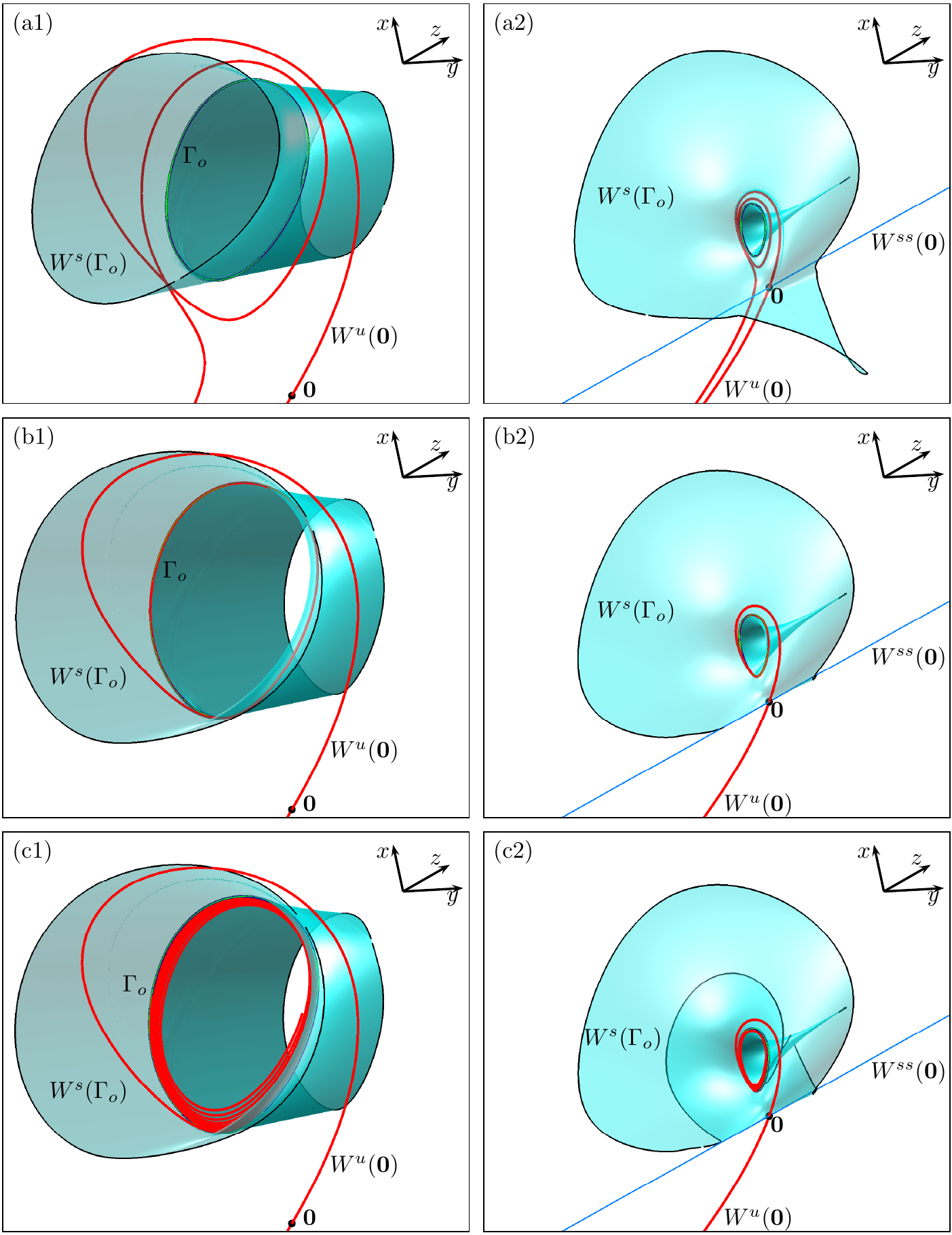}
\caption{The stable manifold $W^s(\Gamma_o)$ (cyan surface) of the orientable saddle periodic orbit $\Gamma_o$ (purple curve), together with $W^u(\mathbf{0})$ (red curve) and $W^{ss}(\mathbf{0})$ before, at, and after the moment of the codimension-one heteroclinic bifurcation $\mathbf{Q}^{\Gamma_o}_{\mathbf{0}}$ of system~\cref{eq:san} in rows~(a), (b) and (c), respectively.  The left column shows $W^s(\Gamma_t)$ in a tubular neighborhood of $\Gamma_t$, and the right column shows a larger part of $W^s(\Gamma_t)$.  Rows~(a), (b) and (c) are for $(\alpha, \mu)=(0.5, -0.002)$, $(\alpha, \mu)=(0.5, -0.004861805)$ and $(\alpha, \mu)=(0.5, -0.005)$, respectively. See also the accompanying animation ({\color{red} GKO\_Cflip\_animatedFig13.gif}).} 
\label{fig:tranEtoPOri}
\end{figure}  

The final step at the end of all cascades is the heteroclinic bifurcation $\mathbf{Q}_{\mathbf{0}}^{\Gamma_o}$. The two-dimensional stable manifold $W^s(\Gamma_o)$ does not interact with any saddle invariant object, and it retains its topological properties during the cascades of homoclinic and heteroclinic bifurcations. However, this changes as soon as $W^s(\Gamma_o)$ intersects $W^u(\mathbf{0})$ at the bifurcation $\mathbf{Q}_{\mathbf{0}}^{\Gamma_o}$, which is illustrated in \fref{fig:tranEtoPOri}.  Specifically, the left column shows $W^s(\Gamma_o)$ (cyan surface), together with $W^u(\mathbf{0})$ (red curve), in a tubular neighborhood of $\Gamma_o$ and the right column shows larger parts of these manifolds. Panels~(a1) and (a2) show, at $\mu=-0.002$, the situation before the bifurcation $\mathbf{Q}_{\mathbf{0}}^{\Gamma_o}$, that is, during the homoclinic and heteroclinic cascades; panels~(b1) and (b2) are for $\mu \approx -0.004861805$ at the moment of the bifurcation $\mathbf{Q}^{\Gamma_o}_{\mathbf{0}}$; and panels~(a3) and (b3) are for $\mu=-0.005$, after the bifurcation. It seems that, in the vicinity of $\Gamma_o$, the transition through $\mathbf{Q}_{\mathbf{0}}^{\Gamma_o}$ does not manifest itself as any topological change for $W^s(\Gamma_o)$, apart from its relative position with respect to $W^u(\mathbf{0})$. Hence, in panels~(a1), (b1) and (c1), one sees $W^u(\mathbf{0})$ outside, at, and inside the topological cylinder $W^s(\Gamma_o)$, respectively.  The difference becomes apparent only when we consider a larger portion of phase space. In panel~(a2), the stable manifold $W^s(\Gamma_o)$ is still a topological cylinder, but note how part of $W^s(\Gamma_o)$ lies close to $W^{ss}(\mathbf{0})$. The moment $\mathbf{Q}_{\mathbf{0}}^{\Gamma_o}$ occurs, in panel~(b2), one trajectory of the unstable manifold manifold $W^u(\mathbf{0})$ lies inside $W^s(\Gamma_o)$ which forces $W^s(\Gamma_o)$ to accumulate onto $W^{ss}(\mathbf{0})$ backward in time. After the bifurcation $\mathbf{Q}_{\mathbf{0}}^{\Gamma_o}$, as shown in panel~(c2) of \cref{fig:tranEtoPOri}, the lower half of $W^s(\Gamma_o)$ starts to spiral around its upper half. The spiralling accumulation is due to the existence of structurally stable homoclinic orbits of $\Gamma_o$. Note in panel~(c2) that one branch of $W^u(\mathbf{0})$ lies in the basin $\mathcal{B}(\mathbf{q})$ and the other one lies in the positive invariant set $V$, as defined in \cref{sec:topInv}. Hence, the winding number $\zeta$ is infinite and there cannot be any homoclinic or heteroclinic bifurcations occurring after $\mathbf{Q}_{\mathbf{0}}^{\Gamma_o}$. This implies that $\mathbf{Q}_{\mathbf{0}}^{\Gamma_o}$ marks the end of the homoclinic and heteroclinic cascade in the $(\alpha,\mu)$-plane. 

\subsection{Computation of bifurcation sequences}
\label{sec:CompCascades}

To determine different homoclinic and heteroclinic cascades, and to understand their organization in the parameter plane, we computed representative bifurcation points and their corresponding $\mu$-values along the line $\alpha=0.5$.  These are listed in \cref{tab:Sequence}, which also lists the sequence of further bifurcations past these cascades. We first discuss the global bifurcations up to $\mathbf{Q}_{\mathbf{0}}^{\Gamma_o}$ and then the remaining bifurcations.

As $\mu$ decreases from the primary homoclinic bifurcation $\mathbf{H_t}$ at $\mu=0$, we encounter homoclinic and heteroclinic cascades in a particular order, followed by saddle-node and period-doubling bifurcations of periodic orbits. As mentioned before, a saddle periodic orbit is created for each homoclinic bifurcation that occurs as $\mu$ decreases; e.g., the saddle periodic orbits $\Gamma_t$ and $^2\Gamma_t$ are created from the homoclinic bifurcations $\mathbf{H_t}$ and $^2\mathbf{H_t}$, respectively.  We now employ a similar notation as used for the heteroclinic bifurcations to denote the subsequent homoclinic bifurcations where the corresponding orbit makes rotations around certain saddle periodic orbits before accumulating backward and forward in time to $\mathbf{0}$. For ease of exposition, we append this information in their corresponding labels. For example, $\myHom{o}{n\Gamma_o,m\Gamma_t}$ represents the codimension-one orientable homoclinic bifurcation where the corresponding orbit makes $n$ turns around $\Gamma_o$ and then $m$ turns around $\Gamma_t$ before accumulating forward in time to $\mathbf{0}$. In particular, the homoclinic bifurcations $\myHom{o}{n\Gamma_o,m\Gamma_t}$ or $\myHom{t}{n\Gamma_o,m\Gamma_t}$, for $n, m \in \N$, in \cref{tab:Sequence} create saddle periodic orbits that make $n+m+1$ loops (the first rotation cannot be assigned to any saddle periodic orbit) and have the same orientation as the homoclinic bifurcation. Heuristically, one can get information about the orientation of the homoclinic bifurcation by looking at the parity of $m$, that is, the number of times the corresponding orbit of the homoclinic bifurcation rotates around $\Gamma_t$ in phase space before converging to $\mathbf{0}$. In general the first turn changes the orientation of the homoclinic bifurcation to nonorientable, and every time a rotation occurs around $\Gamma_t$, the orientation swaps; hence, for odd $m$ the corresponding bifurcation is nonorientable and for even $m$ it is orientable. For example, observe in \cref{tab:Sequence} that the homoclinic cascade of the form $\myHom{t}{\Gamma_o,m\Gamma_t}$, with $m$ even, increases in the number of rotations around $\Gamma_t$ until it accumulates onto $\myEtoP{\mathbf{0}}{\Gamma_t}{\Gamma_o}$; the number of rotations then starts decreasing, with $m$ odd, up to $\myHom{o}{\Gamma_o,\Gamma_t}$.  In particular, $\myHom{o}{\Gamma_o,\Gamma_t}$ is the last bifurcation that occurs before transitioning into region~\mBlue{4}. Note that between $\myHom{o}{\Gamma_o,\Gamma_t}$ and $\myHom{t}{2\Gamma_o}$ there is a larger gap in $\mu$ compared to the previous homoclinic bifurcations.  This is due to the termination and the start of new homoclinic and heteroclinic cascades, respectively. In particular, these two homoclinic bifurcations are related as they create the saddle periodic orbits $^3\Gamma_o$ and $^3\Gamma_t$, which disappear in the saddle-node bifurcation $\mathbf{SNP}_{^3\Gamma_o}$.  Because $\zeta$ remains constant at $\zeta=3$, we also have numerical evidence that in this gap there cannot exist additional homoclinic or heteroclinic bifurcations.  

\begin{table}
\centering
\setlength\extrarowheight{0.75mm}
\begin{tabular}{|l|c|c|l|c|}
\cline{1-2} \cline{4-5}
\textbf{Bifurcation}                                & $\mathbf{\mu}\times [10^{-3}]$                       & \quad & \textbf{Bifurcation}                     & $\mathbf{\mu}\times [10^{-3}]$                       \\ \cline{1-2} \cline{4-5} 
{\color[HTML]{A0522D} $\mathbf{H_t}$}             &   {\color[HTML]{A0522D} 0.0} &           & {\color[HTML]{00A0A0} $\myHom{t}{5\Gamma_o}$}                        & {\color[HTML]{00A0A0} -4.619987} \\ \cline{1-2} \cline{4-5} 
{\color[HTML]{00A0A0} $\mathbf{^2H_t}$}             & {\color[HTML]{00A0A0} -2.880268} &           & {\color[HTML]{9B9B9B} $\myEtoP{\mathbf{0}}{^2\Gamma_t}{5\Gamma_o}$} &{\color[HTML]{9B9B9B}  -4.619993}\\ \cline{1-2} \cline{4-5} 
{\color[HTML]{00A0A0} $\myHom{t}{\Gamma_o,2\Gamma_t}$}     & {\color[HTML]{00A0A0} -2.880324} &           & {\color[HTML]{FF00FF} $\myEtoP{\mathbf{0}}{\Gamma_t}{5\Gamma_o}$} & {\color[HTML]{FF00FF} -4.620120}\\ \cline{1-2} \cline{4-5} 
{\color[HTML]{00A0A0} $\myHom{t}{\Gamma_o,4\Gamma_t}$}     & {\color[HTML]{00A0A0} -2.880324} &           & {\color[HTML]{00A0A0} $\myHom{o}{5\Gamma_o,\Gamma_t}$}  & {\color[HTML]{00A0A0} -4.620128}\\ \cline{1-2} \cline{4-5} 
{\color[HTML]{00A0A0} $\myHom{t}{\Gamma_o,6\Gamma_t}$}     & {\color[HTML]{00A0A0} -2.880324} &           & {\color[HTML]{00A0A0} $\myHom{t}{6\Gamma_o}$}    & {\color[HTML]{00A0A0} -4.704132}\\ \cline{1-2} \cline{4-5} 
{\color[HTML]{00A0A0} $\myHom{t}{\Gamma_o,8\Gamma_t}$}     & {\color[HTML]{00A0A0} -2.880324} &           & {\color[HTML]{00A0A0} $\myHom{o}{6\Gamma_o,\Gamma_t}$} & {\color[HTML]{00A0A0} -4.704235} \\ \cline{1-2} \cline{4-5} 
{\color[HTML]{FF00FF} $\myEtoP{\mathbf{0}}{\Gamma_t}{\Gamma_o}$} & {\color[HTML]{FF00FF} -2.880324} &           & {\color[HTML]{00A0A0} $\myHom{t}{7\Gamma_o}$}  & {\color[HTML]{00A0A0} -4.757563}\\ \cline{1-2} \cline{4-5} 
{\color[HTML]{00A0A0} $\myHom{o}{\Gamma_o,7\Gamma_t}$}     & {\color[HTML]{00A0A0} -2.880324} &           & {\color[HTML]{00A0A0} $\myHom{o}{7\Gamma_o,\Gamma_t}$}     & {\color[HTML]{00A0A0} -4.757636} \\ \cline{1-2} \cline{4-5} 
{\color[HTML]{00A0A0} $\myHom{o}{\Gamma_o,5\Gamma_t}$}     & {\color[HTML]{00A0A0} -2.880324}&           & {\color[HTML]{00A0A0} $\myHom{t}{8\Gamma_o}$}   & {\color[HTML]{00A0A0} -4.792227}\\ \cline{1-2} \cline{4-5} 
{\color[HTML]{00A0A0} $\myHom{o}{\Gamma_o,3\Gamma_t}$}     & {\color[HTML]{00A0A0} -2.880324} &           & {\color[HTML]{00A0A0} $\myHom{o}{8\Gamma_o,\Gamma_t}$}  & {\color[HTML]{00A0A0} -4.792278}\\ \cline{1-2} \cline{4-5} 
{\color[HTML]{00A0A0} $\myHom{o}{\Gamma_o,\Gamma_t}$}     & {\color[HTML]{00A0A0} -2.880325} &           & {\color[HTML]{00A0A0} $\myHom{t}{9\Gamma_o}$}    & {\color[HTML]{00A0A0} -4.815051}\\ \cline{1-2} \cline{4-5} 
{\color[HTML]{00A0A0} $\myHom{t}{2\Gamma_o}$}             & {\color[HTML]{00A0A0} -3.816057} &           & {\color[HTML]{800080} $\mathbf{Q}_{\mathbf{0}}^{\Gamma_o} $} & {\color[HTML]{800080} -4.861805}\\ \cline{1-2} \cline{4-5} 
{\color[HTML]{9B9B9B} $\myEtoP{\mathbf{0}}{^2\Gamma_t}{2\Gamma_o}$} & {\color[HTML]{9B9B9B} -3.816058} &           &{\color[HTML]{3531FF}  $\mathbf{F}$}                        & {\color[HTML]{3531FF} -7.054355} \\ \cline{1-2} \cline{4-5} 
{\color[HTML]{FF00FF} $\myEtoP{\mathbf{0}}{\Gamma_t}{2\Gamma_o}$}         & {\color[HTML]{FF00FF} -3.816233} &           & {\color[HTML]{6665CD} $\mathbf{Tan}_{\Gamma_o}$}   & {\color[HTML]{6665CD} -7.076705} \\ \cline{1-2} \cline{4-5} 
{\color[HTML]{9B9B9B} $\myEtoP{\mathbf{0}}{^2\Gamma_t}{2\Gamma_o,\Gamma_t}$} & {\color[HTML]{9B9B9B} -3.816233} &           & {\color[HTML]{009901} $\mathbf{SNP}_{^{10}\Gamma_o}$} & {\color[HTML]{009901} -7.077572} \\ \cline{1-2} \cline{4-5} 
{\color[HTML]{00A0A0} $\myHom{o}{2\Gamma_o,\Gamma_t}$}     & {\color[HTML]{00A0A0} -3.816234} &           & {\color[HTML]{009901} $\mathbf{SNP}_{^9\Gamma_o}$} & {\color[HTML]{009901} -7.078122} \\ \cline{1-2} \cline{4-5} 
{\color[HTML]{00A0A0} $\myHom{t}{3\Gamma_o}$}             & {\color[HTML]{00A0A0} -4.249463} &           & {\color[HTML]{009901} $\mathbf{SNP}_{^7\Gamma_o}$} & {\color[HTML]{009901} -7.080554} \\ \cline{1-2} \cline{4-5} 
{\color[HTML]{9B9B9B} $\myEtoP{\mathbf{0}}{^2\Gamma_t}{3\Gamma_o}$}         & {\color[HTML]{9B9B9B} -4.249465} &           &{\color[HTML]{009901} $\mathbf{SNP}_{^6\Gamma_o}$} & {\color[HTML]{009901} -7.083202}  \\ \cline{1-2} \cline{4-5} 
{\color[HTML]{FF00FF} $\myEtoP{\mathbf{0}}{\Gamma_t}{3\Gamma_o}$}         & {\color[HTML]{FF00FF} -4.249668}&           & {\color[HTML]{009901} $\mathbf{SNP}_{^5\Gamma_o}$} & {\color[HTML]{009901} -7.088049}\\ \cline{1-2} \cline{4-5} 
{\color[HTML]{9B9B9B} $\myEtoP{\mathbf{0}}{^2\Gamma_t}{3\Gamma_o,\Gamma_t}$} & {\color[HTML]{9B9B9B} -4.249669} &           & {\color[HTML]{009901} $\mathbf{SNP}_{^4\Gamma_o}$} & {\color[HTML]{009901} -7.097747} \\ \cline{1-2} \cline{4-5} 
{\color[HTML]{00A0A0} $\myHom{o}{3\Gamma_o,\Gamma_t}$}     & {\color[HTML]{00A0A0} -4.249669} &           & {\color[HTML]{009901} $\mathbf{SNP}_{^3\Gamma_o}$} & {\color[HTML]{009901} -7.120570} \\ \cline{1-2} \cline{4-5} 
{\color[HTML]{00A0A0} $\myHom{t}{4\Gamma_o}$}             & {\color[HTML]{00A0A0} -4.483178} &           & {\color[HTML]{FE0000} $\mathbf{PD}_{^8\Gamma_t}$} & {\color[HTML]{FE0000} -7.151054}\\ \cline{1-2} \cline{4-5} 
{\color[HTML]{9B9B9B} $\myEtoP{\mathbf{0}}{^2\Gamma_t}{4\Gamma_o}$}         & {\color[HTML]{9B9B9B} -4.483183} &           & {\color[HTML]{FE0000} $\mathbf{PD}_{^4\Gamma_t}$} & {\color[HTML]{FE0000} -7.153300}\\ \cline{1-2} \cline{4-5} 
{\color[HTML]{FF00FF} $\myEtoP{\mathbf{0}}{\Gamma_t}{4\Gamma_o}$}         & {\color[HTML]{FF00FF} -4.483359} &           & {\color[HTML]{FE0000} $\mathbf{PD}_{^2\Gamma_t}$} & {\color[HTML]{FE0000} -7.163762}\\ \cline{1-2} \cline{4-5} 
{\color[HTML]{9B9B9B} $\myEtoP{\mathbf{0}}{^2\Gamma_t}{4\Gamma_o,\Gamma_t}$} & {\color[HTML]{9B9B9B} -4.483360} &           & {\color[HTML]{FE0000} $\mathbf{PD}_{\Gamma_t}$} & {\color[HTML]{FE0000} -7.211185}\\ \cline{1-2} \cline{4-5} 
{\color[HTML]{00A0A0} $\myHom{o}{4\Gamma_o,\Gamma_t}$}     & {\color[HTML]{00A0A0} -4.483359}&           &  {\color[HTML]{009901} $\mathbf{SNP}_{\Gamma_o}$}                    & {\color[HTML]{009901} -7.386406}                                       \\ \cline{1-2} \cline{4-5} 
\end{tabular}
\vspace{2mm}
\caption{Computed $\mu$-values at a selection from the infinitely many bifurcations points along the slice $\alpha=0.5$ of system~\cref{eq:san}. The color of each bifurcation point matches the corresponding curve in \cref{fig:Second}. We also show the heteroclinic bifurcation $\mathbf{Q}_{\mathbf{0}}^{^2\Gamma_t}$ from $\mathbf{0}$ to $^2\Gamma_t$ (gray). The ordering and clustering of these bifurcations is illustrated in \cref{fig:SequenceSlice}.}
\label{tab:Sequence}
\end{table}

We also present in \cref{tab:Sequence} the $\mu$-values of representative codimension-one heteroclinic bifurcations from $\mathbf{0}$ to $^2\Gamma_t$. For example, at the heteroclinic bifurcation $\myEtoP{\mathbf{0}}{^2\Gamma_t}{2\Gamma_o}$, system~\cref{eq:san} exhibits a heteroclinic orbit that connects $\mathbf{0}$ to $^2\Gamma_t$ while rotating twice around $\Gamma_o$. This shows that there exist additional cascades inside the cascade $\myHom{o}{2\Gamma_o,m\Gamma_t}$ with $m$ even, and the same holds for the cascade $\myHom{t}{2\Gamma_o,m\Gamma_t}$ with $m$ odd, as our topological sketch in panel~(b2) of \cref{fig:StereoEtoP} suggests due to the $\lambda$-lemma. In general, there must be cascades of the form $\myHom{o/t}{2\Gamma_o,m\Gamma^*}$, $m \in \N$, where $\Gamma^*$ is a different saddle periodic orbit, accumulating onto heteroclinic bifurcations $\myEtoP{\mathbf{0}}{\Gamma^*}{2\Gamma_o}$. Therefore, there exist entire clusters of cascades. These clusters of cascades are repeated starting from $\myHom{t}{2\Gamma_o}$ and ending at $\myHom{o}{2\Gamma_o,\Gamma_t}$, and again for the pair $\myHom{t}{3\Gamma_o}$ and $\myHom{o}{3\Gamma_o,\Gamma_t}$, and so on.  In general, the center of each cluster corresponds to the heteroclinic bifurcation $\myEtoP{\mathbf{0}}{\Gamma_t}{n\Gamma_o}$, with $n=2,3,4,5,...$, which occurs only once in each cluster. For example, note from \cref{tab:Sequence} that the heteroclinic bifurcation $\myEtoP{\mathbf{0}}{\Gamma_t}{2\Gamma_o}$ lies in between the bifurcations $\myEtoP{\mathbf{0}}{^2\Gamma_t}{2\Gamma_o}$ and $\myEtoP{\mathbf{0}}{^2\Gamma_t}{2\Gamma_o,\Gamma_t}$. Furthermore, the last orientable homoclinic bifurcation from the second cluster, i.e., $\myHom{o}{2\Gamma_o,\Gamma_t}$, is associated with the first nonorientable homoclinic bifurcation in the second cluster, i.e. $\myHom{t}{3\Gamma_o}$, because they create the saddle periodic orbits $^4\Gamma_o$ and $^4\Gamma_t$ that disappear in the saddle-node bifurcation $\mathbf{SNP}_{^4\Gamma_o}$. These two homoclinic bifurcations also form a large region in the bifurcation diagram~\cref{fig:Second} with $\zeta=4$ constant. After $\myHom{o}{2\Gamma_o,\Gamma_t}$ occurs, the rotation of $W^u(\mathbf{0})$ around $\Gamma_t$ moves closer to $\Gamma_o$ as $\mu$ decreases, until it becomes $\myHom{t}{3\Gamma_o}$.  Similarly, the third cluster of cascades ends with the orientable homoclinic bifurcation $\myHom{o}{3\Gamma_o,\Gamma_t}$, which is associated with the first homoclinic bifurcation $\myHom{t}{4\Gamma_o}$, because they create $^5\Gamma_o$ and $^5\Gamma_t$ which disappear in $\mathbf{SNP}_{^5\Gamma_o}$; these two homoclinic bifurcations also form a large region in the bifurcation diagram~\cref{fig:Second}, now with $\zeta=5$. In general, the last orientable homoclinic bifurcation of a cluster and the first nonorientable homoclinic bifurcation in the following cluster of cascades bound a large region in the bifurcation diagram~\cref{fig:Second} with the number of their loops corresponding to the value of $\zeta$.  We are able to compute the homoclinic clusters up to $\myHom{o}{8\Gamma_o,\Gamma_t}$ and $\myHom{t}{9\Gamma_o}$, and detect the heteroclinic bifurcations up to $\myEtoP{\mathbf{0}}{\Gamma_t}{5\Gamma_o}$.  The cascade ends on $\mathbf{Q}_{\mathbf{0}}^{\Gamma_o}$, which marks the start, as $\mu$ decreases, of a regime with transversal homoclinic orbits of $\Gamma_o$, that is, the existence of Smale--horseshoe dynamics \cite{Taken1}. 

\begin{figure}
\centering
\includegraphics[width=15.3cm]{./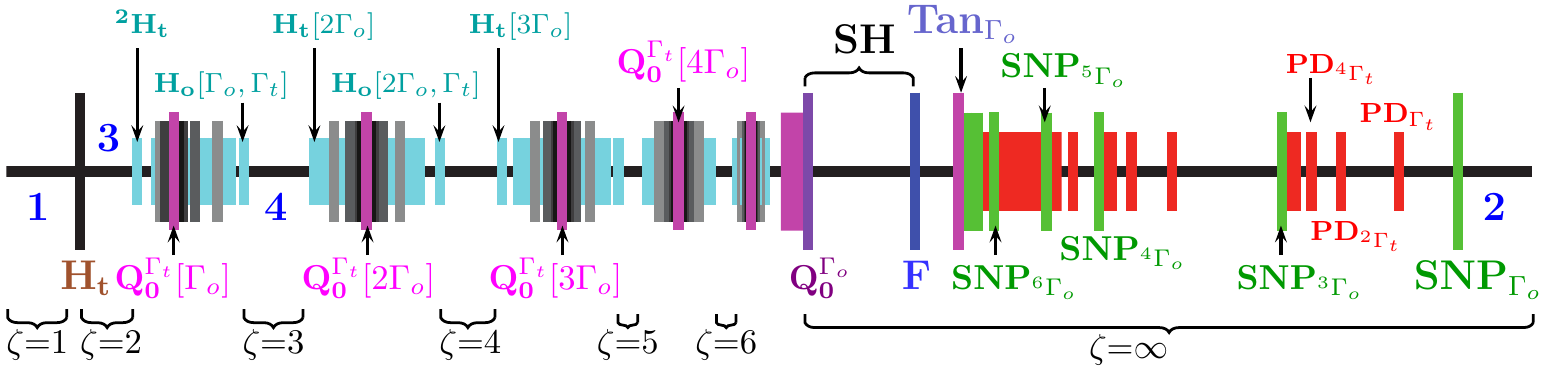} 
\caption{Sketch of the bifurcation sequence along the slice $\alpha=0.5$ for the bifurcation diagram of system~\cref{eq:san} in \cref{fig:Second}. Here, $\mu$ decreases to the right; see also \cref{tab:Sequence}.} 
\label{fig:SequenceSlice}
\end{figure}  

\subsection{Graphical representation of the bifurcation sequence}

\Cref{fig:SequenceSlice} presents a sketch of the bifurcation sequence along the slice $\alpha=0.5$ from \cref{tab:Sequence}. Here, we illustrate how the infinitely many homoclinic and heteroclinic bifurcations organize themselves according to our discussion in the previous paragraph. The intersection sets presented in \cref{fig:StereoEtoP} give us a good explanation for the gap between $\myHom{o}{\Gamma_o,\Gamma_t}$ and $\myHom{t}{2\Gamma_o}$ in \fref{fig:SequenceSlice} and the corresponding gap between the $\mu$-values in \cref{tab:Sequence}: the curves in $\widehat{W}^{s}(\mathbf{0})$ involved in these two bifurcations are the boundaries of their respective clusters of accumulation, which are locally isolated. Notice that this is the case for all pairs of homoclinic bifurcations $\myHom{o}{m\Gamma_o,\Gamma_t}$ and $\myHom{t}{(m+1)\Gamma_o}$, as found for $m=1,2,3,4,5,6,7,8,9$ in \cref{tab:Sequence}. Furthermore, each gap provides a parameter window during which the extra loop of $W^u(\mathbf{0})$, created in the respective codimension-one heteroclinic bifurcation $\mathbf{Q}_{\mathbf{0}}^{\Gamma_t}$, moves closer to $\Gamma_o$; recall that the next homoclinic bifurcation of $\Gamma_o$ will be nonorientable. During these gaps, the value of the winding number $\zeta$ remains constant and is given by the number of loops of the homoclinic orbits that bound the region in $(\alpha,\mu)$-plane. Note that after every cluster of accumulation, the unstable manifold $W^u(\mathbf{0})$ gains a rotation that lies closer to $\Gamma_o$. This means that, as $\mu$ decreases, the $\zeta$ value in each gap increases until it reaches infinity at $\mathbf{Q}_{\mathbf{0}}^{\Gamma_o}$.  It was proven in \cite[Theorems 2 and 5]{Hom1} that these homoclinic bifurcations follow a specific symbolic ordering that is related to their orientation.  In particular, the homoclinic bifurcations presented in \cref{tab:Sequence} follow this ordering, and the gaps in \fref{fig:SequenceSlice} correspond to the isolation regions described in these theorems. We remark that this is the first time that this ordering of homoclinic bifurcation cascades has been computed in a specific vector field. Going beyond known results, we are also able to identify the codimension-one heteroclinic bifurcations $\mathbf{Q}_{\mathbf{0}}^{\Gamma_t}$ as the limiting curves of these cascades. Moreover, we  explain how $\mathbf{Q}_{\mathbf{0}}^{\Gamma_o}$ acts as boundary curve of the region in the parameter plan where these homoclinic cascades can exist.

\Cref{fig:SequenceSlice} also illustrates the order of further bifurcations past $\mathbf{Q}_{\mathbf{0}}^{\Gamma_o}$ that are computed and listed in \cref{tab:Sequence}. There is a Smale--horseshoe region $\mathbf{SH}$ after transitioning through $\mathbf{Q}_{\mathbf{0}}^{\Gamma_o}$; this region is discussed in more detail in \cref{sec:HorseShoe}. Inside $\mathbf{SH}$, the stable manifold $W^s(\mathbf{0})$ becomes tangent to the unstable manifold $W^u(\Gamma_o)$ of $\Gamma_o$ at the bifurcation $\mathbf{F}$, which plays the same role as the fold bifurcation curve of the heteroclinic orbit from $\mathbf{q}$ to $\mathbf{0}$ for cases \textbf{A} and \textbf{B} in \cite{Agu1,And1}. The other boundary of the Smale--horseshoe region $\mathbf{SH}$ is the codimension-one homoclinic bifurcation $\mathbf{Tan}_{\Gamma_o}$ of $\Gamma_o$, where $W^u(\Gamma_o)$ and $W^s(\Gamma_o)$ have a quadratic tangency. Past this bifurcation, there are no more homoclinic orbits of $\Gamma_o$ and one finds small regions of existence of chaotic attractors \cite{Nau1}, which are then destroyed via (reversed) period-doubling and saddle-node cascades of periodic orbits. This is discussed in more detail in \cref{sec:strAttrPD}.  The right-most period-doubling cascade in \fref{fig:SequenceSlice}, the one ending with $\mathbf{PD}_{\Gamma_t}$, changes $\Gamma_t$ into an attracting periodic orbit $\Gamma^a_t$ when $^2\Gamma^a_t$ disappears.  The orbit $\Gamma^a_t$ then disappears in the saddle-node bifurcation $\mathbf{SNP}_{\Gamma_o}$, where it merges with the only other remaining periodic orbit $\Gamma_o$; this marks the entry into region~\mBlue{2}.

\section{Smale--horseshoe region}
\label{sec:HorseShoe} 

\begin{figure}
\centering
\includegraphics{./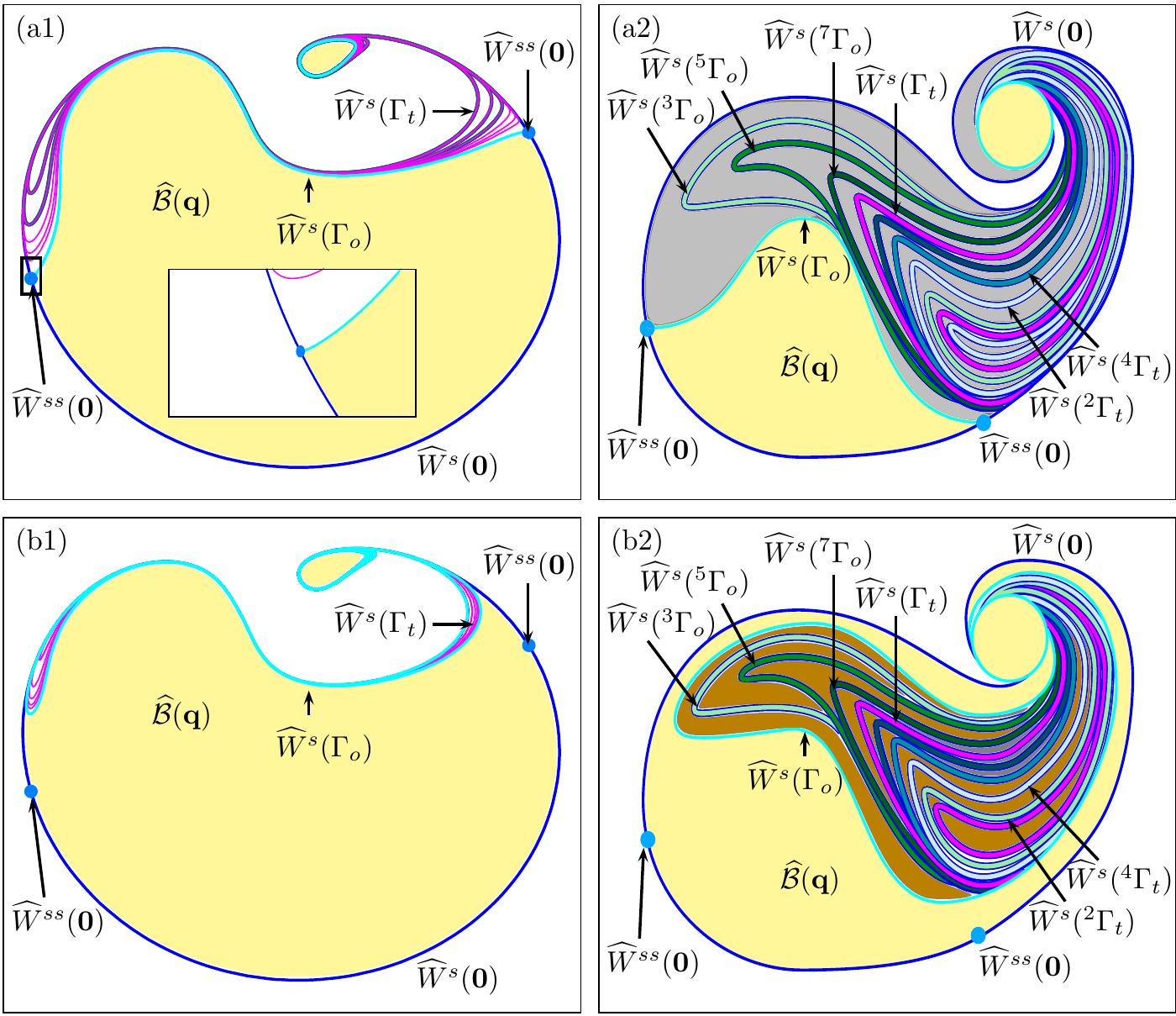}
\caption{Stereographic projection of the intersection sets of the invariant manifolds with $\mS^*$ at, and after the codimension-one heteroclinic bifurcation $\mathbf{Q}^{\Gamma_o}_{\mathbf{0}}$ in rows~(a) and (b), respectively.  Column~(1) shows the computed projections and column~(2) shows corresponding sketches.  The color code and labeling of the regions is the same as in \cref{fig:FirstStereo}.  Panels~(a1) and (b1) are for $(\alpha, \mu)=(0.5, -0.004861805)$ and $(\alpha, \mu)=(0.5, -0.0065)$, respectively.}
\label{fig:stereoEtoPOri}
\end{figure} 

\Fref{fig:stereoEtoPOri} shows the transition from the bifurcation $\mathbf{Q}_{\mathbf{0}}^{\Gamma_o}$ to the Smale--horseshoe region on the level of the intersection sets with the sphere $\mS^*$. As before, the left column shows the numerically computed curves and the right column shows sketches where the gray and brown regions highlight the relative positions of the accumulation regions; each accumulation region is represented by the main intersection curve of the stable manifold of the saddle periodic orbit that exists on it. We present the stereographic projection at the codimension-one bifurcation $\mathbf{Q}_{\mathbf{0}}^{\Gamma_o}$ in panels~(a1) and (a2). Notice in panel~(a1) how one of the topological circles in $\widehat{W}^s(\Gamma_o)$ forms a connecting curve between the two intersection points in $\widehat{W}^{ss}(\mathbf{0})$, as a consequence of $W^s(\Gamma_o)$ accumulating on $W^{ss}(\mathbf{0})$ backward in time; see \cref{fig:tranEtoPOri}. In this limiting case, all the accumulation regions accumulate onto this single curve $\widehat{W}^s(\Gamma_o)$, while spiraling around the other topological circle in $\widehat{W}^s(\Gamma_o)$. The boundary of $\widehat{\mathcal{B}}(\mathbf{q})$ is formed by $\widehat{W}^s(\Gamma_o)$ and the segment of $\widehat{W}^{s}(\mathbf{0})$ in beween the two points $\widehat{W}^{ss}(\mathbf{0})$. Before $\widehat{W}^s(\Gamma_o)$ connects the two points in $\widehat{W}^{ss}(\mathbf{0})$ at $\mathbf{Q}_{\mathbf{0}}^{\Gamma_o}$, infinitely many heteroclinic and homoclinic bifurcations must occur as the corresponding intersection sets accumulate in a spiraling manner onto $\widehat{W}^s(\Gamma_o)$; see panels~(b1) and (b2) of \cref{fig:StereoEtoP}. This explain why the cascades of heteroclinic bifurcations $\mathbf{Q}_{\mathbf{0}}^{\Gamma_t}$ and homoclinic bifurcation accumulate onto the bifurcation curve $\mathbf{Q}_{\mathbf{0}}^{\Gamma_o}$ in \cref{fig:Second}.  

\begin{figure}
\centering
\includegraphics{./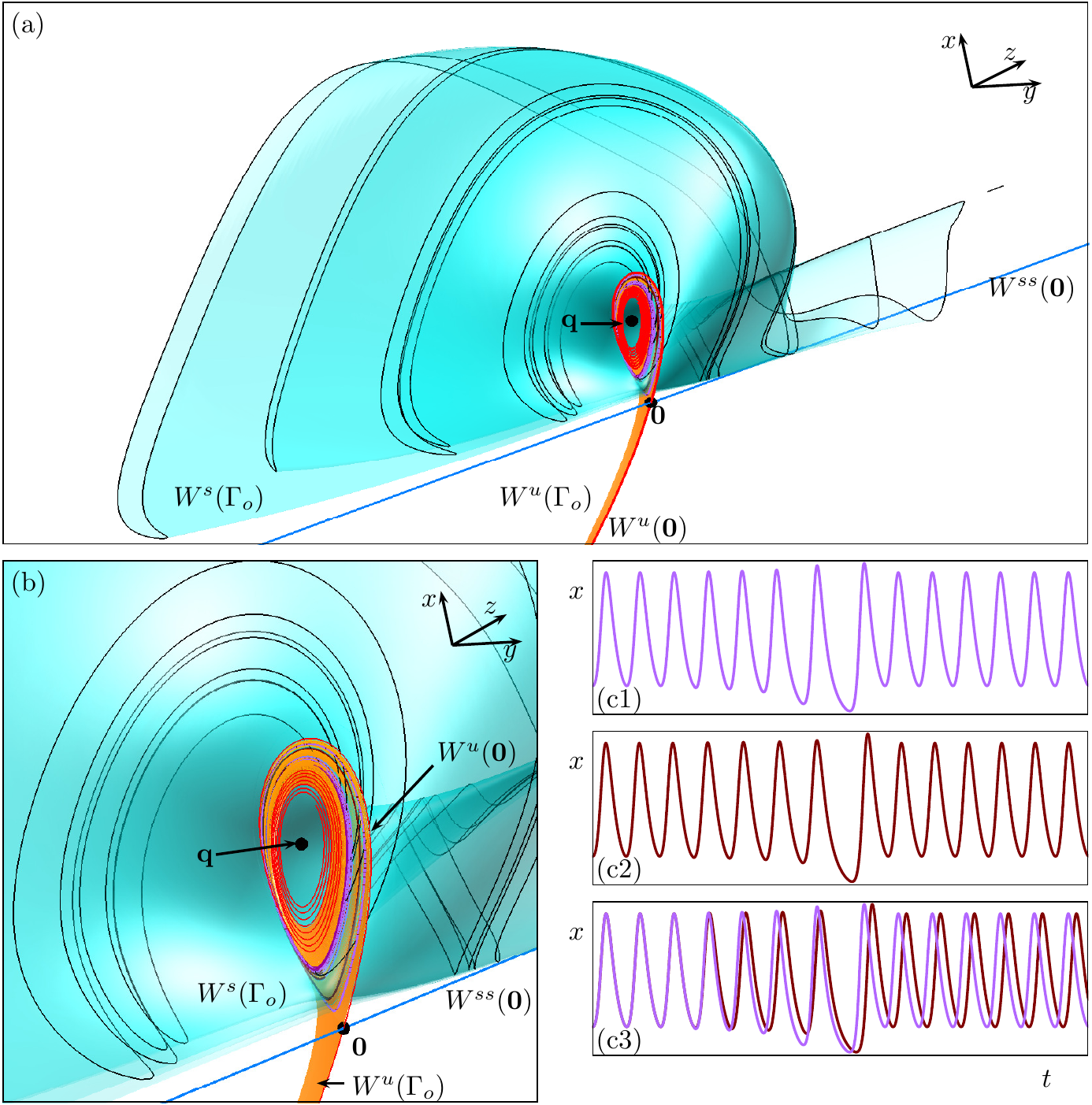}
\caption{Configuration in phase space of $W^s(\Gamma_o)$ (cyan surface) and $W^u(\Gamma_o)$ (orange surface) of system~\cref{eq:san} in the Smale--horseshoe region. Panel (a) shows a big part of $W^s(\Gamma_o)$ and $W^u(\Gamma_o)$ in phase space; while panel (b) shows an enlargement of two transversal homoclinic orbits (maroon and lilac curves) of $\Gamma_o$. Panels (c1) and (c2) show the $x$-component of the transversal homoclinic orbits of $\Gamma_o$ separately; while panel (c3) shows them together.  Also shown are $W^u(\mathbf{0})$ (red curve) and $W^{ss}(\mathbf{0})$ (blue curve).  The panels are for $(\alpha, \mu)=(0.5, -0.0065)$. See also the accompanying animation ({\color{red} GKO\_Cflip\_animatedFig16.gif}).}
\label{fig:HorseShoeDyn} 
\end{figure} 

In panels~(b1) and (b2) of \cref{fig:stereoEtoPOri}, after the heteroclinic bifurcation $\mathbf{Q}_{\mathbf{0}}^{\Gamma_o}$, the curve $\widehat{W}^s(\Gamma_o)$ that connected the two points $\widehat{W}^{ss}(\mathbf{0})$ now accumulates onto the topological circle in $\widehat{W}^s(\Gamma_o)$ that persists through this bifurcation. This accumulation is a consequence of the existence of the homoclinic orbits of $\Gamma_o$ and the $\lambda$-lemma. Since there are infinitely many homoclinic orbits and heteroclinic cycles, there are infinitely many intersection curves in $\widehat{W}^s(\Gamma_o)$, $\widehat{W}^s(\Gamma_t)$, $\widehat{W}^s(^2\Gamma_t)$, etc., that accumulate onto the topological circle in $\widehat{W}^s(\Gamma_o)$. Hence, the accumulation region in the intersection sets is more complicated; we illustrate this change by the brown shading in panel~(b2).  Note also that the boundary of the basin of attraction $\partial \widehat{\mathcal{B}}(\mathbf{q}) $ of $\mathbf{q}$ after the bifurcation $\mathbf{Q}_{\mathbf{0}}^{\Gamma_o}$ in panels~(b1) and (b2) has changed from two topological circles to a topological circle with two handles, that is, $\partial \widehat{\mathcal{B}}(\mathbf{q}) \subset \widehat{W}^{s}(\mathbf{0}) \cup \widehat{W}^s(\Gamma_o)$. 

\subsection{Dynamics inside the region $\mathbf{SH}$}

Past the codimension-one heteroclinic bifurcation $\mathbf{Q}_{\mathbf{0}}^{\Gamma_o} $ of system~\cref{eq:san}, we find structurally stable homoclinic orbits of $\Gamma_o$; hence, there exist Smale--horseshoe dynamics in phase space \cite{Taken1}. \Fref{fig:HorseShoeDyn}~(a) shows such homoclinic orbits (maroon and lilac curves) in more detail. The manifolds $W^s(\Gamma_o)$ (cyan surface) and $W^u(\Gamma_o)$ (orange surface) intersect along the homoclinic orbits $\Gamma_o$.  To illustrate further how these manifolds interact, the accompanying animation shows the manifold $W^s(\Gamma_o)$ from \fref{fig:HorseShoeDyn}~(a) as it is grown in size. Note how the outer boundary of the lower half of the topological cylinder $W^s(\Gamma_o)$ (black curve) accumulates in a spiraling fashion onto the top half of $W^s(\Gamma_o)$. Panel~(b) shows an enlargement of panel~(a) around the homoclinic orbits, where this accumulation is illustrated more clearly. The time series of the $x$-component of the two homoclinic orbits are shown in panels~(c1) and (c2), and together in panel~(c3). The lilac orbit moves faster away from $\Gamma_o$ than the maroon orbit.  The existence of these homoclinic orbits implies infinitely many secondary homoclinic orbits of $\Gamma_o$ \cite{Taken1}. In addition, their existence also implies the appearance of heteroclinic cycles between different saddle periodic orbits, such as $\Gamma_o$, $\Gamma_t$, $^2\Gamma_t$, etc., due to the accumulation of their stable manifolds backward in time onto $W^s(\Gamma_o)$ and the accumulation of $W^u(\Gamma_o)$ onto their respective unstable manifolds.

\subsection{Transition through $\mathbf{F}$}

\begin{figure}
\centering
\includegraphics[scale=0.95]{./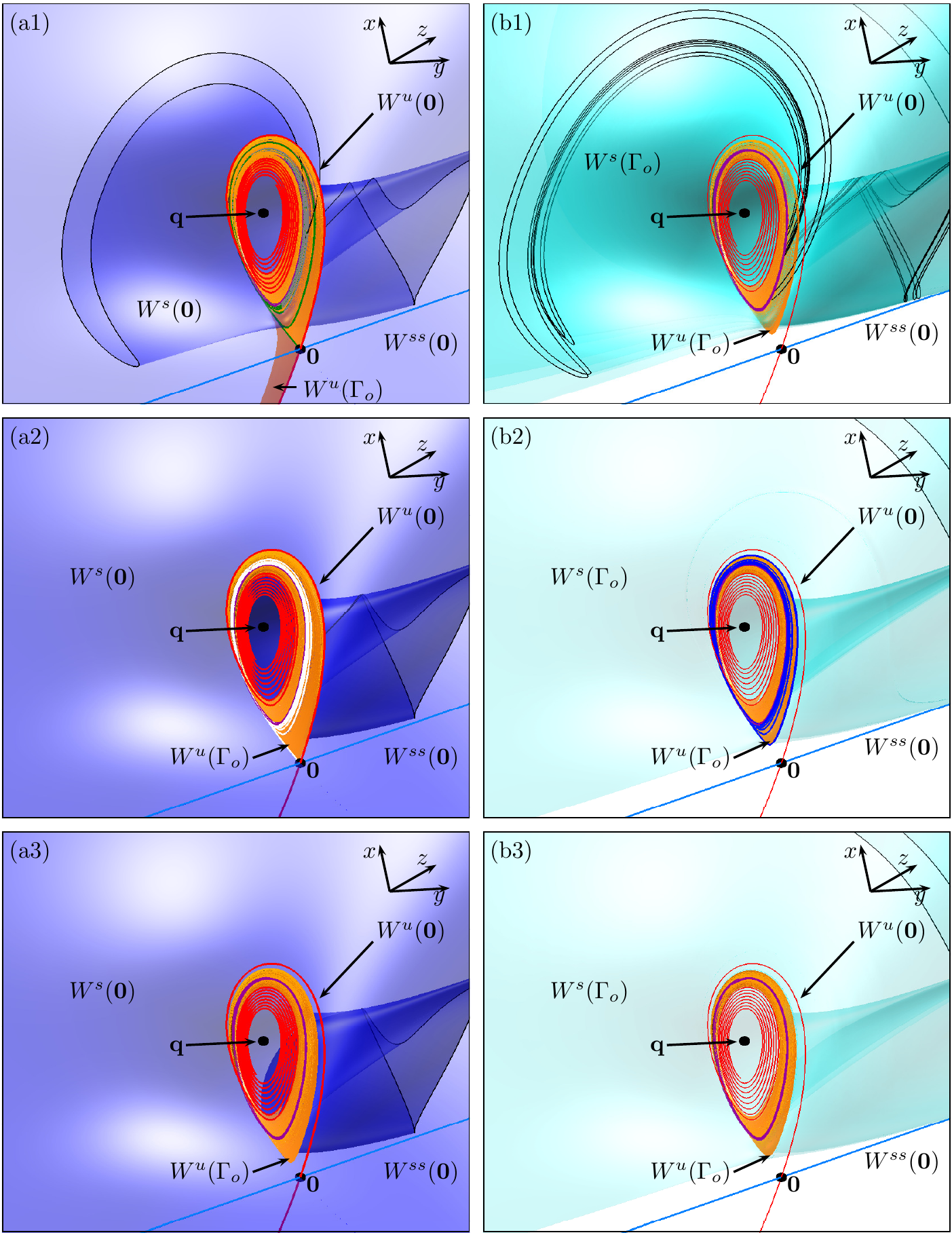}
\caption{Phase portraits of the transition through the bifurcation curves $\mathbf{F}$ (left column) and $\mathbf{Tan}_{\Gamma_o}$ (right column) in system~\cref{eq:san}. Shown are the effect of the transitions with respect to $W^s(\mathbf{0})$ and $W^u(\Gamma_o)$ (left column), and with respect to $W^s(\Gamma_o)$ and $W^u(\Gamma_o)$ (right column). Shown are $W^s(\mathbf{0})$ (blue surface), $W^s(\Gamma_o)$ (cyan surface) and $W^u(\Gamma_o)$ (orange surface); also shown are the corresponding tangency orbits (white and blue curves), $W^u(\mathbf{0})$ (red curve), and $W^{ss}(\mathbf{0})$ (blue curve).  Panels~(a1), (a2) and (a3) are for $(\alpha, \mu)=(0.5, -0.0065)$, $(\alpha, \mu)=(0.5, -0.007054355)$ and $(\alpha, \mu)=(0.5, -0.00706)$, respectively. Panels~(b1), (b2) and (b3) are for $(\alpha, \mu)=(0.5, -0.00706)$, $(\alpha, \mu)=(0.5, -0.007076705)$ and $(\alpha, \mu)=(0.5, -0.0071)$, respectively. See also the accompanying animation ({\color{red} GKO\_Cflip\_animatedFig17.gif}).}
\label{fig:CFHT}
\end{figure} 

The numerical results shown in \cref{tab:Sequence} and \fref{fig:Second} indicate that the fold bifurcation $\mathbf{F}$, in the vicinity of $\mathbf{C_I}$, occurs before the intersection between $W^u(\Gamma_o)$ and $W^s(\Gamma_o)$ becomes tangential, that is, before the bifurcation $\mathbf{Tan}_{\Gamma_o}$. The left column of \fref{fig:CFHT} illustrates the crossing of $\mathbf{F}$, that is, the moment $W^u(\Gamma_o)$ (orange surface) is tangent to $W^s(\mathbf{0})$ (blue surface).  Note the transversal intersection between $W^s(\mathbf{0})$ and $W^u(\Gamma_o)$ in panel~(a1) that persists since its creation in region~\mBlue{1}. Since $\Gamma_o \rightarrow \Gamma_t \rightarrow \mathbf{0}$, there exist infinitely many heteroclinic orbits from $\Gamma_o$ to $\mathbf{0}$, of which we only show two in panel~(a1) (green curves). As $\mathbf{F}$ occurs, which is illustrated in panel~(a2), the stable manifold $W^s(\mathbf{0})$ goes through a quadratic tangency with $W^u(\Gamma_o)$; this is similar to cases \textbf{A} and \textbf{B} \cite{Agu1,And1}, where $W^s(\mathbf{0})$ exhibited a quadratic tangency with $W^u(\mathbf{q})$. In particular for case~\textbf{C}, the infinitely many heteroclinic orbits merge in a tangent heteroclinic orbit, shown as the white curve in panel~(a2).  After the transition through $\mathbf{F}$ shown in panel~(a3), $W^s(\mathbf{0})$ no longer intersects $W^u(\Gamma_o)$; hence, $W^s(\mathbf{0})$ no longer accumulates onto $W^s(\Gamma_o)$ and the unstable manifold $W^u(\mathbf{0})$ no longer bounds $W^u(\Gamma_o)$. In fact, we find that $W^s(\mathbf{0})$ no longer intersects any unstable manifold of any other saddle periodic orbit.

\section{Transition to a strange attractor and period-doubling cascade}
\label{sec:strAttrPD} 

Past $\mathbf{Tan}_{\Gamma_o}$ one finds chaotic attractors, and cascades of period-doubling and saddle-node bifurcations. This allows system~\cref{eq:san} to transition back to region~\mBlue{2}; see \cref{sec:EasyRegions}. 

\subsection{Transition through  $\mathbf{Tan}_{\Gamma_o}$}

\begin{figure}
\centering
\includegraphics{./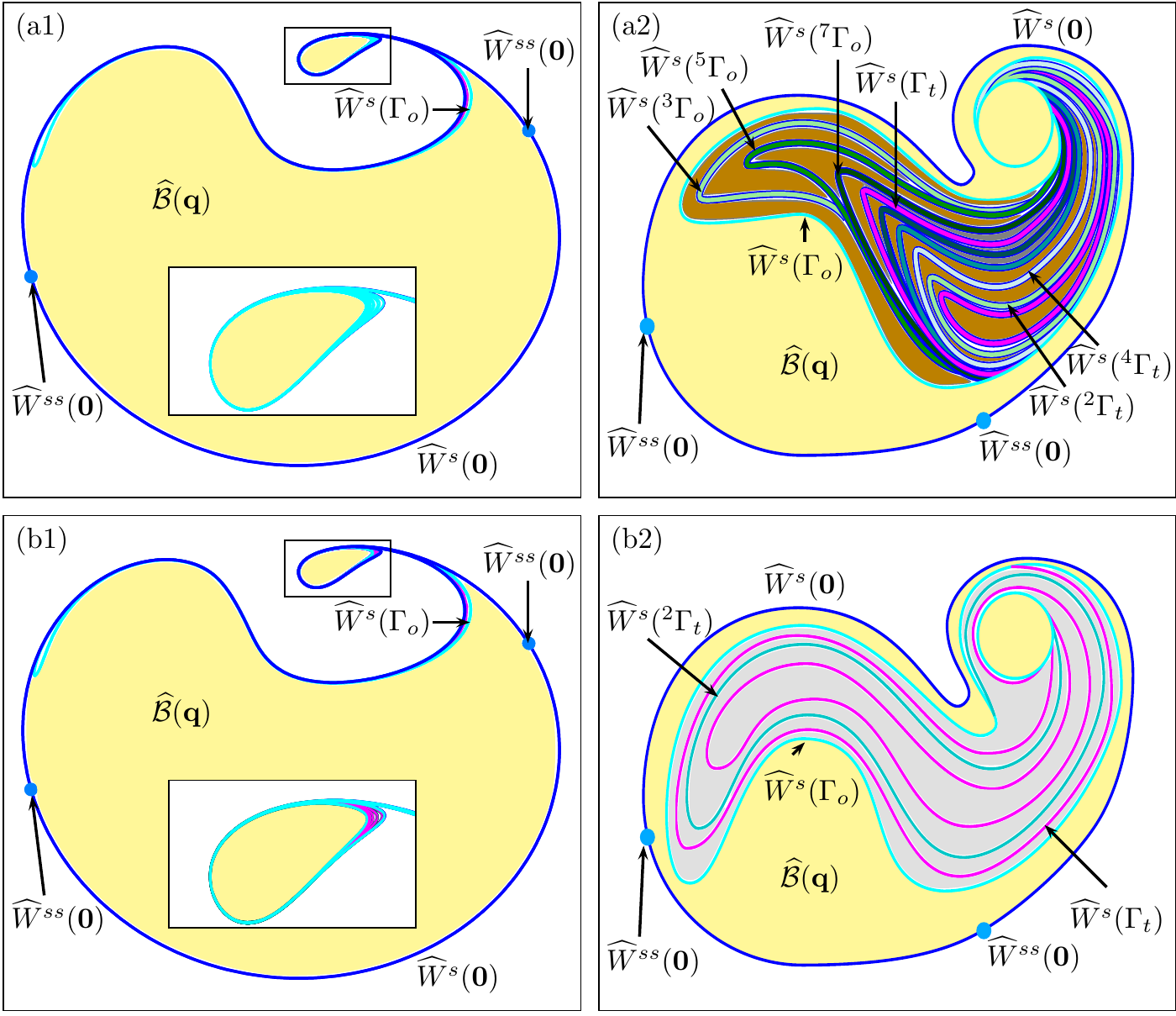}
\caption{Stereographic projection of the intersection sets of the invariant manifolds with $\mS^*$ after crossing the bifurcation curves $\mathbf{F}$ (top row) and $\mathbf{Tan}_{\Gamma_o}$ (bottom row).  The left column shows the computed projections and the right column shows the corresponding sketches.  The color code and labeling is the same as in \cref{fig:FirstStereo}.  Panels~(a1) and (b1) are for $(\alpha, \mu)=(0.5, -0.00706)$ and $(\alpha, \mu)=(0.5, -0.0071)$, respectively.}
\label{fig:StereoFT}
\end{figure} 

The right column of \cref{fig:CFHT} illustrates the crossing of $\mathbf{Tan}_{\Gamma_o}$, that is, the last tangency between $W^u(\Gamma_o)$ (orange surface) and $W^s(\Gamma_o)$ (cyan surface). After the transition through $\mathbf{F}$ and before $\mathbf{Tan}_{\Gamma_o}$ occurs, the intersection between $W^s(\Gamma_o)$ and $W^u(\Gamma_o)$ still persists, as shown in panel~(b1).  As $\mu$ is decreased, system~\cref{eq:san} must exhibit a sequence of codimension-one homoclinic bifurcations of secondary tangencies between $W^s(\Gamma_o)$ and $W^u(\Gamma_o)$ \cite{Taken1}, until it reaches the last homoclinic tangency at $\mathbf{Tan}_{\Gamma_o}$; shown in panel~(b2). The bifurcation $\mathbf{Tan}_{\Gamma_o}$ represents the last intersection (blue curve) between $W^u(\Gamma_o)$ and $W^s(\Gamma_o)$. After the transition through $\mathbf{Tan}_{\Gamma_o}$, shown in panel~(b3), the manifold $W^s(\Gamma_o)$ stops accumulating on itself, because it no longer intersects $W^u(\Gamma_o)$.

\Cref{fig:StereoFT} shows additional details of the transition through $\mathbf{Tan}_{\Gamma_o}$ by considering corresponding intersection sets with the sphere $\mS^*$. Row~(a) shows the situation in between the bifurcations $\mathbf{F}$ and $\mathbf{Tan}_{\Gamma_o}$ were the intersection set $\widehat{W}^s(\mathbf{0})$ is formed by a single closed curve; the left column shows the computed stereographic projections and the right column the corresponding sketches. Hence, the accumulation region (brown region) that is sketched in panel~(a2) does not contain any intersection curves of $\widehat{W}^s(\mathbf{0})$. We find that the other intersection curves do not go through any significant changes for parameter values in between the bifurcations $\mathbf{F}$ and $\mathbf{Tan}_{\Gamma_o}$. Panels~(b1) and (b2) illustrate the consequence of crossing $\mathbf{Tan}_{\Gamma_o}$. In particular, the intersection set $\widehat{W}^s(\Gamma_o)$ now consists of two single closed curves that do not accumulate onto any other intersection curve. Nevertheless, the existence of nontransversal intersections of the unstable manifolds of other saddle periodic orbits with $W^s(\Gamma_o)$ forces the other intersection curves to accumulate onto $\widehat{W}^s(\Gamma_o)$. Since there are no homoclinic orbits of $\Gamma_o$ anymore, we color the accumulation region light gray in panel~(b2). As we will see in the next section, this region may correspond to the basin of attraction of a strange attractor or of an attracting periodic orbit. Note also that $\widehat{W}^s(\Gamma_o)$ is a topological annulus. The insets in panels~(a1) and (b1) show enlargements around one of the topological circles of $W^s(\Gamma_o)$, illustrating the accumulation of $W^s(\Gamma_o)$ on itself and the lack of it before and after the transition through $\mathbf{Tan}_{\Gamma_o}$, respectively.

\subsection{Evidence of the chaotic attractor}

\begin{figure}
\centering
\includegraphics{./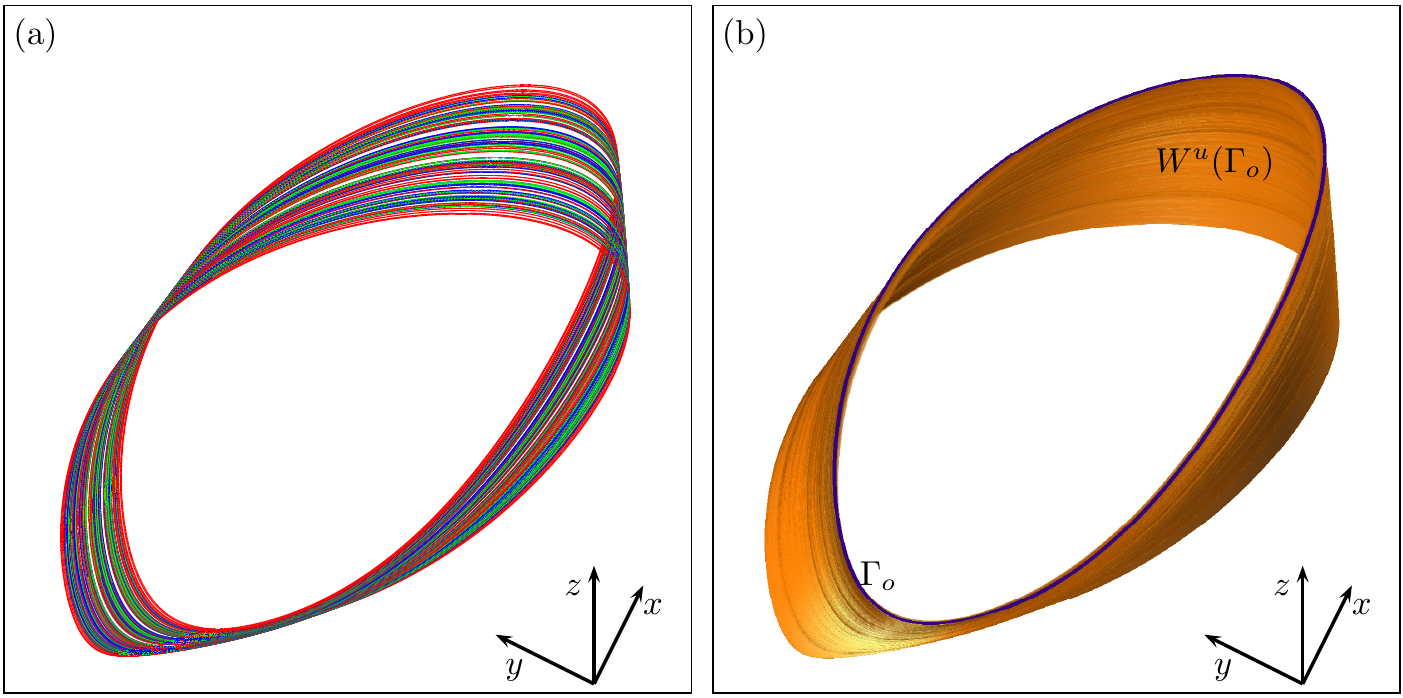}
\caption{Approximation of the chaotic attractor of system~\cref{eq:san} close to the codimension-one homoclinic bifurcation $\mathbf{Tan}_{\Gamma_o}$ of $\Gamma_o$. Panel (a) shows three trajectories (red, blue and green), with initial conditions that differ in the sixth decimal place. Panel (b) shows a portion of $W^u(\Gamma_o)$.  Here $(\alpha,\mu)\approx(0.5,-0.007076768)$.} 
\label{fig:StranAttractor} 
\end{figure}

\begin{figure}
\centering
\includegraphics{./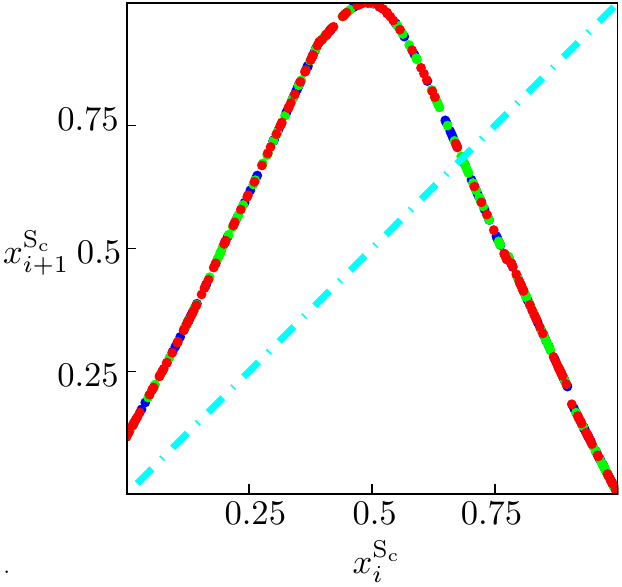} 
\caption{First return map of the scaled $x$-coordinates of the three trajectories shown in \fref{fig:StranAttractor} at the Poincar\'e section $y=0$.  The line $x_{i+1}^{\rm S_c}=x_i^{\rm S_c}$ is indicated (cyan dashed line) and the points are colored according to their corresponding trajectories in \fref{fig:StranAttractor}.} 
\label{fig:ReturnMap}   
\end{figure} 

\Cref{fig:StranAttractor} shows a chaotic attractor in the phase space of system~\cref{eq:san} for $\mu=-0.007076768$. Panel~(a) shows trajectories of three different initial conditions that agree up to five decimal places. The trajectories are shown after their transients have been discarded, to illustrate that they are attracted to a lower-dimensional object in phase space. In particular, the trajectories do not remain arbitrarily close to each other as time progresses, but they do remain arbitrarily close to $W^u(\Gamma_t)$, which is shown in panel~(b). This evidence suggests the existence of a chaotic attractor, which would be contained in the closure of $W^u(\Gamma_o)$.

As further evidence, \fref{fig:ReturnMap} shows the successive returns $x_i^{\rm S_c}$ to the plane $y=0$ of the $x$-coordinates of each of these three trajectories, plotted as $x_{i+1}^{\rm S_c}$ versus $x_i^{\rm S_c}$, where $x_i^{\rm S_c}$ is scaled to the interval $[0,1]$. Clearly, the return map in \cref{fig:ReturnMap} is a unimodal map, which is effectively another clear indication that system~\cref{eq:san} has a chaotic attractor. In fact, it was proven that the Poincar\'e return map close to an inclination flip bifurcation of case~\textbf{C}, up to some $C^r$-rescaling, is $C^r$-close to the family of unimodal maps $\psi_{\overline{a}}(u,v)=(0,1-\overline{a}v^2)$ \cite{Nau1}. We infer from \cref{fig:StranAttractor,,fig:ReturnMap} that the strange attractor contained in the closure of $W^u(\Gamma_o)$ can, therefore, be identified with a template that appears to be equivalent to the template of the R\"{o}ssler attractor \cite{Ali1}. We conjecture that this strange attractor is destroyed in the tangency $\mathbf{Tan}_{\Gamma_o}$ between $W^u(\Gamma_o)$ and $W^s(\Gamma_o)$; however, the situation might be more complicated and involve a loss of hyperbolicity at a first period-doubling bifurcation, as considered in \cite{asaoka1997}.

\subsection{Transition through $\mathbf{PD}$ back to region~\mBlue{2}}

\begin{figure}
\centering
\includegraphics{./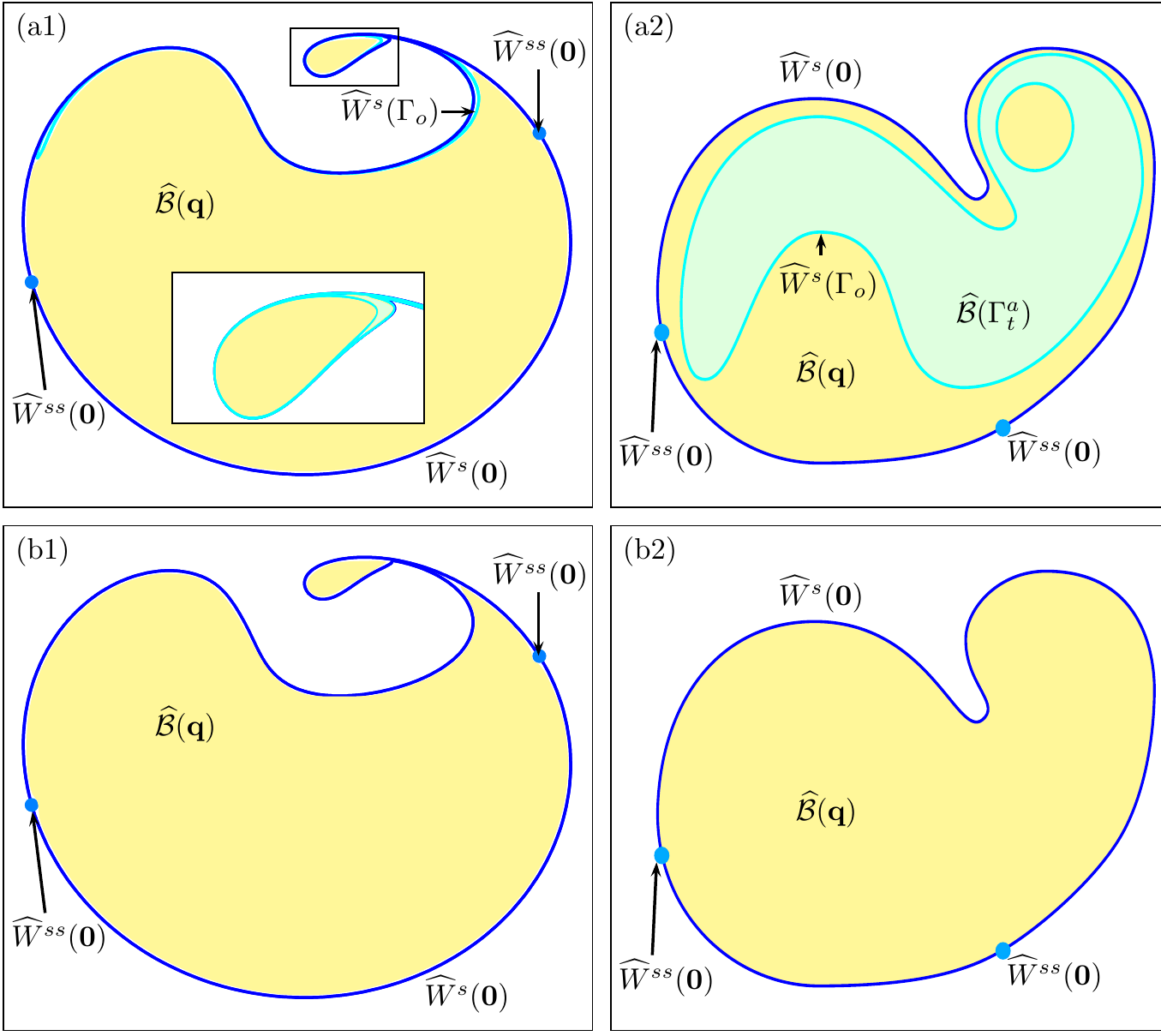}
\caption{Stereographic projection of the intersection sets of the invariant manifolds with $\mS^*$ before (top row) and after (bottom row) the last saddle-node bifurcation $\mathbf{SNP}_{\Gamma_o}$ of periodic orbits.  The left column shows the computed projections and the right column shows the corresponding sketches.  The color code and nomenclature of the regions is the same as given in \cref{fig:FirstStereo}.  Panels~(a1) and (b1) are for $(\alpha, \mu)=(0.5, -0.0073)$ and $(\alpha, \mu)=(0.5, -0.0074)$, respectively.} 
\label{fig:FinalStereo}
\end{figure} 

The saddle periodic orbits that are created during the homoclinic and heteroclinic cascades disappear in cascades of period-doubling and saddle-node bifurcations as $\mu$ is decreased. During this process the intersection sets of different stable manifolds disappear with the corresponding periodic orbits. In fact, the accumulation region alternates between being a basin of attraction of an attracting periodic orbit and that of a strange attractor.  This is caused by the saddle-node bifurcations that create periodic windows and start different period-doubling cascades \cite{Ali1}. These period-doubling and saddle-node cascades terminate at the final saddle-node bifurcation $\mathbf{SNP}_{\Gamma_o}$, which marks the transition back to region~\mBlue{2}.

\Fref{fig:FinalStereo} illustrates the moment before and after the bifurcation $\mathbf{SNP}_{\Gamma_o}$ on the level of the intersection sets of invariant manifolds with $\mS^*$; as before, the left column shows the computed stereographic projections and the right column the corresponding sketches. In panels~(a1) and (a2), after the period-doubling bifurcation $\mathbf{PD}_{\Gamma_t}$, all periodic orbits have disappeared with the exception of $\Gamma_o$ and $\Gamma_t$, which is now attracting periodic orbit $\Gamma^a_t$. Immediately after the period-doubling bifurcation, $\Gamma^a_t$ has a nonorientable strong stable manifold $W^{ss}(\Gamma^a_t)$; however, $\Gamma^a_t$ becomes an attracting periodic orbit $\Gamma^a_o$ with an orientable strong stable manifold before reaching $\mathbf{SNP}_{\Gamma_o}$. This transition from having a nonorientable and an orientable strong stable manifold occurs via a crossing of the curves $\mathbf{CC^-_{\Gamma^a}}$ and $\mathbf{CC^+_{\Gamma^a}}$ where the Floquet multipliers of $\Gamma^a_t$ ($\Gamma^a_o$) change from being real positive (negative) to complex conjugate. In between these curves, the attracting periodic orbit $\Gamma^a$ does not have a well-defined strong stable manifold \cite{And1}. We discuss the existence of the curves $\mathbf{CC}^{\pm}_{\Gamma^a}$ in more detail in \cref{sec:Bubbles}, where we study the bifurcation diagram of system~\cref{eq:san} over an even larger parameter range.  On the level of the intersection sets shown in \cref{fig:FinalStereo}(a), the former accumulation region has now become the basin of attraction $\widehat{\mathcal{B}}(\Gamma^a_t)$ of $\Gamma^a_t$ (green). Furthermore, the basin $\widehat{\mathcal{B}}(\mathbf{q})$ is a disconnected region with boundaries $\widehat{W}^s(\Gamma_o)$ and $\widehat{W}^s(\mathbf{0})$. After, the saddle-node bifurcation $\mathbf{SNP}_{\Gamma_o}$, which marks the disappearance of $\Gamma^a$ and $\Gamma_o$, we find ourselves again in region~\mBlue{2}, as is illustrated in panels~(b1) and (b2). Hence, these panels are topological equivalent to panel~\mBlue{2} of \fref{fig:FirstStereo}.

\section{Global picture in the $(\alpha,\mu)$-plane} 
\label{sec:Bubbles}

\begin{figure}
\centering
\includegraphics{./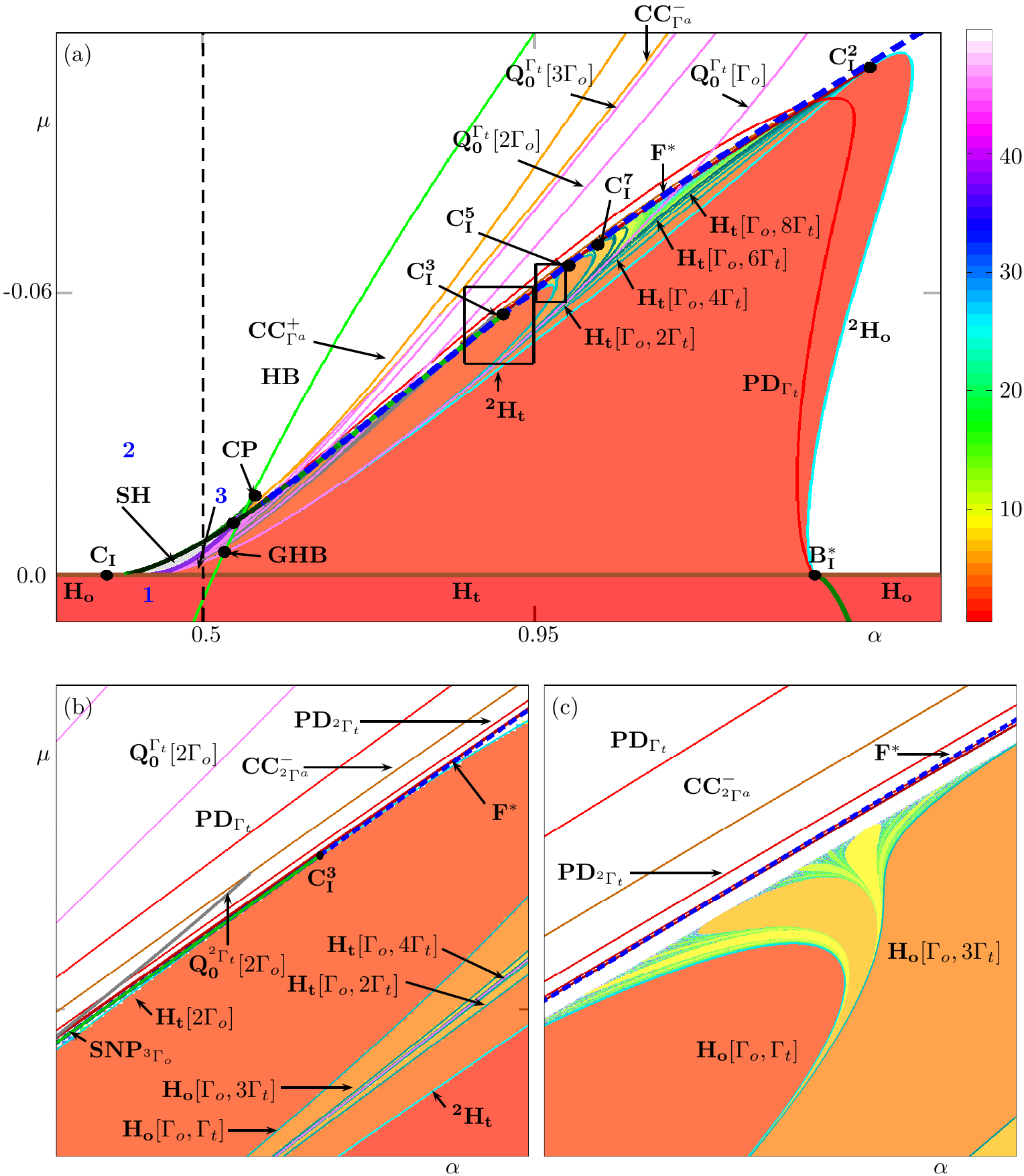}
\caption{Bifurcation diagram of system~\cref{eq:san} for a larger region in the $(\alpha,\mu)$-plane than shown in \cref{fig:Second}.  Panels~(b) and (c) are enlargements as indicated in panel (a). The color code of the regions and labeling of the curves are the same as in \cref{fig:Second}. We also show the curve $\mathbf{Q}_{\mathbf{0}}^{^2\Gamma_t}$ (gray), and the curves $\mathbf{CC}^{\pm}_{\Gamma^a}$ and $\mathbf{CC}^{-}_{^2\Gamma^a}$ (orange) of $\Gamma^a$ and $^2\Gamma^a$, respectively.}
\label{fig:Third}
\end{figure}   

Homoclinic flip bifurcations arise in different mathematical models that describe physical phenomena \cite{Lina1,Yan1}. Certainly from the applications point of view, it is of interest to study how the bifurcation curves that emanate from the homoclinic flip bifurcation point of case~\textbf{C} are organized beyond a small neighborhood of this central codimension-two point. Hence, we now focus on the role of the homoclinic flip bifurcation point $\mathbf{C_I}$ as an organizing center for dynamics and bifurcations in a larger region of the $(\alpha, \mu)$-plane. The overall structure of the bifurcation set in such a larger parameter range features further codimension-two bifurcation phenomena that occur at some distance from $\mathbf{C_I}$. As a  particular type of bifurcation structure we find homoclinic bubbles, which are known to arise as a mechanism for  transitioning between cases~\textbf{B}~and~\textbf{C} in parameter slices near a codimension-three homoclinic flip bifurcation \cite{Hom2}. We show that homoclinic bubbles organize the different bifurcation curves that emerge locally from $\mathbf{C_I}$ more globally in the $(\alpha, \mu)$-plane.

\Cref{fig:Third} shows the numerically computed bifurcation diagram of $\mathbf{C_I}$ for a much larger parameter range in panel~(a), as well as two enlargements in panels~(b) and (c). In particular, the parameter sweep of $\zeta$ now plays an even more prominent role in discerning small and subtle interactions of the different bifurcation curves. In \cref{fig:Third}, we recognize from \cref{fig:Second} and \cref{tab:Sequence} the principal homoclinic branch (brown curve) with $\mathbf{C_I}$, as well as the curves of saddle-node bifurcation $\mathbf{SNP}$ (dark-green), of period-doubling bifurcation $\mathbf{PD}$ (red), of homoclinic bifurcation $\mathbf{Tan}_{\Gamma_o}$ (violet), of heteroclinic bifurcation $\mathbf{Q}_{\mathbf{0}}^{\Gamma_t}$ (magenta), $\mathbf{Q}_{\mathbf{0}}^{\Gamma_o}$ (purple) and $\myEtoP{\mathbf{0}}{^2\Gamma_t}{2\Gamma_o}$ (gray), and of homoclinic bifurcation $\mathbf{H_o}[m\Gamma_o,n\Gamma_t]$ $\mathbf{H_t}[m\Gamma_o,n\Gamma_t]$ (cyan curves).  Note also the region $\mathbf{SH}$ where Smale--horseshoe dynamics occurs, and the curve $\mathbf{HB}$ of Hopf bifurcation (green curve) that transforms $\mathbf{q}$ into an unstable saddle focus.  Panel~(a) shows that the curve $\mathbf{HB}$ does not interact with any other bifurcation curve for $\mu$-values larger than that at the generalized Hopf bifurcation point $\mathbf{GHB}$. We also observe more clearly the curve $\mathbf{F^*}$ (dashed blue); recall that this curve represents the moment when $W^s(\mathbf{0})$ becomes tangent to $W^u(\mathbf{q})$, and it is the extension of the curve $\mathbf{F}$ past its intersection with $\mathbf{SNP^*}$, as shown in \cref{fig:Second}.  At first glance, $\mathbf{F^*}$ seems to bound the region where the homoclinic curves exists, but this is not the case, as we are going to discuss in more detail in the following section. We also show the curves $\mathbf{CC}^{-}_{\Gamma^a}$ and $\mathbf{CC}^{+}_{\Gamma^a}$ (orange) which represent the moment the Floquet multipliers of the attracting periodic orbit $\Gamma^a$ become complex conjugate with negative and positive real parts, respectively.  In addition, we also present the curve $\mathbf{CC}^{-}_{^2\Gamma^a}$ (dark orange) of the attracting periodic orbit $^2\Gamma^a$; these are visible in panels~(b) and (c). 

The curves $\mathbf{Q}_{\mathbf{0}}^{\Gamma_t}$ shown in \cref{fig:Third} extend to the top part of the figure in panel~(a). They represent the extensions of the heteroclinic bifurcation curves in \cref{fig:Second} past the transition of $\Gamma_t$ to the attracting periodic orbit $\Gamma^a_t$, where $\mathbf{Q}_{\mathbf{0}}^{\Gamma_t}$ corresponds to the intersection of the two-dimensional strong stable manifold $W^{ss}(\Gamma^a_t)$ with $W^u(\mathbf{0})$, which is also a codimension-one phenomenon. As such, these curves terminate at the respective curve $\mathbf{CC}^{-}_{\Gamma^a}$, that is, at the moment when $W^{ss}(\Gamma^a_t)$ is no longer well defined.  This also holds for the heteroclinic bifurcation $\myEtoP{\mathbf{0}}{^2\Gamma_t}{2\Gamma_o}$ (gray curve), which can be seen to terminate at $\mathbf{CC}^{-}_{^2\Gamma^a}$ in \cref{fig:Third}(b). 

\subsection{Cascades of inclination flip bifurcations}

The larger parameter range in \cref{fig:Third}(a) reveals how the homoclinic bifurcations, which were hard to discern in \cref{fig:Second}, fan out and are more distinguishable. Furthermore, we clearly see how each homoclinic bifurcation curve encloses a region in the $(\alpha,\mu)$-plane with a constant $\zeta$ value; indeed, the $\zeta$-value in each region increases as a heteroclinic bifurcation $\mathbf{Q}_{\mathbf{0}}^{\Gamma_t}$ or $\mathbf{Q}_{\mathbf{0}}^{^2\Gamma_t}$ is approached; see also \cref{sec:Cascades}. 

It is an important feature of \cref{fig:Third} that many of the homoclinic, saddle-node and period-doubling bifurcation curves emanating from $\mathbf{C_I}$ end at other codimension-two inclination flip bifurcation points. Notably, the principal homoclinic branch goes through a secondary inclination flip bifurcation $\mathbf{B^*_I}$ of case~\textbf{B}, which is the end point of the curves $\mathbf{^2H_o}$ and $\mathbf{PD}_{\Gamma_t}$. Moreover, other homoclinic curves exhibit inclination flip bifurcations $\mathbf{C^n_I}$ of case~\textbf{C}, where $n$ represents the number of loops that the corresponding homoclinic orbit makes in phase space.

The bifurcation diagram near one of these inclination flip points, $\mathbf{C^3_I}$, is shown in \cref{fig:Third}(b).  Notice how $\mathbf{C^3_I}$ is responsible for the transformation of the orientable homoclinic bifurcation $\myHom{o}{\Gamma_o,\Gamma_t}$ to the nonorientable homoclinic bifurcation $\myHom{t}{2\Gamma_o}$, both of which emanate from the initial inclination flip bifurcation $\mathbf{C_I}$.  Recall that these homoclinic bifurcations are related as they represent the last and the first bifurcation of their respective accumulation cluster, as discussed in \cref{sec:CompCascades}. Moreover, they create the saddle periodic orbits $^3\Gamma_o$ and $^3\Gamma_t$, respectively, and they bound a region in the parameter plane where $\zeta=3$; see also \cref{fig:SequenceSlice}. So it does not come as a surprise that $\mathbf{SNP}_{^3\Gamma_o}$ (green curve) ends at the bifurcation point $\mathbf{C^3_I}$, as this bifurcation curve is responsible for the disappearance of $^3\Gamma_o$ and $^3\Gamma_t$; indeed $\mathbf{SNP}_{^3\Gamma_o}$ plays the same role as $\mathbf{SNP}_{\Gamma_o}$ in the unfolding of $\mathbf{C_I}$.  In \Cref{fig:Third}(c) we observe that $\myHom{o}{\Gamma_o,\Gamma_t}$ turns around before reaching $\mathbf{C^3_I}$, and then the curve continues smoothly past $\mathbf{C^3_I}$ as $\myHom{t}{2\Gamma_o}$ before returning to $\mathbf{C_I}$. This same scenario can also be observed for the inclination flip points $\mathbf{C^5_I}$ and $\mathbf{C^7_I}$.  Moreover, \cref{fig:Third}(c) shows that there exist infinitely many regions with constant $\zeta$ in between the two inclination flip points $\mathbf{C^3_I}$ and $\mathbf{C^5_I}$. The boundaries of each of these regions are homoclinic bifurcation curves emanating from $\mathbf{C_I}$, which also exhibit an inclination flip bifurcation of case~\textbf{C} along them and then come back to $\mathbf{C_I}$ as an homoclinic bifurcation curve of the opposite orientation. As a consequence, the respective saddle-node and period-doubling bifurcation curves extend from $\mathbf{C_I}$ to this additional inclination flip bifurcation point. 

\begin{figure}
\centering
\includegraphics[height=15.2cm]{./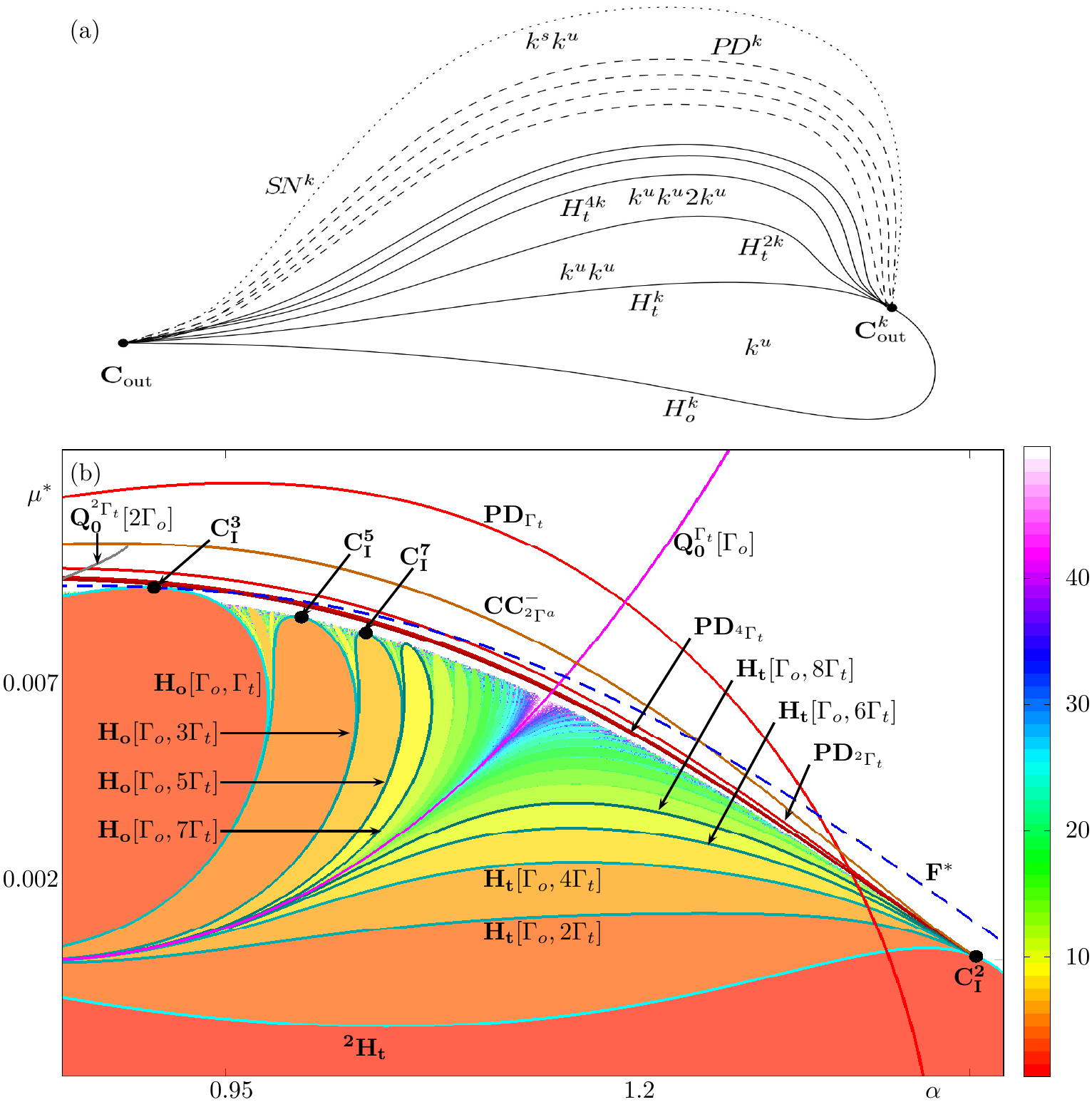}
\caption{Comparison of a theoretical and numerical inclination flip bubble. Panel (a) shows the sketch of an inclination flip bubble from \cite[Fig. 12]{Hom2}; panel (b) shows a region of \cref{fig:Third} in the $(\alpha, \mu^*)$-plane, where $\mu^*:=\mu+0.1246\alpha-0.06644$. Color code of the regions and nomenclature of the curves are as in \cref{fig:Third}. [The inset is reproduced from Journal of Dynamics and Differential Equations, Resonant Homoclinic Flip Bifurcations, 12(4), 2000, pages 807-850, A. J. Homburg and B. Krauskopf, \textcopyright Plenum Publishing Corporation 2000 with permission of Springer.] } 
\label{fig:Bubble}
\end{figure}  

\subsection{Homoclinic bubbles}

This type of overall bifurcation structure generated by a particular homoclinic curve is called a homoclinic bubble \cite{Hom2}. Recall our discussion of the gaps between accumulation clusters along the slice $\mu=0.5$, which we found in \cref{tab:Sequence} and sketched in \cref{fig:SequenceSlice}. \Cref{fig:Third}~(a) shows that the pairs of homoclinic bifurcations $\myHom{o}{m\Gamma_o,\Gamma_t}$ and $\myHom{t}{(m+1)\Gamma_o}$, as found for $m=1,2,3,4,5,6,7,8,9$ in \cref{tab:Sequence} form bubbles in the parameter plane. In particular, identified cascades of these bubbles explain the disappearance of the infinitely many codimension-one curves that emanate from the unfolding of case~\textbf{C} during a transition to case~\textbf{B}. In fact, two types of bubbles were conjectured \cite{Hom2} and subsequently confirmed numerically \cite{OldKra1}: one is characterized by a homoclinic-doubling cascade and the other by an additional homoclinic flip bifurcation of case~\textbf{C}. \Cref{fig:Bubble} (a) shows the theoretical sketch of a single homoclinic $k$-bubble, for $k \in \N$, of the latter type, reproduced from \cite{Hom2}.  Notice how an orientable $k$-homoclinic bifurcation curve $H_o^k$ emanates from the principal homoclinic flip bifurcation point $\mathbf{C_{out}}$ and then exhibits an inclination flip bifurcation $\mathbf{C}^k_{\mathbf{out}}$, after which it is the nonorientable $k$-homoclinic bifurcation curve $H_t^k$. In this way, the homoclinic and period-doubling bifurcation curves of the form $2^nk$, for $n \in \N$, emanating from $\mathbf{C_{out}}$ disappear in the unfolding of $\mathbf{C}^k_{\mathbf{out}}$. For any $k \in \N$, there exists $k$-bubbles that annihilate all the corresponding $2^nk$ curves of $\mathbf{C_{out}}$. Those are exactly the bifurcation structures we observe in the bifurcation diagram of \cref{fig:Third} in the transition between the inclination flip points $\mathbf{C_I}$ and $\mathbf{B^*_I}$.

\Cref{fig:Bubble}(b) shows a different enlargement of the bifurcation diagram of $\mathbf{C_I}$. Here, we use the coordinate transformation $\mu^*:=\mu+0.1246\alpha-0.06644$ and plot the bifurcation diagram in the $(\alpha, \mu^*)$-plane to improve visualization and comparison with the sketch in panel~(a). The homoclinic curve $\mathbf{^2H_t}$ exhibits the inclination flip bifurcation $\mathbf{C_I^2}$, which allows $\mathbf{^2H_t}$ to become $\mathbf{^2H_o}$ so it can disappear at $\mathbf{B^*_I}$. This is one of the most important curves in the transition between the two cases \cite{Hom2}. However, it does not form a bubble as presented in \cite{Hom2} because both homoclinic curves do not emanate from the same point $\mathbf{C_I}$. Nevertheless, the inclination flip point $\mathbf{C_I^2}$ plays the same role as the homoclinic flip point $\mathbf{C}^k_{\mathbf{out}}$ of a $k$-bubble for $k=2$ in terms of absorbing/generating the respective bifurcation curves.  Notice in \cref{fig:Third}(b) how the cascade of nonorientable homoclinic bifurcation curves (cyan) of the form $\myHom{t}{\Gamma_o,n\Gamma_t}$, for $n=2,4,6,8$, and the period-doubling bifurcation curves $\mathbf{PD}_{^2\Gamma_t}$ and $\mathbf{PD}_{^4\Gamma_t}$ (red) connect the points $\mathbf{C_I}$ and $\mathbf{C^2_I}$ as sketched in panel~(a).  

The same phenomenon occurs at the points $\mathbf{C^3_I}$, $\mathbf{C^5_I}$, and $\mathbf{C^7_I}$, but the curves are much closer together. \Cref{fig:Bubble}(b) clearly shows the complicated structure of how some homoclinic bifurcation curves create bubbles and others connect to inclination flip points on bubbles. It also shows how infinitely many homoclinic bubbles accumulate on $\myEtoP{\mathbf{0}}{\Gamma_t}{\Gamma_o}$, which is illustrated by the increase of $\zeta$. Furthermore, notice how the $\zeta$-value also increases in the different regions in between bubbles, such as those of $\mathbf{C^3_I}$, $\mathbf{C^5_I}$ and $\mathbf{C^7_I}$, each of which is associated with a codimension-one heteroclinic bifurcation of the type $\mathbf{Q}_{\mathbf{0}}^{\Gamma^*}$ for a suitable saddle periodic orbit $\Gamma^*$; see \cref{sec:Cascades}.  The accumulation of these homoclinic bifurcation curves and bubbles onto a heteroclinic bifurcation of the form $\mathbf{Q}_{\mathbf{0}}^{\Gamma_t}$ was not identified in either in \cite{Hom2} or \cite{OldKra1}.  Another interesting detail about the bubbles in \Cref{fig:Third}(b) is how they seem to be bounded by a single smooth curve, where the inclination flip points $\mathbf{C^3_I}$, $\mathbf{C^5_I}$ and $\mathbf{C^7_I}$ occur along it. At first glance, it appears that this curve is $\mathbf{F^*}$, that is, the fold curve of a tangency between $W^u(\mathbf{q})$ and $W^s(\mathbf{0})$, as shown in panel~(a) and (b) in \cref{fig:Third}. However, \cref{fig:Bubble}~(b) indicates that this is not the case. This boundary curve represents a boundary crisis \cite{Hin2}, and infinitely many inclination flip bifurcations of case~\textbf{C} occur along it.  We believe that this curve of boundary crisis involves the tangency of an, as yet, unknown invariant object.

Finally, we remark that the eigenvalues of $\mathbf{0}$ do not change as $\alpha$ and $\mu$ vary in the bifurcation diagrams shown in \cref{fig:Third} and \cref{fig:Bubble}; see \cref{sec:san}. This raises the question of the existence of $\mathbf{B^*_I}$ in \cref{fig:Third}, because it fullfils the eigenvalue conditions of case~\textbf{C}. Preliminary work suggest that it also fullfils the necessary geometry condition for case~\textbf{C}. On the other hand, our bifurcation analysis clearly identifies the point $\mathbf{B^*_I}$ as an inclination flip bifurcation of case~\textbf{B}.  Further analysis of this codimension-two point is beyond the scope of this paper, and left for future work.

\section{Discussion} 
\label{sec:Dis}

We conducted a detailed case study of a codimension-two inclination flip bifurcation point $\mathbf{C_I}$ of case \textbf{C} in Sandstede's model~\cref{eq:san}. More specifically, we presented the bifurcation set in the $(\alpha,\mu)$-parameter plane and illustrated the associated dynamics at representative parameter points in regions of distinct qualitative behavior. This required the computation of relevant global invariant manifolds of saddle objects in phase space, as well as many curves of homoclinic, heteroclinic, saddle-node and period-doubling bifurcations. Moreover, we calculated a winding number and identified the regions where it is constant.

Near the central point $\mathbf{C_I}$ in the $(\alpha,\mu)$-plane we found a very complicated structure of bifurcations. As predicted by what is known about the theoretical unfolding \cite{Hom1}, we identified cascades of local and global bifurcations, as well as regions with Smale--horseshoe dynamics and strange attractors. We clarified the precise arrangement of these different ingredients and, moreover, identified a number of new bifurcations near $\mathbf{C_I}$. Specifically, we found that cascades of homoclinic bifurcations accumulate on codimension-one heteroclinic bifurcations between a nonorientable saddle periodic orbit $\Gamma_t$, and the central equilibrium $\mathbf{0}$; different types of cascades can be distinguished by the number of rotations that the global orbit makes near the saddle periodic orbit. Moreover, we identified the boundaries of the Smale--horseshoe region: the codimension-one heteroclinic bifurcation $\mathbf{Q}_{\mathbf{0}}^{\Gamma_o}$, where there is a connecting orbit from $\mathbf{0}$ to the orientable saddle periodic orbit $\Gamma_o$; and the codimension-one homoclinic bifurcation $\mathbf{Tan}_{\Gamma_o}$ of $\Gamma_o$ (which was conjectured to exist in \cite{Nau1}).

We proceeded by considering the bifurcation set in the $(\mu,\alpha)$-plane more globally, further away from the central point $\mathbf{C_I}$, to determine the overall organization of the emanating curves of codimension-one bifurcations. First of all, we found that the region with Smale--horseshoe dynamics is, in fact, bounded, because the curves $\mathbf{Q}_{\mathbf{0}}^{\Gamma_o}$ and $\mathbf{Tan}_{\Gamma_o}$ intersect to form a codimension-two heteroclinic cycle between $\mathbf{0}$ and the orientable saddle periodic orbit $\Gamma_o$. Zooming out even more,  revealed a complex overall picture involving the transition to an inclination flip bifurcation of case~\textbf{B}. A prominent feature of it are bubbles formed by certain homoclinic bifurcation curves in parameter plane, which exhibit an additional homoclinic flip bifurcation of case~\textbf{C} that changes the orientation of the respective homoclinic bifurcation. Such bubbles were proposed as a crucial ingredient in the transition between cases~\textbf{B} and \textbf{C} in a codimension-three resonant homoclinic flip bifurcation~\cite{Hom2}, and then found numerically in \cite{OldKra1}.  We identified infinitely many homoclinic bubbles in the $(\mu,\alpha)$-plane, clarified the role of heteroclinic bifurcations for their organization and described their accumulation on a specific boundary curve.

The detailed numerical investigation presented here identified several generic bifurcation phenomena that were not considered before, as part of the unfolding of a homoclinic flip bifurcation of case~\textbf{C} or otherwise. Now that they are known and described, they can be studied theoretically by considering them in a more abstract and general setting. Hence, an advanced numerical study as presented here can contribute to a better understanding of theoretical constructs, especially in situations when the theory is very intricate. In this context it will be interesting to study, in the same spirit, the configuration of manifolds responsible for the inward twist case $\mathbf{C_{in}}$, which features a related but different unfolding~\cite{Hom1}. However, at present, we are not aware of a concrete example of this codimension-two phenomenon in a three-dimensional vector field.

More generally, the results presented here can be seen as a showcase of the capabilities of advanced numerical methods based on two-point boundary value problems for the bifurcation analysis of a given system with complicated global bifurcations. In particular, homoclinic flip bifurcations have been found, for example, in the Hindmarsh--Rose model of a spiking neuron, where they explain the creation of large spiking excursions of periodic orbits in the presence of slow-fast dynamics~\cite{Lina1}. Further study of the role of global bifurcations in systems with multiple time scales is a promising direction for future research from both a theory and an applications point of view; for a recent example see \cite{Mujica1}.


\bibliographystyle{siam}

\begin{thebibliography}{10}

\bibitem{Agu1}
{\sc P.~Aguirre, B.~Krauskopf, and H.~M. Osinga}, {\em Global invariant
  manifolds near homoclinic orbits to a real saddle: ({Non})orientability and
  flip bifurcation}, SIAM J. Appl. Dyn. Syst., 12 (2013), pp.~1803--1846.

\bibitem{asaoka1997}
{\sc M.~Asaoka}, {\em A natural horseshoe-breaking family which has a period
  doubling bifurcation as the first bifurcation}, J. Math. Kyoto Univ., 37
  (1997), pp.~493--511.

\bibitem{Barrio1}
{\sc R.~Barrio, M.~Lefranc, M.~A. Mart\'inez, and S.~Serrano}, {\em Symbolic
  dynamical unfolding of spike-adding bifurcations in chaotic neuron models},
  {EPL (Europhysics Letters)}, 109 (2015).

\bibitem{Shil4}
{\sc R.~Barrio and A.~Shilnikov}, {\em Parameter-sweeping techniques for
  temporal dynamics of neuronal systems: case study of hindmarsh-rose model},
  J. Math. Neurosci., 1 (2011).

\bibitem{Shil3}
{\sc R.~Barrio, A.~Shilnikov, and L.~Shilnikov}, {\em Kneadings, symbolic
  dynamics and painting {L}orenz chaos}, Internat. J. Bifur. Chaos Appl. Sci.
  Engrg., 22 (2012).

\bibitem{Call1}
{\sc R.~C. Calleja, E.~J. Doedel, A.~R. Humphries, A.~Lemus-Rodriguez, and
  E.~B. Oldeman}, {\em Boundary-value problem formulations for computing
  invariant manifolds and connecting orbits in the circular restricted three
  body problem}, Celest. Mech. Dyn. Astron., 114 (2012), pp.~77--106.

\bibitem{Kirk1}
{\sc A.~R. Champneys, V.~Kirk, E.~Knobloch, B.~E. Oldeman, and J.~D.~M.
  Rademacher}, {\em Unfolding a tangent equilibrium-to-periodic heteroclinic
  cycle}, SIAM J. Appl. Dyn. Syst., 8 (2009), pp.~1261--1304.

\bibitem{san2}
{\sc A.~R. Champneys, Y.~Kuznetsov, and B.~Sandstede}, {\em A numerical toolbox
  for homoclinic bifurcation analysis}, Internat. J. Bifur. Chaos Appl. Sci.
  Engrg., 6 (1996), pp.~867--887.

\bibitem{Deng1}
{\sc B.~Deng}, {\em Homoclinic twisting bifurcations and cusp horseshoe maps},
  J. Dynam. Differential Equations, 5 (1993), pp.~417--467.

\bibitem{Doe1}
{\sc E.~J. Doedel}, {\em {AUTO}: A program for the automatic bifurcation
  analysis of autonomous systems}, Congr. Numer., 30 (1981), pp.~265--284.

\bibitem{Doe2}
{\sc E.~J. Doedel and B.~E. Oldeman}, {\em {AUTO}-\textup{07}p: Continuation
  and Bifurcation Software for Ordinary Differential Equations}, Department of
  Computer Science, Concordia University, Montreal, Canada, 2010.
\newblock With major contributions from A. R. Champneys, F. Dercole, T. F.
  Fairgrieve, Y. Kuznetsov, R. C. Paffenroth, B. Sandstede, X. J. Wang and C.
  H. Zhang; available at \url{http://www.cmvl.cs.concordia.ca/}.

\bibitem{Ali1}
{\sc R.~Gilmore and M.~Lefranc}, {\em The Topology of Chaos: Alice in Stretch
  and Squeezeland}, Wiley-Interscience, 2002.

\bibitem{And1}
{\sc A.~Giraldo, B.~Krauskopf, and H.~M. Osinga}, {\em Saddle invariant objects
  and their global manifolds in a neighborhood of a homoclinic flip bifurcation
  of case {B}}, SIAM J. Appl. Dyn. Syst., 16 (2017), pp.~640--686.

\bibitem{Guck1}
{\sc J.~Guckenheimer and P.~Holmes}, {\em Nonlinear Oscillations, Dynamical
  Systems, and Bifurcations of Vector Fields}, {Springer-Verlag, New York},
  1983.

\bibitem{Hom1}
{\sc A.~J. Homburg, H.~Kokubu, and M.~Krupa}, {\em The cusp horseshoe and its
  bifurcations in the unfolding of an inclination-flip homoclinic orbit},
  Ergodic Theory Dynam. Systems, 14 (1994), pp.~667--693.

\bibitem{Hom2}
{\sc A.~J. Homburg and B.~Krauskopf}, {\em Resonant homoclinic flip
  bifurcations}, J. Dynam. Differential Equations, 12 (2000), pp.~807--850.

\bibitem{san3}
{\sc A.~J. Homburg and B.~Sandstede}, {\em Homoclinic and heteroclinic
  bifurcations in vector fields}, in Handbook of Dynamical Systems, H.~W.
  Broer, B.~Hasselblatt, and F.~Takens, eds., vol.~3, {Elsevier, New York},
  2010, pp.~381--509.

\bibitem{kis1}
{\sc M.~Kisaka, H.~Kokubu, and H.~Oka}, {\em Bifurcations to n-homoclinic
  orbits and n-periodic orbits in vector fields}, J. Dynam. Differential
  Equations, 5 (1993), pp.~305--357.

\bibitem{Kopper1}
{\sc M.~Koper}, {\em Bifurcations of mixed-mode oscillations in a
  three-variable autonomous {V}an der {P}ol-{D}uffing model with a cross-shaped
  phase diagram}, Phys. D, 80 (1995), pp.~72--94.

\bibitem{Kra2}
{\sc B.~Krauskopf and H.~M. Osinga}, {\em Computing invariant manifolds via the
  continuation of orbit segments}, in Numerical Continuation Methods for
  Dynamical Systems: {Path Following and Boundary Value Problems},
  B.~Krauskopf, H.~M. Osinga, and J.~Gal\'an-Vioque, eds., {Springer, The
  Netherlands}, 2007, pp.~117--154.

\bibitem{KraRie1}
{\sc B.~Krauskopf and T.~{Rie{\ss}}}, {\em A {L}in's method approach to finding
  and continuing heteroclinic connections involving periodic orbits},
  Nonlinearity, 21 (2008), pp.~1655--1690.

\bibitem{Kuz2}
{\sc Y.~A. Kuznetsov, O.~Feo, and S.~Rinaldi}, {\em Belyakov homoclinic
  bifurcations in a tritrophic food chain model}, SIAM J. Appl. Dyn. Syst., 62
  (2001), p.~462–487.

\bibitem{Lina1}
{\sc D.~Linaro, A.~Champneys, M.~Desroches, and M.~Storace}, {\em
  Codimension-two homoclinic bifurcations underlying spike adding in the
  {Hindmarsh-Rose} burster}, SIAM J. Appl. Dyn. Syst., 11 (2012), pp.~939--962.

\bibitem{Lohr1}
{\sc A.~Lohse and A.~Rodrigues}, {\em Boundary crisis for degenerate singular
  cycles}, Nonlinearity, 30 (2017), pp.~2211--2245.

\bibitem{Mujica1}
{\sc J.~Mujica, B.~Krauskopf, and H.~M. Osinga}, {\em Tangencies between global
  invariant manifolds and slow manifolds near a singular hopf bifurcation},
  SIAM J. Appl. Dyn. Syst., 17 (2018), pp.~1395--1431.

\bibitem{Nau1}
{\sc V.~Naudot}, {\em Strange attractor in the unfolding of an inclination-flip
  homoclinic orbit}, Ergodic Theory Dynam. Systems, 16 (1996), pp.~1071--1086.

\bibitem{Nau2}
\leavevmode\vrule height 2pt depth -1.6pt width 23pt, {\em A strange attractor
  in the unfolding of an orbit-flip homoclinic orbit}, Dyn. Syst., 17 (2002),
  pp.~45--63.

\bibitem{OldKra1}
{\sc B.~E. Oldeman, B.~Krauskopf, and A.~R. Champneys}, {\em Numerical
  unfoldings of codimension-three resonant homoclinic flip bifurcations},
  Nonlinearity, 14 (2001), pp.~597--621.

\bibitem{Hin1}
{\sc H.~M. Osinga}, {\em Nonorientable manifolds in three-dimensional vector
  fields}, Internat. J. Bifur. Chaos Appl. Sci. Engrg., 13 (2003),
  pp.~553--570.

\bibitem{Hin2}
\leavevmode\vrule height 2pt depth -1.6pt width 23pt, {\em Locus of boundary
  crisis: Expect infinitely many gaps}, Phys. Rev. E, 74 (2006).

\bibitem{Palis1}
{\sc J.~Palis and W.~de~Melo}, {\em Geometric Theory of Dynamical Systems},
  {Springer, New York}, 1982.

\bibitem{Taken1}
{\sc J.~Palis and F.~Takens}, {\em Hyperbolicity and Sensitive Chaotic Dynamics
  at Homoclinic Bifurcations: Fractal Dimensions and Infinitely Many Attractors
  in Dynamics}, Cambridge University Press, 1995.

\bibitem{san4}
{\sc B.~Sandstede}, {\em Verzweigungstheorie Homokliner Verdopplungen}, PhD
  thesis, University of Stuttgart, {Stuttgart, Germany}, 1993.

\bibitem{san1}
\leavevmode\vrule height 2pt depth -1.6pt width 23pt, {\em Constructing
  dynamical systems having homoclinic bifurcation points of codimension two},
  J. Dynam. Differential Equations, 9 (1997), pp.~269--288.

\bibitem{Shil5}
{\sc L.~P. Shilnikov}, {\em A case of the existence of a denumerable set of
  periodic motions}, Sov. Math. Dokl., 6 (1965), pp.~163--166.

\bibitem{Shil2}
\leavevmode\vrule height 2pt depth -1.6pt width 23pt, {\em On the generation of
  a periodic motion from trajectories doubly asymptotic to an equilibrium state
  of saddle type}, Mat. Sb. (N.S.), 77(119) (1968), pp.~461--472.

\bibitem{Leonid1}
{\sc L.~P. Shilnikov, A.~L. Shilnikov, D.~V. Turaev, and L.~O. Chua}, {\em
  Methods of Qualitative Theory in Nonlinear Dynamics (Part \textup{I})},
  vol.~4, World Scientific, Singapore, 1998.

\bibitem{Doe5}
{\sc B.~W., A.~Champneys, E.~Doedel, W.~Govaerts, Y.~Kuznetsov, and
  B.~Sandstede}, {\em Numerical continuation, and computation of normal forms},
  in Handbook of Dynamical Systems, 1999.

\bibitem{Wie1}
{\sc S.~M. Wieczorek}, {\em Global bifurcation analysis in laser systems}, in
  Numerical Continuation Methods for Dynamical Systems: {Path Following and
  Boundary Value Problems}, B.~Krauskopf, H.~M. Osinga, and J.~Gal\'an-Vioque,
  eds., {Springer, The Netherlands}, 2007, pp.~177--220.

\bibitem{Wie2}
{\sc S.~M. Wieczorek and B.~Krauskopf}, {\em Bifurcations of n-homoclinic
  orbits in optically injected lasers}, Nonlinearity, 18 (2005),
  pp.~{1095--1120}.

\bibitem{Wigg1}
{\sc S.~Wiggins}, {\em Introduction to Applied Nonlinear Dynamical Systems and
  Chaos}, {Springer-Verlag, New York}, 2nd~ed., 2003.

\bibitem{Shil1}
{\sc T.~Xing, R.~Barrio, and A.~Shilnikov}, {\em Symbolic quest into homoclinic
  chaos}, Internat. J. Bifur. Chaos Appl. Sci. Engrg., 24 (2014).

\bibitem{Yan1}
{\sc E.~Yanagida}, {\em Branching of double pulse solutions from single pulse
  solutions in nerve axon equations}, J. Differential Equations, 66 (1987),
  pp.~243--262.

\bibitem{Kirk2}
{\sc W.~Zhang, B.~Krauskopf, and V.~Kirk}, {\em How to find a codimension-one
  heteroclinic cycle between two periodic orbits}, Discrete Contin. Dyn. Syst.,
  32 (2012), pp.~2825--2851.

\end{thebibliography}


\end{document}